\documentclass[12pt,a4paper]{article}


\usepackage[english]{babel}
\usepackage[utf8]{inputenc}
\usepackage[T1]{fontenc}

\usepackage[style=numeric]{biblatex}
\bibliography{bibliography}
\renewbibmacro{in:}{}
\usepackage[right=3cm, a4paper]{geometry}
\usepackage{csquotes}
\usepackage{amsmath}
\usepackage{amssymb}	
\usepackage{amsthm}
\usepackage{mathtools}
\usepackage{tikz-cd}
\usepackage{float}
\usepackage{graphicx}

\usepackage[colorinlistoftodos, backgroundcolor=yellow!10, linecolor=red!60, textwidth=2.5cm, textsize=footnotesize]{todonotes}

\usepackage[colorlinks=true, allcolors=blue]{hyperref}
\usepackage{enumitem}
\pdfsuppresswarningpagegroup=1
\usepackage{svg}
\usepackage{scalerel}

\usepackage{multicol}
\setlength{\columnsep}{1cm}

\usepackage{framed}
\usepackage[font=small, labelfont=it, textfont=it]{caption}
\usepackage[font=small]{subcaption}

\newcommand{\fld}{\ensuremath{\Bbbk}}

\newcommand{\Vertices}{\operatorname{Vert}}
\newcommand{\Edges}{\operatorname{Edges}}

\newcommand{\BV}{\Delta}
\newcommand{\cp}{\square}

\newcommand{\hdeg}[1]{{|#1|}}

\newcommand{\ot}{\otimes}

\newcommand{\g}{\ensuremath{\mathfrak{g}}}
\newcommand{\h}{\ensuremath{\mathfrak{h}}}
\newcommand{\q}{\ensuremath{\mathfrak{q}}}

\newcommand{\ad}{\ensuremath{\operatorname{ad}}}
\newcommand{\Ad}{\ensuremath{\operatorname{Ad}}}
\newcommand{\Sym}{\ensuremath{\operatorname{Sym}}}
\newcommand{\otr}{\ensuremath{\operatorname{otr}}}
\newcommand{\odet}{\ensuremath{\operatorname{odet}}}
\newcommand{\tr}{\ensuremath{\operatorname{Tr}}}

\newcommand{\Hom}{{\ensuremath{\mathrm{Hom}}}}

\newcommand{\hol}{{\ensuremath{\mathrm{hol}}}}

\newcommand{\egm}{\Phi^\textnormal{even}}
\newcommand{\ogm}{\Phi^\textnormal{odd}}
\newcommand{\egmex}{\tilde{\Phi}^\textnormal{even}}
\newcommand{\ogmex}{\tilde{\Phi}^\textnormal{odd}}

\newcommand{\Goldman}[1]{\mathcal G_{#1}}
\newcommand{\Goldmanred}[1]{\mathcal G^\mathrm{red}_{#1}}

\newcommand{\ModSpace}[2]{\mathcal{M}_{#1}(#2)}

\newcommand{\he}{\mathrm{he}}

\newtheorem{theorem}{Theorem}

\newtheorem{proposition}{Proposition}
\newtheorem{corollary}{Corollary}
\newtheorem{conjecture}{Conjecture}

\newtheorem{definition}{Definition}
\newtheorem{remark}{Remark}
\newtheorem{example}{Example}

\begin{document}

\title{Batalin-Vilkovisky structures on moduli spaces of flat connections}
\date{}
\author{Anton Alekseev\thanks{Section of Mathematics, University of Geneva, Rue du Conseil-Général 7-9, 1205 Geneva, Switzerland, \texttt{anton.alekseev@unige.ch}}, Florian Naef\thanks{School of Mathematics, Trinity College, Dublin 2, Ireland, \texttt{naeff@tcd.ie}}, Ján Pulmann\thanks{School of Mathematics, University of Edinburgh, Peter Guthrie Tait Road, Edinburgh, U.K., \texttt{Jan.Pulmann@ed.ac.uk}}, Pavol \v{S}evera\thanks{Section of Mathematics, University of Geneva, Rue du Conseil-Général 7-9, 1205 Geneva, Switzerland, \texttt{pavol.severa@gmail.com}}}

\maketitle
\abstract{
Let $\Sigma$ be a compact oriented 2-manifold (possibly with boundary), and let $\Goldman{\Sigma}$ be the linear span of free homotopy classes of closed oriented curves on $\Sigma$ equipped with the Goldman Lie bracket $[\cdot, \cdot]_\text{Goldman}$ defined in terms of intersections of curves. A theorem of Goldman gives rise to a Lie homomorphism $\Phi^{\rm even}$ from
$(\Goldman{\Sigma}, [\cdot, \cdot]_\text{Goldman})$ to functions on the moduli space of flat connections $\ModSpace{\Sigma}{G}$ for $G=U(N),GL (N)$, equipped with the Atiyah-Bott Poisson bracket. 

The space $\Goldman{\Sigma}$ also carries the Turaev Lie cobracket $\delta_\text{Turaev}$ defined in terms of self-intersections of curves. 
In this paper, we address the following natural question: which geometric structure on moduli spaces of flat connections corresponds to the Turaev cobracket?

We give a constructive answer to this question in the following context: 
for $G$ a Lie supergroup with an odd invariant scalar product on its Lie 
superalgebra, and for nonempty $\partial\Sigma$, we show that the moduli space of flat connections $\ModSpace{\Sigma}{G}$ 
carries a natural Batalin-Vilkovisky (BV) structure, given by an explicit 
combinatorial Fock-Rosly formula. Furthermore, for the queer Lie supergroup 
$G=Q(N)$, we define a BV-morphism $\ogm\colon \wedge \Goldman{\Sigma} \to {\rm Fun}(\ModSpace{\Sigma}{Q(N)})$ 
which replaces the Goldman map, and which captures the information both on the 
Goldman bracket and on the Turaev cobracket. The map $\ogm$ is constructed
using the ``odd trace'' function on $Q(N)$.
}

\section{Introduction}

Let $G$ be a connected Lie group, $\g$ its Lie algebra, and $\Sigma$ a connected oriented 2-dimensional manifold (possibly with boundary). We denote by
$$
\ModSpace{\Sigma}{G}=\mathrm{Hom}(\pi_1(\Sigma), G)/G
$$
the moduli space of flat connections on $\Sigma$. Usually, the action of $G$ on the representation space $\mathrm{Hom}(\pi_1(\Sigma), G)$ is not free, and it doesn't need to be proper. For this reason, the space $\ModSpace{\Sigma}{G}$ is often singular, and there are various approaches to dealing with it. For the purpose of this introduction, we will be considering the smooth part of the moduli space.

Assume that $\g$ admits an invariant scalar product. Then, by the Atiyah-Bott Theorem \cite{AtiyahBott}, $\ModSpace{\Sigma}{G}$ carries a canonical Poisson structure $\{ \cdot, \cdot\}_\text{Atiyah-Bott}$. If $\Sigma$ is closed, this Poisson structure is non-degenerate, and $\ModSpace{\Sigma}{G}$ becomes a symplectic space.

In \cite{Goldman}, Goldman discovered an extraodinary relation between the Atiyah-Bott Poisson structure on $\ModSpace{\Sigma}{G}$ and the topology of oriented closed curves on $\Sigma$. 
In more detail, let $\gamma_1, \gamma_2$ be two such curves on $\Sigma$ which intersect transversally at a finite number of points. Denote their free homotopy classes by $|\gamma_1|, |\gamma_2|$, and define the Goldman bracket by the formula
\begin{equation}
    [|\gamma_1|, |\gamma_2|]_\text{Goldman} = \sum_{p \in \gamma_1 \cap \gamma_2} \epsilon_p 
\, |\gamma_1 \cdot_p \gamma_2|.
\end{equation}
Here  $\epsilon_p$ is the sign of the intersection (with respect to the orientation of $\Sigma$) and $\gamma_1 \cdot_p \gamma_2$ is the oriented connected sum of $\gamma_1$ and $\gamma_2$ at $p$.
\begin{theorem}[Goldman \cite{Goldman}]  \label{intro:Goldman}
The bracket $[\cdot , \cdot]_\textnormal{Goldman}$ is well defined, and it gives rise to a Lie bracket on the linear span $\Goldman{\Sigma}$ of free homotopy classes of oriented closed curves on $\Sigma$.
\end{theorem}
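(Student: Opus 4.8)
The plan is to reduce everything to curves in general position using transversality, and then to verify the three substantive properties of a Lie bracket: well-definedness (independence of the chosen representatives), antisymmetry, and the Jacobi identity. Since the bracket is defined by extending bilinearly from free homotopy classes, bilinearity is automatic, so the content lies entirely in these three points. Throughout I would use that, by a small generic perturbation, any free homotopy classes admit smooth representatives that are mutually transverse, meeting in finitely many double points by compactness of $\Sigma$.

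For well-definedness I would fix free homotopy classes $|\gamma_1|$ and $|\gamma_2|$ and show that the right-hand side is independent of the transverse representatives chosen. Any two such pairs are joined by a homotopy which, after a generic perturbation, decomposes into ambient isotopies — under which intersection points merely move and nothing changes — together with finitely many codimension-one tangency events. A tangency between $\gamma_1$ and $\gamma_2$ is the birth or death of a pair of intersection points $p_+, p_-$ bounding a small bigon. The essential observation is that these two points carry opposite signs, $\epsilon_{p_+} = -\epsilon_{p_-}$, while the oriented connected sums $\gamma_1 \cdot_{p_+} \gamma_2$ and $\gamma_1 \cdot_{p_-} \gamma_2$ differ by sweeping across the bigon and are therefore freely homotopic. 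Hence $\epsilon_{p_+}|\gamma_1 \cdot_{p_+}\gamma_2| + \epsilon_{p_-}|\gamma_1\cdot_{p_-}\gamma_2| = 0$, and the remaining generic events (triple points involving a self-crossing of one curve) change the connected sums only by a homotopy, so the total sum is unaffected.

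Antisymmetry is then immediate: at each intersection point $p$ the sign $\epsilon_p$ flips to $-\epsilon_p$ when the ordered pair $(\gamma_1, \gamma_2)$ is replaced by $(\gamma_2, \gamma_1)$, since the ordered tangent basis at $p$ is reversed, whereas $\gamma_1 \cdot_p \gamma_2$ and $\gamma_2 \cdot_p \gamma_1$ traverse the same loops in conjugate order and hence define the same free homotopy class. Summing over $p$ yields $[|\gamma_2|,|\gamma_1|] = -[|\gamma_1|,|\gamma_2|]$.

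The main obstacle is the Jacobi identity, and this is where the real work lies. I would choose representatives $\gamma_1, \gamma_2, \gamma_3$ in general position, so that all intersections are transverse double points and no triple point occurs. Expanding $[[|\gamma_1|,|\gamma_2|],|\gamma_3|]$ and its two cyclic permutations, and using that the connected sum at a point $p$ reroutes a curve only in a small neighbourhood of $p$ — so it does not alter the intersections with the third curve away from $p$ — one rewrites the full cyclic sum as a sum indexed by unordered pairs of intersection points chosen among the three pairs of curves. The crux is that each such configuration of two intersection points contributes to exactly two of the three cyclic terms, and that these two contributions carry opposite signs while producing freely homotopic iterated connected sums, so that they cancel. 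Establishing this requires careful local orientation bookkeeping at the two intersection points together with the homotopy invariance of the doubly-resolved curve; organising this cancellation while correctly tracking every sign and ordering convention is the delicate heart of the proof.
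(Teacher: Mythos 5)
The paper never proves this theorem: it is stated as background and attributed to Goldman \cite{Goldman} (both in the introduction and again in Section 5), so there is no internal proof to compare your attempt against. Judged against the standard topological argument in the literature, your outline is correct and is essentially that argument. For well-definedness, a generic homotopy indeed decomposes into isotopies, monogon/bigon births and deaths, and triple-point moves; only bigons between the two curves and triple points can touch the bracket, and your bigon cancellation (opposite signs at the two corners, freely homotopic resolutions) together with the observation that triple-point moves induce a sign-preserving bijection of intersection points with homotopic resolutions handles both. Antisymmetry is as you say: $\gamma_1\cdot_p\gamma_2$ and $\gamma_2\cdot_p\gamma_1$ are conjugate based loops, hence the same free homotopy class, while $\epsilon_p$ flips. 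For Jacobi, your indexing and cancellation mechanism are the right ones, and the bookkeeping you defer is short: a pair $(p,q)$ with $p\in\gamma_1\cap\gamma_2$, $q\in\gamma_1\cap\gamma_3$ contributes $\epsilon_p\epsilon_q\,\bigl|(\gamma_1\cdot_p\gamma_2)\cdot_q\gamma_3\bigr|$ to $[[|\gamma_1|,|\gamma_2|],|\gamma_3|]$ and $-\epsilon_p\epsilon_q\,\bigl|(\gamma_3\cdot_q\gamma_1)\cdot_p\gamma_2\bigr|$ to $[[|\gamma_3|,|\gamma_1|],|\gamma_2|]$, where the relative sign comes from reversing the order of the curves at $q$, and where one uses that resolving at $p$ does not change the local picture (hence the sign) at $q$; the two doubly-resolved loops are the same cyclic word in the four arcs involved, hence freely homotopic, so the contributions cancel. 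You stop short of carrying out exactly this computation, calling it \emph{the delicate heart}; since the mechanism you state is correct and its verification is the local calculation just sketched, what you have is a correct proof outline with one routine verification left open rather than a gap in the approach.
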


Furthermore, let $G=U(n)$ or $GL(n)$,  and choose $\gamma \in \pi_1(\Sigma)$. The Goldman function on $\ModSpace{\Sigma}{G}$, associated to $\gamma$, is defined by
$$
\egm_{|\gamma|}([\rho]) = \tr \, \rho(\gamma),
$$
where $\rho \in {\rm Hom}(\pi_1(\Sigma), G)$ is a lift of $[\rho] \in \ModSpace{\Sigma}{G}$. The function $\egm_{|\gamma|}$ is well defined, and it depends only on the free homotopy class of $\gamma$. The following theorem establishes a relation between Goldman brackets and Atiyah-Bott Poisson structures:
\begin{theorem}[Goldman \cite{Goldman}]     \label{intro:AB=Goldman}
$$
\{ \egm_{|\gamma_1|}, \egm_{|\gamma_2|}\}_\textnormal{Atiyah-Bott} = \egm_{[|\gamma_1|, |\gamma_2|]_\textnormal{Goldman}}.
$$
\end{theorem}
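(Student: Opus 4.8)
The plan is to use the gauge-theoretic model of the Atiyah-Bott structure. I realize $\ModSpace{\Sigma}{G}$ as a reduction of the affine space $\mathcal A$ of connections, equipped with the weak symplectic form $\omega(a,b)=\int_\Sigma\langle a\wedge b\rangle$, where $\langle\cdot,\cdot\rangle=\tr(\cdot\,\cdot)$ is the invariant scalar product on $\g=\mathfrak{gl}(n)$. The first step is to differentiate the trace-of-holonomy function. Writing $U(t)$ for the holonomy of $A$ from the basepoint of $\gamma$ to $\gamma(t)$, the standard variation-of-holonomy formula together with cyclicity of the trace yields, for a tangent vector $b\in\Omega^1(\Sigma,\g)$,
$$d\egm_{|\gamma|}(b)=-\int_0^1\tr\bigl(\hol_{\gamma,t}\,b(\dot\gamma(t))\bigr)\,dt,$$
where $\hol_{\gamma,t}=U(t)\,\hol_\gamma\,U(t)^{-1}$ is the holonomy of $\gamma$ based at the point $\gamma(t)$.

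Since this differential is a line integral along $\gamma$, the associated Hamiltonian vector field $X_{\egm_{|\gamma|}}$ is a distributional $\g$-valued $1$-form supported on $\gamma$: a delta-current whose value at $\gamma(t)$ is the matrix $\hol_{\gamma,t}$ viewed in $\g$ through the trace pairing. The bracket is then $\{\egm_{|\gamma_1|},\egm_{|\gamma_2|}\}=d\egm_{|\gamma_1|}\bigl(X_{\egm_{|\gamma_2|}}\bigr)$, and because $X_{\egm_{|\gamma_2|}}$ is supported on $\gamma_2$ the integral localizes to the transverse intersection points $p\in\gamma_1\cap\gamma_2$. At each $p$ the wedge of the two tangent directions $\dot\gamma_1,\dot\gamma_2$ contributes exactly the intersection sign $\epsilon_p$, while the algebraic factor, written through the split Casimir $\sum_a e_a\ot e^a$ of the scalar product, is $\sum_a\tr(\hol_{\gamma_1,p}\,e_a)\,\tr(\hol_{\gamma_2,p}\,e^a)$. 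The fusion identity $\sum_a\tr(Ae_a)\,\tr(Be^a)=\tr(AB)$ collapses this to $\tr(\hol_{\gamma_1,p}\,\hol_{\gamma_2,p})=\egm_{|\gamma_1\cdot_p\gamma_2|}$, the trace of the holonomy of the oriented connected sum at $p$. Summing over $p$ recovers $\sum_p\epsilon_p\,\egm_{|\gamma_1\cdot_p\gamma_2|}=\egm_{[|\gamma_1|,|\gamma_2|]_\text{Goldman}}$.

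The step I expect to be the main obstacle is making the middle argument rigorous: the Hamiltonian vector field exists only as a distribution, $\omega$ is merely a weak symplectic form, and the naive pairing of two delta-currents supported on curves is ill-defined precisely where the curves meet. I would handle this either by regularizing the currents and checking that the contribution at each transverse crossing is insensitive to the smoothing, or — more cleanly — by descending to the finite-dimensional moduli space and reinterpreting the entire computation as the cup-product pairing on $H^1(\pi_1(\Sigma),\g)$; there the intersection signs $\epsilon_p$ come from the topological description of the cup product on an oriented surface and the fusion identity is unchanged. Transversality of $\gamma_1,\gamma_2$ and a consistent orientation convention are what guarantee the clean pointwise contributions, and gauge-invariance of the trace is what makes $\egm_{|\gamma|}$ descend to a well-defined function on $\ModSpace{\Sigma}{G}$.
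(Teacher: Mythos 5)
Your proposal is correct in substance, and it is essentially Goldman's original gauge-theoretic argument --- but this is a genuinely different route from the one the paper takes. The paper cites Goldman for Theorem \ref{intro:AB=Goldman} itself; the closest it comes to a proof is in Section \ref{ssec:Goldman}, where the (extended) statement is proved entirely inside the finite-dimensional Fock--Rosly model: one chooses a skeleton to identify $\ModSpace{\Sigma,V}{G}\cong G^{\times N}$, defines the quasi-Poisson bivector combinatorially by \eqref{eq:FRbivector}, and then computes brackets of $\Ad$-invariant functions of holonomies through Goldman's \emph{product formula} --- a sum over transverse intersections with signs, exactly parallel to your localization step --- followed by the same elementary-matrix computation $\sum_{\alpha,\beta}\tr\bigl(A\,E_{(\alpha\beta)}\bigr)\tr\bigl(B\,E_{(\beta\alpha)}\bigr)=\tr(AB)$ that you call the fusion identity. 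Your route instead works upstairs with the weak symplectic form $\omega(a,b)=\int_\Sigma\tr(a\wedge b)$ on the space of connections, where the Hamiltonian vector fields of $\egm_{|\gamma|}$ are delta-currents along the curves and the bracket localizes at crossings. What your approach buys is direct contact with the Atiyah--Bott bracket as literally stated in the theorem; its cost is precisely the analytic gap you flag yourself --- the pairing of two delta-currents supported on intersecting curves is ill-defined a priori, and closing the gap requires either a regularization argument or the descent to the cup product on $H^1(\pi_1(\Sigma),\g)$ with Poincar\'e duality, which is in fact how Goldman's published proof proceeds, so your ``cleaner'' alternative is the historically rigorous one. What the paper's combinatorial route buys is that no distributional analysis ever appears (everything is finite-dimensional from the start), and --- crucially for this paper --- it is the formulation that generalizes to the odd setting of Theorem \ref{thm:qBVFR}; what it gives up is the direct link to the Atiyah--Bott form, which the paper restores only by quoting the literature (Fock--Rosly, Massuyeau--Turaev, Nie, Li-Bland--\v{S}evera) for the fact that $\pi_{\mathrm{FR}}$ descends to $\{\cdot,\cdot\}_{\textnormal{Atiyah-Bott}}$ on $\ModSpace{\Sigma}{G}$.
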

Theorem \ref{intro:AB=Goldman} can be restated in several ways. First, it says that the map
$$
\egm\colon (\Goldman{\Sigma}, [ \cdot, \cdot]_\text{Goldman}) \to ({\rm Fun}(\ModSpace{\Sigma}{G}),
\{ \cdot, \cdot\}_\textnormal{Atiyah-Bott})
$$
is a Lie algebra homomorphism. Equivalently, the symmetric algebra $S\Goldman{\Sigma}$ is naturally a Poisson algebra with {$[\cdot, \cdot]_\text{Goldman}$ extended to a Poisson bracket}, and the map $\egm$ extends to a homomorphism of Poisson algebras
$$
\egm\colon S\Goldman{\Sigma} \to {\rm Fun}(\ModSpace{\Sigma}{G}).
$$
If one considers polynomial functions on the moduli space, this map is actually surjective for all $n$, see \cite{Procesi1976} (recall that  $G=U(n)$ or $GL(n)$).

Note that the class of a trivial loop $\bigcirc \in \Goldman{\Sigma}$ belongs to the center of 
the Lie algebra $\Goldman{\Sigma}$, and that the Goldman bracket descends to the quotient space
$$
\Goldmanred{\Sigma}=\Goldman{\Sigma}/\mathbb{R}\bigcirc.
$$
In \cite{Turaev}, Turaev showed that the space $\Goldman{\Sigma}$ carries a natural Lie cobracket
$$
\delta_{\rm Turaev}(|\gamma|) = \sum_{p \in \gamma \cap \gamma} \epsilon_p \, |\gamma'_p| \wedge |\gamma''_p|.
$$
Here we assume that the curve $\gamma$ has a finite number of transverse self-intersections which are denoted by $p \in \gamma \cap \gamma$. By resolving the oriented intersection at the point $p$, we obtain two closed oriented curves $\gamma'_p$ and $\gamma''_p$. Similar to the definition of the Goldman bracket, $\epsilon_p$ is the sign of the intersection with respect to the orientation of $\Sigma$.
\begin{theorem}[Turaev \cite{Turaev}, Chas \cite{Chas2004}]
The triple $(\Goldmanred{\Sigma}, [\cdot, \cdot]_{\rm Goldman}, \delta_{\rm Turaev})$ is an involutive Lie bialgebra. That is, $\delta_{\rm Turaev}$ is well defined, it is a Lie cobracket and a Lie algebra 1-cocycle with respect to the Goldman bracket, and the composition map
$
[\cdot, \cdot]_{\rm Goldman} \circ \delta_{\rm Turaev}
$
vanishes.
\end{theorem}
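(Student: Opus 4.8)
The plan is to reconstruct the topological proof of Turaev and Chas by reducing the whole statement to a finite list of local combinatorial verifications on generic representative curves, exploiting that any two generic representatives of a given free homotopy class differ by a sequence of Reidemeister moves (RI, RII, RIII) and ambient isotopy. Throughout, ``generic'' means a closed curve, or a disjoint union of such, immersed with only transverse double points. Four things must be checked: (i) $\delta_{\rm Turaev}$ is well defined on $\Goldmanred{\Sigma}$; (ii) it is co-antisymmetric and satisfies co-Jacobi; (iii) it is a $1$-cocycle for the Goldman bracket; and (iv) $[\cdot,\cdot]_{\rm Goldman}\circ\delta_{\rm Turaev}=0$. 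Since the trivial loop $\bigcirc$ is central for the Goldman bracket, the bracket descends to $\Goldmanred{\Sigma}$ and acts on $\wedge^2\Goldmanred{\Sigma}$ by the Leibniz rule $[a,x\wedge y]=[a,x]\wedge y+x\wedge[a,y]$, which is the action implicit in (iii) and (iv).

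First, well-definedness. I would begin by checking that each local summand $\epsilon_p\,|\gamma'_p|\wedge|\gamma''_p|$ is itself unambiguous: resolving the double point $p$ splits $\gamma$ into an unordered pair of loops, and interchanging the two branches simultaneously reverses the sign $\epsilon_p$ (read off the ordered frame of the two tangent directions) and transposes the wedge factors, so the product is invariant. This simultaneously yields co-antisymmetry, since by construction $\delta_{\rm Turaev}(|\gamma|)\in\wedge^2\Goldmanred{\Sigma}$. Invariance under the three Reidemeister moves is then verified case by case: an RI move creates or destroys a single self-intersection whose resolution produces $\gamma$ together with a contractible loop, so the corresponding term is a multiple of $|\gamma|\wedge|\bigcirc|$ and vanishes in $\Goldmanred{\Sigma}=\Goldman{\Sigma}/\mathbb{R}\bigcirc$ --- this is precisely why one must pass to the reduced space. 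An RII move creates or destroys two double points of opposite sign whose resolutions yield the same unordered pair of homotopy classes, so the two contributions cancel. An RIII move preserves the number of double points and changes the resolved loops only by a homotopy, so each summand is unchanged.

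For co-Jacobi and the cocycle condition I expect the genuine combinatorial work. Co-Jacobi amounts to computing $(\mathrm{id}\otimes\delta_{\rm Turaev})\circ\delta_{\rm Turaev}$, equivalently its cyclic symmetrization, by choosing an ordered pair of distinct self-intersections of $\gamma$ and resolving both; one then checks that summing over the three cyclic permutations cancels all terms, the cancellation being organized by the local picture at the two chosen double points. The cocycle condition $\delta_{\rm Turaev}([a,b]_{\rm Goldman})=[\delta_{\rm Turaev}(a),b]_{\rm Goldman}+[a,\delta_{\rm Turaev}(b)]_{\rm Goldman}$ is the most delicate: taking $a=|\gamma_1|$ and $b=|\gamma_2|$, one analyzes the self-intersections of the merged curve $\gamma_1\cdot_p\gamma_2$. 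These fall into three families --- self-intersections internal to $\gamma_1$, internal to $\gamma_2$, and crossings $q\neq p$ between $\gamma_1$ and $\gamma_2$; the first two reproduce $[\delta_{\rm Turaev}(a),b]$ and $[a,\delta_{\rm Turaev}(b)]$, while the mixed crossings must be shown to recombine correctly. Bookkeeping the local reconnection patterns and signs here is where I expect the main obstacle to lie.

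Finally, involutivity. I would write $[\cdot,\cdot]_{\rm Goldman}\circ\delta_{\rm Turaev}(|\gamma|)=\sum_p\epsilon_p\,[\,|\gamma'_p|,|\gamma''_p|\,]_{\rm Goldman}$ and expand each Goldman bracket as a sum over intersection points $q$ of $\gamma'_p$ with $\gamma''_p$; after the oriented smoothing at $p$ the two loops no longer meet at $p$, so such $q$ are exactly the self-intersections $q\neq p$ of $\gamma$ with one branch in each resolved loop. The key is a sign-reversing involution on the set of unordered pairs $\{p,q\}$ of double points whose four preimages alternate along $\gamma$: for such a pair, resolving at $p$ and reconnecting at $q$ yields the same free homotopy class as resolving at $q$ and reconnecting at $p$, but with opposite sign, so the contributions cancel; pairs whose preimages do not alternate contribute nothing, since then $q$ stays within a single loop of the $p$-resolution. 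Hence the composite vanishes in $\Goldmanred{\Sigma}$. An alternative to the purely pictorial route throughout would be to pass to the algebraic model of based loops and the homotopy intersection form, but the Reidemeister-move verification is the most self-contained.
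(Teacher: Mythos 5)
A preliminary remark on the comparison itself: the paper does not prove this theorem. It is quoted as background, with the bialgebra structure attributed to \cite{Turaev} and involutivity to \cite{Chas2004}, and then consumed as an input (via the construction of the BV operator on $\wedge\Goldmanred{\Sigma}$ in Proposition \ref{prop:CEBVcomplex}) to Theorem \ref{thm:OddGoldman}. So the only meaningful comparison is with the cited proofs, and your plan is indeed the standard one: work with generic representatives and reduce everything to local moves and local cancellations. Two of your four items are in good shape. The well-definedness argument is correct, including the key point that an RI move contributes a term proportional to $|\gamma|\wedge\bigcirc$, which is exactly why one must pass to $\Goldmanred{\Sigma}$, while RII contributions cancel in pairs and RIII changes nothing. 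Your involutivity argument is also correct and essentially complete: only ``alternating'' pairs $\{p,q\}$ of self-intersections contribute; smoothing at $p$ and merging at $q$ produces a cyclic word (in the four arcs of $\gamma$ cut by $p$ and $q$) that is a cyclic rotation of the word produced by smoothing at $q$ and merging at $p$, so the free homotopy classes agree; and the two sign factors agree at one of the two points but are transposed at the other (the roles of the two tangent branches there get swapped), so the products of signs are opposite and the contributions cancel. This is precisely Chas's mechanism.

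The genuine gap is that the two statements constituting the real content of Turaev's theorem --- co-Jacobi and the $1$-cocycle identity --- are only set up, not verified; you explicitly defer them (``where I expect the main obstacle to lie''). This is not a presentational omission, because these are exactly the places where a sign error would sink the statement, and your description of the cocycle identity is in one respect misleading: you say the mixed crossings ``must be shown to recombine correctly,'' leaving open whether they match terms on the right-hand side or cancel. They cancel among themselves: the terms of $\delta_{\rm Turaev}([|\gamma_1|,|\gamma_2|])$ indexed by a merge at $p\in\gamma_1\cap\gamma_2$ followed by a split at a different crossing $q\in\gamma_1\cap\gamma_2$ have no counterpart in $[\delta_{\rm Turaev}(|\gamma_1|),|\gamma_2|]+[|\gamma_1|,\delta_{\rm Turaev}(|\gamma_2|)]$, and they vanish pairwise under the swap $(p,q)\leftrightarrow(q,p)$ by the same cyclic-word-plus-transposed-sign argument you gave for involutivity; the remaining terms (merge at $p$, split at a self-intersection of $\gamma_1$ or of $\gamma_2$) then match the right-hand side bijectively with signs. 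Co-Jacobi requires the analogous pairwise cancellation across the cyclic symmetrization of $(\mathrm{id}\otimes\delta_{\rm Turaev})\circ\delta_{\rm Turaev}$, organized by which loop of the first resolution contains the second double point. With these two verifications written out --- and they are routine once the involution is identified, since the sign bookkeeping is identical to the involutivity case --- your proof would be complete and would coincide with the arguments of \cite{Turaev} and \cite{Chas2004}.
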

In this paper, we address the following natural question: 

\vskip 0.2cm
{\bf Question}: what is the geometric structure on the moduli space of flat connections which corresponds to the Turaev cobracket?

\vskip 0.2cm

We start with the following standard wisdom, see e.g. \cite[Sec.~5]{CMW2016}:
\begin{theorem}\label{thm:CEintro}
Let $(\mathcal{G}, [\cdot, \cdot], \delta)$ be an involutive Lie bialgebra. Then, the exterior algebra $\wedge \mathcal{G}$ is naturally a Batalin-Vilkovisky (BV) algebra. That is, it carries a unique second order differential operator $\Delta \equiv \Delta^{[\cdot, \cdot], \delta}$ such that 
$$
\Delta^2=0, \hskip 0.3cm \Delta(1)=0, \hskip 0.3cm \Delta(x)=\delta(x),
\hskip 0.3cm
\Delta(x\wedge y)=\Delta(x) \wedge y - x\wedge \Delta(y) + [x,y]
$$
for all $x,y \in \mathcal{G}$.
\end{theorem}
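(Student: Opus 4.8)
The plan is to split the desired operator into the piece coming from the cobracket $\delta$ and the piece coming from the bracket $[\cdot,\cdot]$, to observe that the four stated conditions determine $\Delta$ uniquely because it is second order, and then to reduce the identity $\Delta^2=0$ to the defining axioms of an involutive Lie bialgebra by separating components of definite exterior degree. Throughout I regard $\wedge\mathcal{G}$ as a graded-commutative algebra with $\mathcal{G}$ placed in \emph{odd} degree, so that the wedge product is the graded-commutative product; this is consistent with the stated product rule, whose middle sign $-x\wedge\Delta(y)$ is exactly $(-1)^{|x|}$ with $|x|$ odd.

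\emph{Uniqueness.} I would first recall that a differential operator of order $\le 2$ annihilating the unit is completely determined by its restriction to $\wedge^{\le 2}\mathcal{G}$: the deviation from being a derivation, $\Phi(a,b)=\Delta(a\wedge b)-\Delta(a)\wedge b-(-1)^{|a|}a\wedge\Delta(b)$, is a graded biderivation precisely because $\Delta$ is second order, hence determined by its values on generators, and together with the first-order part this fixes $\Delta$ on all of $\wedge\mathcal{G}$. The conditions $\Delta(1)=0$, $\Delta|_{\mathcal{G}}=\delta$, and $\Phi(x,y)=[x,y]$ for $x,y\in\mathcal{G}$ (a restatement of the product rule) prescribe exactly this restriction, which gives uniqueness.

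\emph{Existence.} I would then construct $\Delta=\Delta_\delta+\Delta_{[\cdot,\cdot]}$. Here $\Delta_\delta$ is the unique odd derivation of $\wedge\mathcal{G}$ extending $\delta\colon\mathcal{G}\to\wedge^2\mathcal{G}$; it raises exterior degree by one, acting on a monomial by $\sum_i \pm\, x_1\wedge\cdots\wedge\delta(x_i)\wedge\cdots\wedge x_k$. The operator $\Delta_{[\cdot,\cdot]}$ is the Chevalley--Eilenberg-type Laplacian of the bracket, lowering exterior degree by one, given by $\sum_{i<j}\pm\,[x_i,x_j]\wedge x_1\wedge\cdots\widehat{x_i}\cdots\widehat{x_j}\cdots\wedge x_k$; it is a genuine second-order operator with $\Delta_{[\cdot,\cdot]}|_{\wedge^{\le1}}=0$ and $\Delta_{[\cdot,\cdot]}(x\wedge y)=[x,y]$. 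The sum is then of order $\le 2$ and meets the four conditions by construction. Since $\Delta_\delta$ raises and $\Delta_{[\cdot,\cdot]}$ lowers exterior degree by one, $\Delta^2$ splits into three homogeneous pieces that must vanish separately:
\[\Delta^2=\Delta_\delta^2\;+\;\bigl(\Delta_\delta\Delta_{[\cdot,\cdot]}+\Delta_{[\cdot,\cdot]}\Delta_\delta\bigr)\;+\;\Delta_{[\cdot,\cdot]}^2.\]
The outer terms follow from the classical dictionary: $\Delta_\delta^2$ is an even derivation vanishing on generators iff the co-Jacobi identity holds, and $\Delta_{[\cdot,\cdot]}^2=0$ is the statement that the Jacobi identity turns the Lie-homology differential into a differential.

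The heart of the argument, and the main obstacle, is the cross term $[\Delta_\delta,\Delta_{[\cdot,\cdot]}]$. It preserves exterior degree, is again of order $\le 2$, and annihilates $1$, so by the same reduction as in the uniqueness step it suffices to check that it vanishes on $\wedge^1$ and $\wedge^2$. On a generator $x$ one finds $\Delta_{[\cdot,\cdot]}(\delta x)=[\cdot,\cdot]\circ\delta\,(x)$, which is zero precisely by \emph{involutivity}; on $x\wedge y$, after expanding and using involutivity once more to kill $\Delta_{[\cdot,\cdot]}(\delta x)$, one recovers the combination $\delta[x,y]-[\delta x,y]-[x,\delta y]$, which vanishes by the \emph{$1$-cocycle (compatibility) condition}. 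This is exactly where the two ``extra'' axioms of an \emph{involutive} Lie bialgebra enter, and where the graded Koszul signs must be tracked with care so that the expansion on $\wedge^2$ assembles into the cocycle identity rather than some shifted variant; the reduction to $\wedge^{\le2}$ afforded by the second-order property is what keeps this sign bookkeeping finite. With all three pieces shown to vanish, $\Delta^2=0$ and the proof is complete.
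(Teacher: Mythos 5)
Your proof is correct, but the paper itself contains no argument to compare it with: it treats this statement as standard, citing \cite[Sec.~5]{CMW2016} both in the introduction and in Proposition~\ref{prop:CEBVcomplex}, where it only records the explicit formula for $\Delta^{[\cdot,\cdot],\delta}$ on monomials without proof. Your decomposition $\Delta=\Delta_\delta+\Delta_{[\cdot,\cdot]}$ matches that formula exactly: the degree-lowering sum $\sum_{i<j}(-1)^{i+j+1}[x_i,x_j]\,x_1\cdots\hat{x}_i\cdots\hat{x}_j\cdots x_n$ is your $\Delta_{[\cdot,\cdot]}$, and the degree-raising sum $\sum_i(-1)^{i-1}x_1\cdots\delta(x_i)\cdots x_n$ is your $\Delta_\delta$, so your existence step reconstructs precisely the operator the paper quotes. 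What you supply beyond the citation are the two points a reader actually needs: the uniqueness argument (an operator of order $\le 2$ annihilating $1$ is fixed by its restriction to $\wedge^{\le 2}\mathcal{G}$, because its Leibniz deviation is a biderivation and $\wedge\mathcal{G}$ is generated by $\mathcal{G}$), and the splitting of $\Delta^2$ into three pieces of homogeneous exterior degree, with the crucial cross term $[\Delta_\delta,\Delta_{[\cdot,\cdot]}]$ handled by the same order-$\le 2$ reduction — its vanishing on $\wedge^1$ is involutivity, and on $\wedge^2$ it is the $1$-cocycle identity, with involutivity invoked once more to kill the $([\cdot,\cdot]\circ\delta)(x)\wedge y$ terms. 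This is the standard argument underlying the cited reference, correctly organized; the only things you leave unverified are the Koszul-sign bookkeeping (which you flag yourself) and the classical facts $\Delta_{[\cdot,\cdot]}^2=0$ (Jacobi) and $\Delta_\delta^2=0$ (co-Jacobi), all of which are routine.
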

Our first result provides a refinement of the Atiyah-Bott Poisson structures in the case when a Lie group  is replaced by a Lie supergroup, and an invariant scalar product on the Lie algebra is replaced by an odd invariant scalar product on the Lie superalgebra.  In contrast to the even case, it is in general necessary to consider foliated surfaces, i.e. surfaces with a decomposition into 1-dimensional submanifolds, as on Figure \ref{fig:folintro}.

\begin{figure}[h]
    \centering
    \includesvg{images/folintro.svg}
    \caption{Foliated annulus and once-punctured torus. Note that we don't require the foliation to be oriented nor compatible with boundary. See Section \ref{ssec:foliation} and Appendix \ref{app:foliation} for more details.}  
    \label{fig:folintro}
\end{figure}
\begin{theorem}  \label{intro:BV_moduli}
Let $G$ be a Lie supergroup with an odd invariant scalar product on its Lie superalgebra, and let $\Sigma$ be an oriented 2-dimensional manifold with nonempty boundary, equipped with a foliation $f$. Then, the moduli space $\ModSpace{\Sigma}{G}$ carries a  natural BV structure $\Delta^f$. Futhermore, if $\g={\rm Lie}(G)$ is a unimodular Lie superalgebra, then  $\Delta^f$ is independent of $f$.
\end{theorem}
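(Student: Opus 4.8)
The plan is to write $\Delta^f$ as an explicit Fock--Rosly-type operator on a combinatorial model of $\Sigma$ and then to check the BV axioms directly. Since $\partial\Sigma\neq\emptyset$, the surface deformation retracts onto an embedded ribbon graph $\Gamma\subset\Sigma$, whose fat structure (the cyclic ordering of half-edges at each vertex) is induced by the orientation. The space of $G$-valued graph connections $G^{\Edges(\Gamma)}$, acted on by the gauge group $G^{\Vertices(\Gamma)}$, then satisfies $G^{\Edges(\Gamma)}/G^{\Vertices(\Gamma)}\cong\ModSpace{\Sigma}{G}$, and this is the model on which I would write the formula. An auxiliary ciliation (a linear ordering of the half-edges at each vertex, breaking the cyclic one) is needed to pin down the second-order terms, while the foliation $f$ will be the geometric datum fixing the first-order part of the operator.

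First I would define $\Delta^f$ on $\Func{G^{\Edges(\Gamma)}}$ as a sum of local contributions attached to the vertices of $\Gamma$. The odd invariant scalar product furnishes an odd Casimir $c\in\g\ot\g$ (more precisely an odd classical $r$-matrix whose graded symmetrization is $c$); contracting $c$ with the left/right invariant vector fields associated to a pair of half-edges meeting at a vertex yields an \emph{odd} second-order operator whose symbol is the odd Fock--Rosly bivector, i.e.\ the odd analogue of the Atiyah--Bott structure. To this I would add the first-order ``divergence'' terms arising from the self-pairing of an edge with itself; these are precisely the terms calibrated by the foliation $f$. By construction $\Delta^f$ is a second-order differential operator with $\Delta^f(1)=0$, and since $c$ is $\ad$-invariant it commutes with the gauge action and descends to $\Func{\ModSpace{\Sigma}{G}}$; the usual Fock--Rosly argument should show independence of the auxiliary ciliation and of $\Gamma$, giving naturality.

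The heart of the matter is the master equation $(\Delta^f)^2=0$. I would localize it to identities at the vertices of $\Gamma$, exactly as the Jacobi identity for the Fock--Rosly bracket is localized, the inputs being the classical Yang--Baxter equation for the odd $r$-matrix together with the graded symmetry and $\ad$-invariance of the scalar product. Two features are new relative to the even case. First, the odd pairing forces a careful bookkeeping of Koszul signs; it is precisely the oddness of $c$ that makes $\Delta^f$ an operator of genuine BV type (odd, squaring to zero) rather than the generator of an even Poisson bivector. Second, one must verify that the foliation-dependent first-order terms are tuned so that \emph{all} homogeneous components of $(\Delta^f)^2$ vanish, not merely the second-order Jacobiator. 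I expect this combined sign- and divergence-bookkeeping to be the main obstacle.

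Finally, for the independence under unimodularity I would compare $\Delta^f$ and $\Delta^{f'}$ for two foliations. Their second-order symbols agree, since the symbol is the canonical odd bivector and does not depend on $f$, so the difference $\Delta^f-\Delta^{f'}$ is a first-order operator, i.e.\ a vector field on $\ModSpace{\Sigma}{G}$. I would identify this vector field with the modular vector field of the odd Poisson structure, whose vertex-local expression is governed by the supertrace $\operatorname{str}\ad$ of $\g$. For $\g$ unimodular this supertrace vanishes, the first-order correction disappears, and $\Delta^f=\Delta^{f'}$, establishing foliation-independence.
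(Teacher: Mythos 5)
Your overall framework is the same as the paper's (a skeleton/ribbon graph model, vertex-local second-order terms built from the odd Casimir, first-order terms calibrated by the foliation, descent to the quotient), but there is a genuine gap at what you yourself call the heart of the matter: the master equation. You propose to prove $(\Delta^f)^2=0$ on the space of graph connections $G^{\Edges(\Gamma)}$ by localizing to vertices and invoking ``the classical Yang--Baxter equation for the odd $r$-matrix.'' Two problems. First, the theorem's hypotheses provide only the odd invariant scalar product, hence only the graded-\emph{symmetric} element $t\in\g\ot\g$; an odd $r$-matrix whose symmetrization is $t$ and which satisfies CYBE is extra data that need not exist (exactly as in the even Fock--Rosly setting, where such a $\Lambda$ with $[\Lambda,\Lambda]/2=-\phi$ exists only for special quadratic Lie algebras). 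Second, and more fundamentally, with the symmetric element alone the identity $(\Delta^f)^2=0$ is simply \emph{false} on $G^{\Edges(\Gamma)}$: the vertex-local relations satisfied by $\tilde t_{ab}$ are Drinfeld--Kohno-type relations, e.g. $[\tilde t_{ab},\tilde t_{bc}]=-24\tilde\phi_{abc}$, which yield $(\Delta^\Gamma)^2=\sum_{p\in V}\rho_p(\phi)$, the Cartan element acting through the gauge action. This is the quasi-BV structure of the paper; the square vanishes only after restricting to $G^V$-invariant functions, i.e.\ on $\ModSpace{\Sigma}{G}$ itself, because $\phi\in U\g$ acts through a $\g$-action that annihilates invariants. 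Your plan, which aims to kill \emph{all} homogeneous components of $(\Delta^f)^2$ upstairs, is therefore attempting to prove a false statement (or else smuggles in an unavailable $r$-matrix).

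A second, smaller gap concerns where the foliation is actually needed. You present the first-order terms as an adjustable calibration and assert that ``the usual Fock--Rosly argument'' gives independence of the ciliation and of $\Gamma$. In the odd case that argument acquires a new feature: under a slide move, a term of the form $t_{\Ad_g(c'),\Ad_g(c')}$ appears, which in the even case vanishes identically but in the odd case equals a nonzero multiple of the vector $\nu$ defined by $\langle\nu,x\rangle=\operatorname{str}_{\g}\ad_x$ (one has $\tilde t_{aa}=\iota_a(\nu)/2$). Skeleton-independence then holds only because this defect is exactly cancelled by the jump of the rotation numbers $\mathrm{rot}_\gamma$ under composition of edges; this is why the foliation term must be $\tfrac12\mathrm{rot}_\gamma\,(\nu)_{a_\gamma}$ and not merely some divergence-like correction. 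Your final paragraph on unimodularity is essentially correct in spirit (the foliation dependence is proportional to $\nu$, which vanishes iff $\g$ is unimodular), but without the precise slide analysis the well-definedness of $\Delta^f$ for non-unimodular $\g$ — which is part of the first assertion of the theorem — is not established.
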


Next, we consider the case of the Lie supergroup $G=Q(n)$. Its Lie superalgebra $\q(n)$ is unimodular and carries an odd invariant scalar product.  Hence, moduli spaces $\ModSpace{\Sigma}{Q(n)}$ carry canonical BV structures
$\Delta$. Elements of $Q(n)$ are expressions of the form
$$
u=u_{\rm even} + \xi \, u_{\rm odd},
$$
where $u_{\rm even}, u_{\rm odd} \in \operatorname{Mat}_{n}(\mathbb R)$, and $\xi$ is an odd element with $\xi^2=1$. We define an analogue of the Goldman map
$$
\ogm\colon |\gamma| \mapsto \ogm_{|\gamma|}([\rho]) = {\rm otr}(\rho({\gamma})),
$$
where ${\rm otr}(u)=\tr(u_{\rm odd})$. Images of elements $|\gamma| \in \Goldman{\Sigma}$ are odd functions on the moduli space, and we extend the map $\ogm$ to
$$
\ogm\colon \wedge \Goldmanred{\Sigma} \to {\rm Fun}(\ModSpace{\Sigma}{Q(n)}).
$$

Our second result is given by the following theorem:
\begin{theorem}\label{thm:OddGoldman}
The map $\ogm$ is a morphism of BV algebras with respect to the BV structure on $\wedge \Goldman{\Sigma}$ defined by the Goldman-Turaev Lie bialgebra $\Goldmanred{\Sigma}$, and the BV structure on ${\rm Fun}(\ModSpace{\Sigma}{Q(n)})$ defined by Theorem \ref{intro:BV_moduli}.
\end{theorem}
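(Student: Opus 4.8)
The plan is to verify the two defining properties of a morphism of BV algebras: that $\ogm$ is a morphism of graded-commutative algebras, and that it intertwines the two BV operators. The first is immediate from the construction, since by definition $\ogm$ sends a wedge $|\gamma_1|\wedge\cdots\wedge|\gamma_k|$ to the product $\ogm_{|\gamma_1|}\cdots\ogm_{|\gamma_k|}$ of odd functions; thus $\ogm$ is multiplicative and respects the $\mathbb{Z}/2$-grading, each generator being sent to an odd function. It is moreover well defined on the reduced space $\wedge\Goldmanred{\Sigma}$, because the trivial loop $\bigcirc$ is sent to $\otr(\mathrm{Id})=\tr(0)=0$, the identity of $Q(n)$ having vanishing odd part. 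The real content is therefore the intertwining relation $\ogm\circ\Delta_\wedge=\Delta\circ\ogm$, where $\Delta_\wedge$ is the BV operator on $\wedge\Goldmanred{\Sigma}$ determined by the Goldman--Turaev bialgebra and $\Delta$ is the Fock--Rosly BV operator of Theorem \ref{intro:BV_moduli}. Since $\q(n)$ is unimodular, $\Delta$ is canonical by Theorem \ref{intro:BV_moduli}, so we are free to compute it using any convenient foliation.

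Next I would reduce this intertwining to the generators. Both $\Delta_\wedge$ and $\Delta$ are second-order operators obeying the same BV--Leibniz rule $\Delta(xy)=\Delta(x)\,y\pm x\,\Delta(y)+\{x,y\}$, where on the source $\{\cdot,\cdot\}$ extends the Goldman bracket and on the target it is the odd Poisson bracket induced by $\Delta$. Because $\ogm$ is multiplicative, an induction on wedge-length shows that $\ogm\circ\Delta_\wedge=\Delta\circ\ogm$ holds on all of $\wedge\Goldmanred{\Sigma}$ as soon as it holds on generators and the induced brackets are intertwined; the latter, being a biderivation statement, again reduces to generators. This leaves two local identities: \textbf{(Turaev)} $\Delta\bigl(\ogm_{|\gamma|}\bigr)=\ogm\bigl(\delta_{\mathrm{Turaev}}(|\gamma|)\bigr)$ for a single curve, and \textbf{(Goldman)} $\{\ogm_{|\gamma_1|},\ogm_{|\gamma_2|}\}=\ogm\bigl([|\gamma_1|,|\gamma_2|]_{\mathrm{Goldman}}\bigr)$ for two curves. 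These are the odd analogues of Goldman's Theorem \ref{intro:AB=Goldman}, with the even trace replaced by $\otr$ and the Atiyah--Bott bracket by the odd BV bracket.

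Both local identities I would establish by the mechanism of the even case, feeding the explicit Fock--Rosly expression for $\Delta$ into a computation local to each (self-)intersection point. At a transverse (self-)intersection $p$ the Fock--Rosly formula inserts the split odd Casimir $\sum_a e_a\otimes e^a$ of the odd invariant form on $\q(n)$ between the two strands of holonomy. The heart of the matter is then a purely algebraic fusion identity for the odd trace: contracting two odd traces along the odd Casimir reconnects them into a single odd trace, producing the merged class $|\gamma_1\cdot_p\gamma_2|$ of the Goldman bracket, while contracting a single odd trace along the Casimir at a self-intersection splits it into a product of two odd traces, producing $|\gamma'_p|\wedge|\gamma''_p|$ of the Turaev cobracket. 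In matrix-unit bases for the queer algebra this is the explicit statement $\sum_a\otr(A\,e_a)\,\otr(e^a\,B)=\otr(AB)$ and its reverse, which is exactly what singles out $\otr$, since the even trace would pair with the odd Casimir to give the identically vanishing supertrace. Summing the signs $\epsilon_p$ over intersection points then reproduces the Goldman bracket and the Turaev cobracket verbatim.

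The hard part will be the local computation of the second-order operator $\Delta$ on a single function $\ogm_{|\gamma|}$, namely identity \textbf{(Turaev)}, together with the attendant parity and sign bookkeeping. Unlike the Goldman identity, which sees only the first-order biderivation part of $\Delta$ and is a direct odd transcription of the Atiyah--Bott computation, the Turaev identity genuinely uses that $\Delta$ is second order: one must show that its diagonal self-contraction on $\otr(\rho(\gamma))$ localizes precisely to the transverse self-intersections of $\gamma$, with no spurious diagonal or boundary contributions, and that the odd Casimir there yields exactly $\ogm_{|\gamma'_p|}\wedge\ogm_{|\gamma''_p|}$ with the correct sign $\epsilon_p$ and Koszul sign. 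Verifying that the odd and unimodular structure makes all framing- and foliation-dependent ambiguities cancel, so that the combinatorial answer is the foliation-independent $\Delta$ of Theorem \ref{intro:BV_moduli}, is where the main care is needed.
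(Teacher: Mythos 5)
Your proposal follows essentially the same route as the paper's proof (Theorem \ref{thm:holonomyisBVmap}): multiplicativity of $\ogm$ plus the BV--Leibniz rule reduce the statement to the two generator identities; the second-order operator $\Delta$ is localized at transverse (self-)intersections as chord insertions of the odd Casimir (the paper packages this step as Theorem \ref{thm:geometricBV}); the intersection case then follows from invariance of $\otr$ and completeness of the pairing, while the self-intersection case needs exactly the splitting identity you single out, which is special to the queer algebra and is the paper's Proposition \ref{prop:queerisdiscardy}, Equation \eqref{eq:discardyforqn}.

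The one point you get wrong is the normalization: the computation does \emph{not} reproduce the Turaev cobracket verbatim. The splitting identity \eqref{eq:discardyforqn} carries a factor $-\tfrac12$, and as a result $\ogm$ intertwines $\Delta$ with the BV operator built from $[\cdot,\cdot]_{\mathrm G}$ and $2\delta_{\mathrm T}$, not $\delta_{\mathrm T}$ (this is stated explicitly after Theorem \ref{thm:holonomyisBVmap}). This is harmless for the theorem as stated --- rescale the cobracket, or equivalently the odd pairing --- but your claimed final identity, as written, is off by this factor, and it is precisely the kind of bookkeeping you deferred to ``where the main care is needed.''
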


The supergroup $Q(n)$ comes with another odd function $\odet$, which leads to an extension of the Goldman-Turaev Lie bialgebra to $\Goldmanred{\Sigma}\oplus H_1(\Sigma)$. The bracket between $\Goldmanred{\Sigma}$ and $H_1(\Sigma)$ is given by the intersection pairing, and the cobracket is extended by $0$ to $H_1(\Sigma)$. An extension of the map $\ogm$, sending a 1-cycle $[a]$ to the function  $$\rho \mapsto \odet \rho(a),$$ is a well defined  morphism of BV algebras.

Theorem \ref{thm:OddGoldman} is an analogue of the Goldman's result for $G=U(N), {\rm GL}(N)$, and it gives a constructive answer to the main question addressed in the paper.

In order to prove these results, we use the technique introduced by Fock-Rosly \cite{FockRosly}. In more detail, we choose a finite set $V \subset \partial \Sigma$ and consider a subgroupoid $\Pi_1(\Sigma, V)$ with base points $V$. The set of representations
$$
\ModSpace{\Sigma, V}{G}={\rm Hom}(\Pi_1(\Sigma, V), G),
$$
has the property that\footnote{By $G^V$, we mean a collection of elements of $G$ indexed by $V$, i.e. a morphism of sets $V\to G$.}
$$
\ModSpace{\Sigma}{G} \cong \ModSpace{\Sigma, V}{G}/G^V.
$$
In contrast to $\ModSpace{\Sigma}{G}$, the space $\ModSpace{\Sigma, V}{G}$ admits easy descriptions given a choice of an embedded graph $\Gamma \subset \Sigma$ with the following properties: the set of vertices of $\Gamma$ is $V$, and the surface $\Sigma$ retracts to $\Gamma$. 
Then, $\ModSpace{\Sigma, V}{G} \cong G^E$, where $E$ is the set of edges of $\Gamma$.

In this context, Fock and Rosly \cite{FockRosly} defined a bivector $\pi^\Gamma_\text{Fock-Rosly}$ on $\ModSpace{\Sigma, V}{G}$ given by an explicit combinatorial formula. In more detail, let $s \in \g \otimes \g$ be the canonical element with respect to the invariant scalar product on $\g$. Then,
\begin{equation}       \label{intro:FR}
\pi^\Gamma_{\text{Fock-Rosly}} = \frac{1}{2} \, \sum_{v \in V} \sum_{a<b \in S(v)} s_{ab}.
\end{equation}
Here $S(v)$ is the star of the vertex $v$ which consists of half-edges with endpoint $v$, $a<b$ refers to the order of elements of $S(v)$ induced by the orientation of $\Sigma$, and $s_{ab}$ is a bidifferential operator  acting on pairs of functions on $G^E$ by differentiating the copies of $G$ corresponding to the half-edges $a$ and $b$. The following theorem summarises several results:
\begin{theorem}[Fock-Rosly \cite{FockRosly}, Massuyeau-Turaev \cite{MassuyeauTuraev2012}, Nie \cite{Nie2013}, Li-Bland-\v{S}evera \cite{LBSQuilted}]
Bivectors defined on $\ModSpace{\Sigma, V}{G}$ by different choices of the graph $\Gamma$ coincide with each other. The bivector $\pi_\textnormal{Fock-Rosly}$ descends to $\ModSpace{\Sigma}{G}$ giving $\{ \cdot, \cdot\}_\textnormal{Atiyah-Bott}$. 
\end{theorem}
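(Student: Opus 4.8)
The plan is to organize both assertions through the quasi-Poisson formalism, in which $(G^E, \pi^\Gamma)$ is a quasi-Poisson $G^V$-space and the moduli space is its reduction. Write $\s=\sum_i e_i\ot e^i\in\g\ot\g$ for the canonical (symmetric, $\ad$-invariant) element associated to the scalar product, and let $\phi\in\wedge^3\g$ be the Cartan trivector, proportional to $[\s_{12},\s_{13}]$. The two structural facts I rely on throughout are the $\ad$-invariance of $\s$ and that $\phi$ is totally antisymmetric and $\ad$-invariant; the nonvanishing of $\phi$ is precisely the ``quasi'' defect $[\pi^\Gamma,\pi^\Gamma]\neq 0$ that survives only until reduction. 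In this language $G^E$ is assembled from elementary one-edge building blocks by the fusion operation, and each fusion adds exactly the cross-terms $\tfrac12\s_{ab}$ at the vertex where two blocks meet, so that the fusion product reproduces the vertex formula $\pi^\Gamma=\tfrac12\sum_v\sum_{a<b\in S(v)}\s_{ab}$ on the nose.

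First I would treat independence of $\Gamma$. Any two embedded graphs with vertex set $V$ onto which $\Sigma$ retracts are connected by a finite sequence of elementary moves: reversal of the orientation of an edge, the Whitehead (flip) move exchanging the diagonal of a quadrilateral, sliding the endpoint of one half-edge past an adjacent one in the (here linear, since $v\in\partial\Sigma$) order on $S(v)$, and the stabilizations that split or merge edges at a vertex. For each move I would express the holonomy coordinates of $\Gamma'$ in terms of those of $\Gamma$ — these are products and inverses of the old ones — and verify that the push-forward of $\pi^\Gamma$ agrees with $\pi^{\Gamma'}$. Edge reversal merely exchanges left- and right-invariant derivatives and follows at once from the symmetry of $\s$; the slide and flip moves carry the content, since they change which cross-terms $\s_{ab}$ occur at a vertex. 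There the computation reduces to the $\ad$-invariance of $\s$ together with the identity presenting $[\s_{12},\s_{13}]$ as $\phi$, which is exactly what forces the correction produced by reordering half-edges to cancel.

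Next I would prove descent and the identification with Atiyah-Bott. The residual gauge group $G^V$ acts on $G^E$ by the natural dressing action, and I would check that $\pi^\Gamma$ is $G^V$-quasi-Poisson with group-valued moment map given by the boundary holonomies; consequently $[\pi^\Gamma,\pi^\Gamma]=\sum_v\phi_v$ is supported on the diagonal gauge directions and vanishes when restricted to $G^V$-invariant functions. Hence $\pi^\Gamma$ descends to a genuine Poisson bivector on $\ModSpace{\Sigma}{G}=G^E/G^V$. To identify it with the Atiyah-Bott bivector I would first verify it on a minimal model — the one-vertex ribbon graph (bouquet) on $\Sigma$, giving $\ModSpace{\Sigma, V}{G}\cong G^{2g+b-1}$ — where the reduced bracket can be matched directly with the structure coming from $\int_\Sigma\langle\delta A\wedge\delta A\rangle$, and then extend to arbitrary $\Sigma$ using that both structures are natural under gluing of surfaces along boundary arcs, each gluing contributing precisely one fusion term.

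The main obstacle is the invariance of $\pi^\Gamma$ under the slide and flip moves: this is where the formula must be manipulated carefully, tracking the change in the linear order on each star $S(v)$ and showing that the resulting correction terms assemble into the Cartan trivector identity above. Once that invariance is secured, descent is essentially formal from the moment-map picture, and the identification with Atiyah-Bott is a matching of normalizations on the building blocks.
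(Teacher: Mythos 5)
Your proposal takes essentially the same route as the paper: invariance of $\pi^\Gamma$ is checked move-by-move (edge reversal via $\mathrm{Inv}_*(x^{\mathrm L})=-x^{\mathrm R}$, slides via the $\ad$-invariance of $s$ together with the Cartan-trivector identities, loop edges via fusion), descent then follows from the standard quasi-Poisson reduction argument ($G^V$-invariance plus the fact that the Jacobiator $\sum_{p}\rho_p(\phi)$ annihilates invariant functions), and the identification with the Atiyah--Bott bracket is reduced to a normalization check, which the paper itself also defers to the literature (Goldman's product formula). The one caveat is your list of elementary moves: the paper's Proposition \ref{prop:2Dtopology} (proved in Appendix \ref{app:skeleton} via Penner's flip calculus) shows that isotopies, edge reversals and slides already suffice, whereas Whitehead flips live on uni-trivalent graphs rather than on skeletons, and ``stabilizations'' that split or merge edges would change the vertex set $V$ and hence the model $G^{E}$ itself, so you should invoke that connectivity result rather than your ad hoc list of moves.
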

It is worth noting that $\pi_\textnormal{Fock-Rosly}$ is not a Poisson bivector. Instead, it has a controllable defect in the Jacobi identity. Such bivectors are called quasi-Poisson bivectors \cite{AKS, AKSM}, and they have many properties similar to Poisson bivectors.

In the case when $G$ is a Lie supergroup with an odd invariant scalar product on its Lie superalgebra $\g$, we introduce a canonical element $t \in \g \otimes \g$. Assuming that $\g$ is unimodular, we define a second order differential operator on $\ModSpace{\Sigma, V}{G} \cong G^E$:
\begin{equation}       \label{intro:Delta}
\Delta^\Gamma = \frac{1}{2} \, \sum_{v \in V} \sum_{a<b \in S(v)} t_{ab}.
\end{equation}
In comparison to equation \eqref{intro:FR}, there are two differences: first, $t$ is an odd element while $s$ is even. Second, we view $s_{ab}$ as a bidifferential operator acting on a pair of functions on $G^E$ while we consider $t_{ab}$ as a second order differential operator acting on one function. Despite these differences, properties of $\Delta^\Gamma$ resemble those of the Fock-Rosly bivector:
\begin{theorem}  \label{intro:quasiBV}
Second order differential operators defined on $\ModSpace{\Sigma, V}{G}$ by different choices of the graph $\Gamma$ coincide with each other. The operator $\Delta$ descends to a BV operator on $\ModSpace{\Sigma}{G}$.
\end{theorem}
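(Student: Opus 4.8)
The plan is to establish the two assertions by following the template set by the Fock-Rosly bivector in the preceding theorem, splitting the second assertion into a descent step and a square-zero step. Throughout I would make $t_{ab}$ precise as follows: for a half-edge $a\in S(v)$ lying on an edge $e$, let $\nabla_a^X$ (for $X\in\g$) be the left- or right-invariant vector field on the factor $G$ of $G^E$ corresponding to $g_e$, according to whether $e$ is outgoing or incoming at $v$. Writing the odd canonical element as $t=\sum_i t_i'\ot t_i''$, set $t_{ab}=\sum_i \nabla_a^{t_i'}\nabla_b^{t_i''}$, with super-signs dictated by the parities of $t_i'$ and $t_i''$; since $t$ is odd, $\Delta^\Gamma$ is an odd second-order operator on $G^E$. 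The three inputs I would use repeatedly are the $\ad$-invariance of $t$, its graded symmetry, and the unimodularity of $\g$.

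First I would prove graph independence. Since $\Gamma$ is a spine of $\Sigma$ with vertex set $V$, its edge number $|E|=|V|-\chi(\Sigma)$ is fixed, so all admissible graphs have the same number of edges, and any two are connected by a finite sequence of local moves — edge reversals and slide (Whitehead) moves — exactly as in the combinatorial reduction used for the Fock-Rosly bivector in the cited works. Under such a move the edge-holonomy coordinates change by a single group multiplication, so the invariant vector fields entering the affected summands transform by an adjoint action. The $\ad$-invariance of $t$ (equivalently, that $t$ commutes with the diagonal $\g$-action) then matches each summand $t_{ab}$ with its counterpart for the new graph, while the potential anomalies produced by re-expressing a product of invariant fields as a single one are governed by, and cancel on account of, the vanishing of the super-trace of $\ad_X$, i.e. the unimodularity of $\g$. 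I would verify the one nontrivial slide move directly; the rest is bookkeeping.

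Next I would show descent. The residual $G^V$-action acts at each vertex $v$ by simultaneously translating all incident half-edges, so its infinitesimal generator in direction $X\in\g$ is the diagonal field $\rho_v(X)=\sum_{a\in S(v)}\nabla_a^X$. Using the $\ad$-invariance of $t$, one checks that $[\rho_v(X),\Delta^\Gamma]=0$ for every $v$ and $X$, so $\Delta^\Gamma$ preserves the algebra of $G^V$-invariant functions and therefore descends to the quotient $\ModSpace{\Sigma}{G}=\ModSpace{\Sigma,V}{G}/G^V$.

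Finally, the BV property. Being manifestly second-order, it remains to prove $\Delta^2=0$ on the quotient. I would compute $(\Delta^\Gamma)^2$ on $G^E$ directly: insertions on distinct edges commute, so the only contributions come from pairs of second-order terms sharing an edge or a vertex. The edge-diagonal contributions collapse via the graded symmetry of $t$ and the super-Casimir relation and are killed by unimodularity, while the remaining vertex contributions reassemble into a lower-order \emph{quasi-BV} anomaly expressible through the diagonal generators $\rho_v(X)$. Since $\rho_v(X)$ annihilates $G^V$-invariant functions, this anomaly vanishes on the quotient, giving $\Delta^2=0$ there and identifying $\Delta$ as a BV operator. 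This square-zero computation — keeping track of super-signs and of the precise cancellation enforced jointly by $\ad$-invariance, graded symmetry, and unimodularity of $\g$ — is the main obstacle, and it is the BV-analog of the ``controllable defect in the Jacobi identity'' that the Fock-Rosly bivector exhibits before descent.
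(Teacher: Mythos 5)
Your proposal matches the paper's own proof essentially step for step: graph independence is reduced to edge reversals and slide moves (with the slide anomaly $t_{\Ad_{g} c',\,\Ad_{g} c'}\propto \nu$ killed precisely by unimodularity, as in the paper), descent follows from $\g^{V}$-invariance of $t$, and the square of $\Delta^\Gamma$ is identified as a Cartan-type anomaly $\sum_{p}\rho_{p}(\phi)$ acting through the $\g^{V}$-action, hence vanishing on invariant functions --- exactly the paper's quasi-BV property followed by its descent corollary. The one point you wave off as bookkeeping that the paper treats separately is the slide move in which one of the edges is a loop (handled there by the fusion argument of Proposition \ref{prop:oddfusion}), but this is a case analysis rather than a missing idea.
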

Again, the operator $\Delta$ is not a BV operator. Instead, its square is non-vanishing in a controllable way giving rise to a notion of quasi-BV operators. To simplify the presentation, Theorem \ref{intro:quasiBV} is stated for the case of unimodular Lie superalgebras. A more general statement can be found in the body of the paper.

The structure of the paper is as follows: in Section 2, we recall the Fock-Rosly construction. In Section 3, we describe an analogous construction of the quasi-BV operator $\Delta$ for supergroups. In Section 4, we give an alternative, topological construction of $\Delta$, via intersections of curves. In Section 5, we consider moduli spaces for the Lie supergroup $Q(n)$ and we establish a relation between $\Delta$ and the Goldman-Turaev Lie bialgebra. Appendix A is devoted to skeletons and foliations on surfaces with boundary. Appendix B contains information on Lie algebras and Lie superalgebras with an (even or odd) invariant scalar product. Appendix C provides a Hopf algebra viewpoint on the fusion operation for quasi-BV spaces.

\subsection*{Acknowledgements}
We are indebted to V. Serganova for fruitful discussions, 
and we would like to thank E. Getzler and V. Turaev for their interest in our work. We would also like to thank the referee for a careful reading of the manuscript, and Aoi Wakuda for noticing a sign ambiguity in Appendix \ref{app:oddLie}.

Research of A.A. was supported in part by the grants 208235 and 200400 and by the National Center for Competence in Research (NCCR) SwissMAP of the Swiss National Science Foundation, and by the award of the Simons Foundation to the Hamilton Mathematics Institute of the Trinity College Dublin under the program “Targeted Grants to Institutes”.
Research of F.N. was supported in part by the Marie Sklodowska-Curie action, grant agreement no. 896370.
Research of J.P. was supported by the Postdoc.Mobility grant 203065 of the SNSF.

\clearpage

\section{Quasi-Poisson structures on moduli spaces}
In this section, we recall a description of the Poisson structure on the moduli space of flat connections on a surface, due to Fock and Rosly \cite{FockRosly}. We start with some useful results on surfaces. 

\subsection{Surfaces}
\subsubsection{Skeletons}
Let $\Sigma$ be an oriented, compact surface with boundary and $\{p_1, \dots, p_n \} = V\subset \partial \Sigma${}
a finite, non-empty subset of \emph{marked points}.
Recall that  $\Pi_{1}(\Sigma, V)$, the fundamental groupoid with base $V$, is 
 the full subgroupoid of the fundamental groupoid $\Pi_{1}(\Sigma)$ on objects $V\subset\Sigma$.  In other words,
 the set of objects of $\Pi_{1}(\Sigma, V)$ is $V$, while morphisms $p_i \to p_j$ are homotopy classes of paths from $p_i$ to $p_j$.
 
  By a graph, we mean a 1-dimensional CW complex $\Gamma$. The sets of vertices and edges of $\Gamma$ are denoted $\Vertices{\Gamma}$ and $\Edges{\Gamma}$. For each
 vertex $p\in \Vertices{\Gamma}$, we denote by $\he{(p)}$ the set of half-edges of $p$.
\begin{definition}
A \emph{skeleton} of $\Sigma$ is a topological embedding $\Gamma\hookrightarrow \Sigma$ of an oriented graph $\Gamma$ such that
\begin{enumerate}
	\item  restricted to each edge $e \in \Edges\Gamma$, the injection $e\hookrightarrow \Sigma$ is a smooth embedding of manifolds with boundary,
	\item the image of $\Vertices\Gamma$ equals $V$, with no other intersection of the image of $\Gamma$ with $\partial \Sigma$,
	\item $\Sigma$ deformation retracts to the image of $\Gamma$. 
\end{enumerate}
\end{definition}
The edges of such skeleton then freely generate the fundamental groupoid  $\Pi_{1}(\Sigma, V)$, under
composition and inversion.
\begin{remark}
	Skeletons are closely related to ciliated graphs of \cite{FockRosly}.
	Indeed, to any skeleton we can associate a ciliated graph using the orientation
	of the surface, with the cilium	pointing outside of the surface. The thickening 
	of such graph is then homeomorphic to the original surface.
\end{remark}
\begin{remark}
	An alternative approach to describing the fundamental groupoid $\Pi_{1}(\Sigma, V)$
	and thus the moduli space of flat connections, would be to use triangulations,
	or their dual uni-trivalent graphs. This approach 
	would result in slightly more complicated computations in Sections \ref{ssec:FR} and \ref{sec:qBVFR},
	so we have chosen to use skeletons.
\end{remark}

It is easy to see that one can always find a skeleton. Moreover, any two skeletons can be connected
by isotopy, edge reversions and \emph{slides}. A slide is a move of a half-edge along a neighboring edge -- see Figure \ref{fig:moves}.

\begin{proposition}\label{prop:2Dtopology}
	Any two skeletons $\Gamma, \Gamma' \subset \Sigma$ are connected by a finite sequence of isotopies, 
	edge  reversions and slides.
\end{proposition}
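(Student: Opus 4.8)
The plan is to prove Proposition~\ref{prop:2Dtopology} by reducing it to a well-understood statement about spines (spanning graphs onto which the surface retracts) in two-dimensional topology. First I would fix one ``reference'' skeleton $\Gamma_0$ and show that an arbitrary skeleton $\Gamma$ can be brought to $\Gamma_0$ using only the three allowed moves; since the moves are all invertible, this suffices to connect any two skeletons through $\Gamma_0$. The key structural input is that a skeleton is determined, up to the allowed moves, by purely combinatorial data: the underlying abstract graph together with the cyclic orderings of half-edges at each vertex (the ciliated/ribbon structure) induced by the orientation of $\Sigma$, as recalled in the remark on ciliated graphs. So the argument naturally splits into two stages: a topological normalization stage that removes the embedding-specific information, and a combinatorial stage that manipulates the resulting ribbon graph.

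In the first stage I would argue that, since both $\Gamma$ and $\Gamma_0$ are deformation retracts of the same surface $\Sigma$, their regular neighborhoods (thickenings) are each homeomorphic to $\Sigma$; by the classification of compact oriented surfaces with boundary they are homeomorphic to each other, respecting the boundary and the marked points $V$. This gives an abstract isomorphism of ribbon graphs between $\Gamma$ and $\Gamma_0$. The content of the proposition is then to realize this abstract isomorphism by an ambient isotopy together with the discrete moves. I would invoke the standard fact that any homeomorphism of $\Sigma$ fixing $\partial\Sigma$ (and hence $V$) is isotopic to one carrying a spine to a spine, and that on the level of spines two isotopic embeddings realizing isomorphic ribbon graphs differ by edge slides and edge reversals. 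Concretely, an edge reversion accounts for the choice of orientation on each edge, and a slide (Figure~\ref{fig:moves}) is precisely the elementary move that changes the cyclic position of a half-edge relative to a neighboring edge while preserving the homotopy type of the retraction.

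The heart of the combinatorial stage is a normal-form argument: using slides I would first reduce both graphs to a standard spine of $\Sigma$ (for instance a wedge of circles and arcs to the boundary, or a fixed ``fan'' at a single vertex), much as one reduces a ribbon graph to canonical form by elementary contractions and slides. The slides let me move half-edges past one another around a vertex and migrate a half-edge from one vertex to an adjacent one along a connecting edge, which is enough to realize any ambient isotopy of the embedding up to relabeling; edge reversions then fix the remaining orientation discrepancies. Finiteness of the sequence follows because the graphs have finitely many edges and each normalization step strictly decreases a discrete complexity (e.g. the number of edges not yet in standard position), so the process terminates.

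The main obstacle I anticipate is controlling the passage from ``ambient isotopic'' to ``related by finitely many slides and reversions'' in a way that is genuinely rigorous rather than a hand-wave. The delicate point is that an isotopy of $\Sigma$ can move a skeleton through a continuous family of embedded graphs, and one must discretize this family into finitely many elementary moves; this requires a transversality/general-position argument showing that a generic isotopy meets only finitely many ``slide-type'' codimension-one degenerations, each of which corresponds to exactly one slide move. Handling edges that run into the boundary at the marked points $V$, and ensuring that the cilia (boundary-pointing directions) are tracked correctly throughout, is the part where the bookkeeping is most likely to hide a genuine subtlety rather than a routine check. I would therefore concentrate the detailed work on establishing this discretization carefully, relegating the surface-classification and ribbon-graph isomorphism steps to standard references.
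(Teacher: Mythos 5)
Your proposal contains a genuine gap, and it sits exactly at the step you delegate to ``standard facts.'' The claim that two skeletons $\Gamma$, $\Gamma_0$ of the same surface admit an abstract isomorphism of ribbon graphs because their thickenings are both homeomorphic to $\Sigma$ is false: homeomorphic thickenings do not determine the ribbon graph, and distinct skeletons of the same $(\Sigma, V)$ typically have non-isomorphic underlying abstract graphs (a single slide already changes which vertex a half-edge is attached to, hence the graph isomorphism type). That is precisely why slides appear in the statement of Proposition~\ref{prop:2Dtopology} at all. Moreover, even when two skeletons \emph{do} have isomorphic ribbon structures they need not be ambient isotopic: the image of a skeleton under a Dehn twist is a skeleton with the same ribbon data but with edges in different homotopy classes, so there are infinitely many pairwise non-isotopic skeletons of a fixed combinatorial type. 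Consequently no ``normal form up to isotopy within a combinatorial type'' exists, and your proposed induction on a discrete complexity has nothing to induct on. Relatedly, the obstacle you single out as the heart of the matter --- discretizing an ambient isotopy into finitely many slides --- is not an obstacle at all: isotopies are themselves allowed moves in the proposition, so they never need to be converted into anything.

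The real content of the proposition is the connectivity statement for non-isotopic spines, and this is not a routine normalization: it is essentially equivalent to the connectivity of the flip complex of ideal triangulations (Whitehead moves), a theorem going back to Whitehead, Harer and Penner. This is exactly the input the paper's proof organizes around: it resolves each marked point into a binary tree so that a skeleton becomes a uni-trivalent graph dual to a triangulation, proves a combinatorial exchange lemma showing that any two choices of spanning forest (whose contraction recovers a skeleton) yield slide-equivalent skeletons, and then invokes Penner's result that any two such uni-trivalent graphs are related by flips, checking that flips along forest edges do not change the skeleton while the remaining moves are realized by slides. Your outline would need to import this triangulation-connectivity theorem (or reprove it); without it, the passage from ``both are spines of $\Sigma$'' to ``connected by isotopies, reversions and slides'' does not close, since the mapping class group ambiguity is invisible to all the data your argument tracks.
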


\begin{figure}[h!]
	\centering
	\begin{subfigure}[c]{0.3\textwidth}
		\begin{center}
			\includesvg{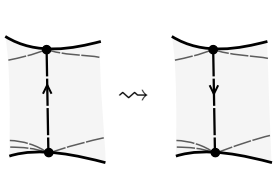}
			\vspace*{8mm}
		\end{center}
		\caption{Reversing an edge.}
		\label{fig:convention}
	\end{subfigure}
	\qquad  
	\begin{subfigure}[c]{0.52\textwidth}
		\begin{center}
			\includesvg[scale=0.7]{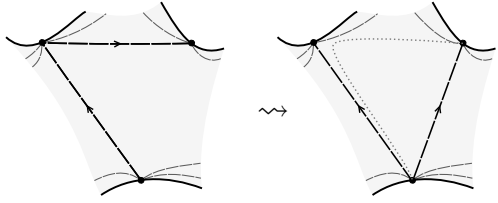}
		\end{center}
		\caption{Sliding the top edge along the left edge. Note that some of the marked 
			vertices could be identified.}
		\label{fig:slide}
	\end{subfigure}
	\caption{The moves between skeletons. The solid line is the boundary $\partial \Sigma$.}\label{fig:moves}
\end{figure}

This fact seems to be known among experts, see the work of Bene \cite[Theorem~5.3]{Bene2010} (where
$\Sigma$ has one boundary component and one marked point) and Jackson \cite[Corollary~6.21]{JacksonThesis} 
(where each boundary component has one marked point). We present a 
proof of this version of the claim in Appendix~\ref{app:skeleton}.

\medskip

\subsubsection{Foliations}
\label{ssec:foliation}
For Batalin-Vilkovisky structures on moduli spaces, we will need surfaces
equipped with a 1-dimensional foliation. Since $\Sigma$ has a boundary, we will consider foliations in the following sense:
\begin{definition}
	By a foliation of a surface with boundary, we mean a decomposition of $\Sigma$ into subsets which can be extended, in each local chart $U \xrightarrow{\cong} V \subset \mathbb R \times \mathbb R_{\ge 0}$, to a smooth 1-dimensional foliation of an open subset of $\mathbb R^2$ containing $V$.
	\medskip
	
	If marked points $V\subset \partial \Sigma$ are chosen, a foliation is moreover required to be tangent to the boundary at each marked point. A homotopy of foliations is required to preserve this tangency.
\end{definition}
Note that we don't put any requirements on the foliation away from $V$, i.e. it can be tangent to the boundary also away from $V$. For an example and classification of such foliations, 
see Appendix \ref{app:foliation}. 
\medskip

We can use a foliation to measure the number of turns of a path connecting points of $V$. 
\begin{definition}\label{def:foliation}
 Let $\gamma$ be an immersed path connecting two points of $V$, transverse
 to the boundary at its endpoints. Then, we define $\mathrm{rot}_{\gamma} \in \frac 12 \mathbb Z$  as the number of positive turns (with respect to the orientation of the surface) the foliation takes along the path.
 
 Concretely, for generic $\gamma$, there is a contribution of $\pm \frac 12$ for each
 time $\gamma$ becomes tangent to the foliation, with $-\frac 12$ if the turn is 
 compatible with the orientation of the surface, as on Figure \ref{fig:foliation}. Alternatively, we can  pick a metric on the surface such that $\gamma$ is perpendicular to the foliation  at its endpoints. Then the angle between $\dot{\gamma}$ and the foliation gives a closed  loop in $\mathbb{RP}^{1}$; the rotation numbers is one half of the homotopy class of this map, where we take as the generator the positive homotopically-trivial loop.
\end{definition}	
\begin{figure}[h]
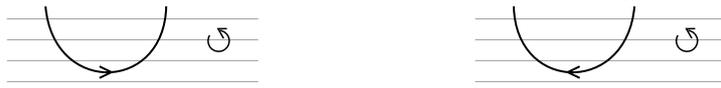

	\centering
	\begin{subfigure}[b]{0.35\textwidth}
		\begin{center}
			\includesvg{images/rotation1.svg}
		\end{center}
		\caption{Contribution of $-\frac12$ to the rotation number}
		\label{fig:foliation1}
	\end{subfigure}
	\qquad  
	\begin{subfigure}[b]{0.35\textwidth}
		\begin{center}
			\includesvg{images/rotation2.svg}
		\end{center}
		\caption{Contribution of $\frac12$ to the rotation number}
		\label{fig:foliation2}
	\end{subfigure}
	\caption{Rotation number. The surface is oriented counter-clockwise, as depicted.}\label{fig:foliation}
\end{figure}

The number of rotations $\mathrm{rot}_{\gamma}$ is an invariant of regular homotopy of $\gamma$. Moreover, it depends only on the homotopy class of the compatible foliation.
It satisfies
\[ \mathrm{rot}_{\gamma^{-1}} = - \mathrm{rot}_{\gamma}, \]
where $\gamma^{-1}$ is the path $\gamma$ with reversed orientation. 

Given two regular homotopy classes of paths meeting at a vertex, we can use the orientation of the surface to define their composition, see Figure \ref{fig:comprot}. With this convention, the rotation number of the composition $\gamma_2\gamma_1$ satisfies  \begin{equation}\label{eq:comprot}\mathrm{rot}_{\gamma_{2}\gamma_{1}} = 	\mathrm{rot}_{\gamma_{1}} + 	\mathrm{rot}_{\gamma_{2}}  + \frac 12.\end{equation} 
\begin{figure}[h]
    \centering
    \includesvg{images/comprotf.svg}
    \caption{The composition $\gamma_2\gamma_1$ is defined by smoothing the usual composition $\gamma_2\gamma_1$, where we position the ends of $\gamma_1$ and $\gamma_2$ as shown above (i.e. the end of $\gamma_1$ is before the start of $\gamma_2$ using the linear order defined in Figure \ref{fig:linorder}).}
    \label{fig:comprot}
\end{figure}


\subsection{The Poisson bivector of Fock and Rosly} \label{ssec:FR}
	Let us now recall the construction of Fock and Rosly \cite{FockRosly},
	of a Poisson structure on the moduli space of flat connections on a surface.
	\medskip

	As before, let $\Sigma$ be an oriented, compact surface with boundary, and
	$V$ a non-empty, finite set of marked points belonging to $\partial \Sigma$. Let $G$ be a connected 
	Lie group, with a Lie algebra $\g$. We will study $\ModSpace{\Sigma, V}{G}$, the space of principal 
	$G$-bundles with a flat connection and a chosen trivialization at $V$, modulo isomorphisms.
	Concretely, we will describe the moduli space as the space of all 
	groupoid homomorphisms from $\Pi_{1}(\Sigma, V)$  to $G$
	\[ \ModSpace{\Sigma, V}{G} = \mathrm{Hom}(\Pi_{1}(\Sigma, V), G) \,.\]
	Given a flat connection on $\Sigma$, we get an element of $\mathrm{Hom}(\Pi_{1}(\Sigma, V), G)$ assigning 
	 holonomy $\mathrm{hol}(\gamma)\in G$ to any path $\gamma$ in $\Sigma$ 
	 between any two points of $V$. This gives an isomorphism between the 
	 moduli space of flat bundles and $\mathrm{Hom}(\Pi_{1}(\Sigma, V), G)$. 
	\medskip{}

	If we choose a skeleton $\Gamma$ of $\Sigma$ with $N$ edges $\gamma_{1}, \dots, \gamma_{N}$, 
	we get an isomorphism
	\[ \Psi_{\Gamma} \colon G^{\times N} \xrightarrow{\sim}\ModSpace{\Sigma, V}{G} \]
	given by specifying the holonomies $(\mathrm{hol}(\gamma_{1}), \dots, 
	\mathrm{hol}(\gamma_{N})) \in G^{{\times N}}$. Note that in our convention,
	 composition of two paths $\gamma_{1}\gamma_{2}$ is a path first traversing
	  $\gamma_{2}$ and then $\gamma_{1}$ so that $\mathrm{hol}(\gamma_{1}\gamma_{2})
	   = \mathrm{hol}(\gamma_{1})\mathrm{hol}(\gamma_{2})$.
	\medskip{}

	On the moduli space of flat connections, there are $|V|$ pairwise-commuting 
	$G$ (and $\g$) actions $\rho_{p}$, coming
	from gauge transformations at points $p$ of $V$. They correspond to left/right
	multiplication of the holonomies of paths incident at that vertex. For example, for $V=\{p\}$,
	the $G$-action is $$(\mathrm{hol}(\gamma_{1}), \dots, \mathrm{hol}(\gamma_{N})) \mapsto{}
	(g \,\mathrm{hol}(\gamma_{1})g^{-1}, \dots, g\,\mathrm{hol}(\gamma_{N})g^{-1}).$$
	
	To define the bivector field of Fock and Rosly on the moduli space, we 
	need to choose a skeleton $\Gamma$ on $\Sigma$. Let us denote
	by $\he(p)$ the set of half-edges
	incident at $p\in V$, which is linearly ordered using the orientation of 
	$\Sigma$, see Figure \ref{fig:linorder}.
	
	If $\Gamma$ has edges $\gamma_{1}, \dots, \gamma_{N} $, we can define an action of $G^{\he(p)}$ on the moduli space $\ModSpace{\Sigma, V}{G}\cong G^{\times N}$ as follows. If $a$ is a half-edge of an edge $\gamma_{i}$, 
	and $g\in G$, we define  $(g)_{a}\colon G^{\times N} \to G^{\times N}$ by 
	\begin{equation}
	\begin{aligned} \label{eq:LAaction}
		(g)_{a} &= 1_{G}\times \dots \times L_{g} \times \dots \times 1_{G} \quad \text{ if $a$ arrives at $p$,} \\
		(g)_{a} &= 1_{G}\times \dots \times R_{g^{-1}} \times \dots \times 1_{G} \quad \text{ if $a$ leaves $p$,}
	\end{aligned}
	\end{equation}
	where $L_{g}$ or $R_{g^{-1}}$ act on the $i$th factor of $G^{\times N}$, 
	i.e. the factor corresponding to $\mathrm{hol}(\gamma_{i})$.
	For $x\in \g$, we denote the induced Lie algebra action $(x)_{a}$, i.e.
	$(x)_{a} = -x^{R}$ or $x^{L}$ for incoming/outgoing half-edge $a$. Note that
	the action $\rho_{p}$ is the product/sum of these actions 
	over all half-edges incident at $p$, for example $\rho_{p}(x) = \sum_{a\in \he(p)} (x)_{a}$.

\begin{figure}
\centering
\begin{minipage}{0.4\textwidth}
		\centering
		\includesvg{images/actions.svg}
		\caption{Lie group and Lie algebra action associated to
		half-edges.}
		\label{fig:actions}
\end{minipage}%
\hspace*{0.1\textwidth}\begin{minipage}{0.4\textwidth}
		\centering
		\includesvg{images/linorder.svg}
		\caption{Orientation of the surface gives a cyclic order
		on half-edges at $p$. Using the boundary, we can pick the first
		half-edge.}
		\label{fig:linorder}
\end{minipage}
	\end{figure}
	
	\medskip
	
	\emph{Let us now also assume} that on $\g$, there's a nondegenerate, invariant 
	symmetric pairing, with inverse $s\in (\mathrm{Sym}^2{\g})^{\g}$. If $e_{i}$
	is a basis of $\g$, let us write\footnote{We use the Einstein summation convention throughout the paper.} $s = s^{ij}e_{i}\otimes e_{j}$.
	 Recall that the  Cartan element $\phi \in \bigwedge^{3}\g$ is defined as 
		$$\phi(\alpha, \beta, \gamma) = \frac 1 {24} \alpha([s^{\#}(\beta), s^{\#}(\gamma)]) \quad \text{for } \alpha, \beta, \gamma \in \g^{*},$$ where $s^{\#} \colon \g^{*} \to \g$ comes from the non-degenerate pairing on $\g$.
	 
	 Let us define a bivector
	on $G^{\times N}$
	\begin{equation}\label{eq:FRbivector}
		\pi_{\mathrm{FR}}^{\Gamma} := \sum_{p\in V}\sum_{\substack{a, b \in \he(p) \\ a<b }}  \frac 12 s^{ij} (e_{i})_{a} \wedge{}
		(e_{j})_{b},
	\end{equation}
	where we use the linear order of half-edges from Figure \ref{fig:linorder}. 
	Using the isomorphism $\Psi_{\Gamma} \colon G^{\times N} \xrightarrow{\sim}\ModSpace{\Sigma, V}{G}$ we get a bivector
	field $(\Psi_{\Gamma})_{*} \pi^{\Gamma}_{\mathrm{FR}}$ on the moduli space $\ModSpace{\Sigma, V}{G}$. 
	The following theorem then follows from the work of Fock and Rosly \cite{FockRosly}, see also \cite{AKSM, MassuyeauTuraev2012, Nie2013,LBSQuilted}.

	\begin{remark}\label{rmk:noninvertible_even}
		To define $\pi^\Gamma$, we don't have to start with an invariant pairing; a non-necessarily-invertible element $s\in (\mathrm{Sym}^{2}(\g))^{\g}$ is sufficient. Theorem \ref{thm:FR} below holds in this case as well (as noticed in e.g. \cite{LBSQuilted}). See also Remark \ref{rmk:noninvertible_odd}.
	\end{remark}
	
	\begin{theorem}\label{thm:FR}
		The bivector $\pi_{\mathrm{FR}}:=(\Psi_{\Gamma})_{*} \pi^{\Gamma}_{\mathrm{FR}}$ on $\ModSpace{\Sigma, V}{G}$ does not depend on the choice of the skeleton $\Gamma$ and is invariant under the $G$-action $\rho_{p}$ on $\ModSpace{\Sigma, V}{G}$ for each $p\in V$.
	
		Moreover, the Schouten bracket of $\pi_\mathrm{FR}$ satisfies
		\[ [\pi_\mathrm{FR}, \pi_\mathrm{FR}]/2 = \sum_{p\in V} \rho_{p}(\phi), \]
		where $\phi$ acts as a trivector field on the moduli space, using the $\g$-action $\rho_{p}$.
		In other words, $\pi_\mathrm{FR}$ is a $\g^{V}$-quasi-Poisson bivector on the moduli space  \cite{AKS,LBSQuilted}.
	\end{theorem}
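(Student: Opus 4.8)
The statement has two parts: invariance (independence of the skeleton and of the gauge actions $\rho_p$) and the computation of the self-bracket. For the invariance, the plan is to invoke Proposition \ref{prop:2Dtopology} and check that $\pi^\Gamma_{\mathrm{FR}}$ is unchanged under each of the three moves. Isotopy changes nothing, since \eqref{eq:FRbivector} depends only on the combinatorics of $\Gamma$ together with the orientation-induced order of $\he(p)$. For an edge reversion I would use that the operators $(x)_a$ of \eqref{eq:LAaction} are built to be equivariant under the inversion $g\mapsto g^{-1}$ of the corresponding factor, since this inversion sends $x^L$ to $-x^R$; reversing an edge therefore merely relabels its two half-edges and leaves each summand fixed. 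The one substantial move is the slide: there the identification $\Psi_\Gamma$ changes by right/left translation of one holonomy by a neighbouring one, so the vector field attached to the slid half-edge is conjugated by an $\Ad$, and the equality $s^{ij}(e_i)_a\wedge(e_j)_b=s^{ij}(\Ad e_i)_a\wedge(\Ad e_j)_b$ is exactly the invariance $(\Ad_g\otimes\Ad_g)\,s=s$. Invariance under $\rho_p$ is of the same nature: since $[(x)_c,(e_i)_a]=\delta_{ca}([x,e_i])_a$, the Lie derivative $\mathcal L_{\rho_p(x)}$ acts on the two half-edges of each $p$-summand by the adjoint action and annihilates $s$ by invariance, while summands at other vertices are untouched because a shared edge is acted on from opposite sides, where $x^L$ and $y^R$ commute.

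For the self-bracket the key observation, which I would establish first, is the commutation rule $[(x)_a,(y)_b]=\delta_{ab}\,([x,y])_a$: two half-edge operators commute unless they are literally the same half-edge, because distinct half-edges act either on different factors of $G^{\times N}$ or on the same factor from opposite sides. Writing $\pi^\Gamma_{\mathrm{FR}}=\sum_p\pi_p$ with $\pi_p=\tfrac12\sum_{a<b\in\he(p)}s^{ij}(e_i)_a\wedge(e_j)_b$, this rule has an immediate and crucial consequence: since half-edges at distinct vertices are distinct, every vector-field bracket entering $[\pi_p,\pi_q]$ for $p\neq q$ vanishes, so the self-bracket localizes as $[\pi_{\mathrm{FR}},\pi_{\mathrm{FR}}]=\sum_p[\pi_p,\pi_p]$.

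The heart of the proof is then the per-vertex identity $\tfrac12[\pi_p,\pi_p]=\rho_p(\phi)-\sum_{a\in\he(p)}\tfrac16\phi^{ijk}(e_i)_a\wedge(e_j)_a\wedge(e_k)_a$, i.e. the Schouten self-bracket reproduces $\rho_p(\phi)=\tfrac16\phi^{ijk}\rho_p(e_i)\wedge\rho_p(e_j)\wedge\rho_p(e_k)$ with $\rho_p(e_i)=\sum_{a\in\he(p)}(e_i)_a$, except for the diagonal terms in which all three half-edges coincide. Expanding $[\pi_p,\pi_p]$ with the Schouten rule, a summand on $\{a,b\}$ and one on $\{c,d\}$ contribute only through a single nonzero vector-field bracket, which forces one half-edge of the first to equal one of the second; the result is a trivector on the two remaining half-edges together with the shared one, that is, precisely a term $(e_i)_a\wedge(e_j)_b\wedge(e_k)_c$ with $a,b,c$ not all equal. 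To see that the coefficients assemble into $\phi^{ijk}$ I would use that $\phi^{ijk}$ is a multiple of the structure constants with all indices raised by $s$, which is totally antisymmetric exactly because $s$ is invariant: for three pairwise distinct half-edges the three pairs of summands sharing one half-edge combine, via this antisymmetry and the linear order on $\he(p)$, into the single antisymmetric coefficient $\phi^{ijk}$, and the terms with two coinciding half-edges match in the same way. The sign- and order-bookkeeping across these sharing patterns is the step I expect to be the main obstacle.

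It then remains to account for the diagonal terms. For a single half-edge $a$ the expression $\tfrac16\phi^{ijk}(e_i)_a\wedge(e_j)_a\wedge(e_k)_a$ is the image of the Cartan element $\phi$ under the regular action on one factor, namely $e_i\mapsto e_i^L$ at an outgoing end and $e_i\mapsto -e_i^R$ at an incoming end; for the two ends of one edge these two images are the left- and the (negated) right-invariant extensions of $\phi$, which agree by $\Ad$-invariance of $\phi$ and hence cancel. Summing the per-vertex identities over $p$, the diagonal terms therefore cancel edge by edge, and I obtain $\tfrac12[\pi_{\mathrm{FR}},\pi_{\mathrm{FR}}]=\sum_p\rho_p(\phi)$, the claimed $\g^V$-quasi-Poisson property. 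A more conceptual alternative would build $\pi_{\mathrm{FR}}$ by iterated fusion of the one-edge bivectors and invoke the behaviour of the quasi-Poisson anomaly under fusion from \cite{AKSM}; I would keep the direct computation as the primary argument, since it is self-contained, and use fusion only as a consistency check.
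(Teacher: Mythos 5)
Your overall route is the paper's own: you localize the Schouten bracket at the vertices via the commutation rule $[(x)_a,(y)_b]=\delta_{ab}([x,y])_a$, reduce the per-vertex computation to Drinfeld--Kohno-type relations between $s$-terms and $\phi$-terms (this is exactly Proposition \ref{prop:sproperties}, proved in Appendix \ref{app:evenLie}), cancel the diagonal terms $\tilde\phi_{aaa}$ edge by edge using $\Ad$-invariance of $\phi$, and reduce skeleton-independence to edge reversions and slides via Proposition \ref{prop:2Dtopology}. The quasi-Poisson identity, the $\rho_p$-invariance, and the edge-reversion argument are all correct and coincide with the paper's proof.

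The gap is in the slide move, in two respects. First, a slide is not a single $\Ad$-conjugation of the slid half-edge: with labels as in Figure \ref{fig:FRlabelled_slides_slide}, the pushforward $\Phi_*$ sends some half-edge actions to \emph{sums}, e.g. $\Phi_*((x)_a)=(x)_{a'}+(\Ad_{g_2}x)_{c'}$ and $\Phi_*((x)_d)=(x)_{d'}+(x)_{b'}$, so $\pi^{\Gamma}_{\mathrm{FR}}$ and $\pi^{\Gamma'}_{\mathrm{FR}}$ do not correspond term by term; one must regroup several terms, and invariance of $s$ enters through the identity \eqref{eq:stransf}, which also produces a term $s_{\Ad_{g_2}c',-\Ad_{g_2}c'}$ that vanishes only because $s$ is even (it is precisely the term that survives in the odd case). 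This is repairable along your lines, but it is more than the one-line equality you state. Second, and more seriously, Proposition \ref{prop:2Dtopology} allows slides in which the marked points of Figure \ref{fig:moves} coincide, i.e.\ one of the two edges involved is a loop; your local picture (distinct vertices, one conjugated half-edge) does not cover this case, where the linear orders on the half-edge stars merge and extra cross terms appear in $\pi_p$. The paper closes this case by a separate device, fusion (Proposition \ref{prop:fusioneven}): any skeleton with a loop is obtained by fusing two marked points, and the extra fusion term $\tfrac12 s^{ij}\rho_{p_1}(e_i)\wedge\rho_{p_2}(e_j)$ is independent of $\Gamma$, so slide-invariance for loops follows from slide-invariance on the unfused surface. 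You invoke fusion only as an optional consistency check for the anomaly; in fact it (or an equivalent direct computation with coinciding vertices) is needed to complete the skeleton-independence argument.
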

	\begin{remark}
		Fock and Rosly also fix a classical $r$-matrix, i.e. an element
		$\Lambda \in \bigwedge^{2} \g$ satisfying $[\Lambda, \Lambda]/2 = -\phi$. 
		Then, the bivector $\pi_\mathrm{FR} + \sum_{p\in V}\rho_{p}(\Lambda)$ is Poisson, see Eq.~(4.8) and Prop~4.1 of \cite{FockRosly}. 
        The independence of this Poisson structure on the skeleton is contained in \cite[Prop.~4.2]{FockRosly}, introducing special moves connecting skeletons (such as slide moves in the present work). The proof is omitted as a straightforward calculation in both cases in \cite{FockRosly}.
		
		A different way to obtain a Poisson structure is to look at the character variety $\ModSpace{\Sigma, V}{G}/G^{V}$, i.e. the moduli space of flat $G$-connections on $\Sigma$, with no marked points (see e.g. \cite[Prop.~4.3]{FockRosly}).
 	\end{remark}

 	We will now reprove this theorem, since a similar proof will be used in the Section \ref{sec:qBVFR}.

	\subsection{Proof of Theorem \ref{thm:FR}}\label{sec:qBVFRproof}
	Denote $$\g^{\he(p)} = \bigoplus_{a \in \he{(p)}}\g$$ the Lie algebra acting at each
	marked point. Let $\iota_{a} \colon \g \to \g^{\he(p)}$ be the inclusion associated with 
	half-edge $a$.
	Recall that the Lie algebra actions $x \mapsto (x)_{a}$ extend to a morphism
	from $\bigwedge \g^{\he{(p)}}$ to multivector fields on $G^{\times N} $,
	compatible with the wedge product and the Schouten 
	brackets\footnote{i.e. a morphism of Gerstenhaber algebras}.  For example, the element
	$\iota_{a}(x)\in \g^{\he(p)} \subset \bigwedge \g^{\he(p)}$ is sent to 
	the  vector field $(x)_{a} \in \mathfrak X (G^{\times N})$

	Moreover, if $a, b$ are two half-edges, we can define  
	$\tilde{s}_{ab} = s^{ij} \iota_{a}(e_{i})\wedge \iota_{b}(e_{j})$,
	so that the bivector $\pi^{\Gamma}_{\mathrm{FR}}$ is given by the image of 
	$$\sum_{p\in V} \sum_{\substack{a, b \in \he(p) \\ a<b }} \frac12 \tilde{s}_{ab}
	 \in \bigwedge \left( \bigoplus_{p\in V}\g^{\he(p)} \right).$$

	Similarly, we define $\tilde{\phi}_{abc} = \frac 1 {24} f^{ijk} \iota_a(e_i) \wedge \iota_b(e_j) \wedge \iota_c(e_k )$,
	where $\phi = \frac 1 {24} f^{ijk} e_{i} \wedge e_{j} \wedge e_{k} \in \bigwedge^{3}\g $ uses the antisymmetric
	Cartan tensor $f^{ijk} = f_{xy}^{i}s^{xj}s^{yk}$ defined using the structure constants $[e_{x}, e_{y}] = f^i_{xy} e_i$ of $\g$. These elements satisfy a version of the Drinfeld-Kohno relations under the Schouten bracket, see Proposition \ref{prop:sproperties} in the Appendix \ref{app:evenLie}. Now we can prove the theorem, mostly on the level of $\bigwedge \g^{\he{(p)}}$.
\begin{proof}[Proof of Theorem \ref{thm:FR}]
	From the invariance of the inner product it follows that  $[(x)_{a} + 
	(x)_{b}, \tilde{s}_{ab}] = 0$ for each $a, b$ at $p$, and thus $\tilde{s}_{ab}$
	is invariant.
	\medskip{}

	To prove $[\pi^{\Gamma}_{\mathrm{FR}}, \pi^{\Gamma}_{\mathrm{FR}}]/2 = \sum_{p}\rho(\phi)_{p}$,
	we first look at each vertex $p$ separately.
	The action of $\phi$ at a vertex $p$ is given by the action of
	\[ \sum_{a, b, c\in \he(p)}\tilde\phi_{abc} = \sum_{a \in \mathrm{he}(p)} \tilde\phi_{aaa} 
	+ 3\!\!\!\!\!\!\sum_{a, b \in \mathrm{he}(p), a<b}\!\!\!\! \left( \tilde\phi_{aab} + \tilde\phi_{abb}\right) 
	+ 6\!\!\!\!\!\!\!\!\!\!\sum_{a, b, c \in \mathrm{he}(p), a<b<c} \!\!\!\!\!\!\!\!\tilde\phi_{abc}\,.\]
	Using Proposition \ref{prop:sproperties}, we get that the term
	$[\frac12\tilde{s}_{ab}, \frac12\tilde{s}_{ab}]/2$ will cancel the term 
	$3(\tilde\phi_{aab} + \tilde\phi_{abb})$ and that $6\tilde\phi_{abc}$
	cancels with 
	\[ [\tfrac12\tilde{s}_{ab}, \tfrac12\tilde{s}_{bc}] + [\tfrac12\tilde{s}_{ab}, \tfrac12\tilde{s}_{ac}] + [ \tfrac12\tilde{s}_{bc}, \tfrac12\tilde{s}_{ac} ] = -6\tilde{\phi}_{abc} + 6\tilde{\phi}_{bac} + 6\tilde{\phi}_{bca}\,. \]

	Finally, for a path $\gamma$ with half-edges $a$ and $b$, one has
	$x_{a} = -(\Ad_{\mathrm{hol}_{\gamma}}x)_{b}$ and using the invariance of $\phi$, the 
	term $\tilde{\phi}_{aaa}$ thus cancels with $\tilde{\phi}_{bbb}$.
	\medskip{}

\begin{figure}
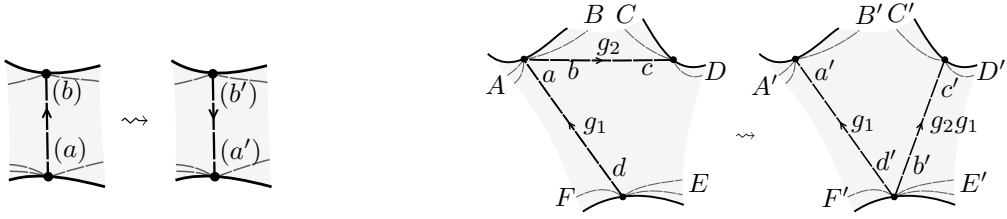

	\centering
	\begin{subfigure}[b]{0.3\textwidth}
		\begin{center}
			{\footnotesize \includesvg{images/inverselabelled.svg}}
			\vspace*{8mm}
		\end{center}
		\caption{Half-edge actions $a$ and $b$ get mapped to $a'$ and $b'$.}
		\label{fig:FRlabelled_slides_inverse}
	\end{subfigure}
	\qquad  
	\begin{subfigure}[b]{0.62\textwidth}
		\begin{center}
			{\footnotesize \includesvg[scale=0.7]{images/slidelabelled.svg}}
		\end{center}
		\caption{The capital letters can correspond to more half-edges, the action $x_{A}$, for $x\in \g$,
			is given as a sum of individual half-edge actions.}
		\label{fig:FRlabelled_slides_slide}
	\end{subfigure}
	\caption{Choice of names for half-edges. The orientation of the surface is counter-clockwise.}\label{fig:FRlabelled_slides}
\end{figure}

	To show that $\pi_{\mathrm{FR}}$ is independent of $\Gamma$, it is enough
	to show that the diffeomorphism $\Phi = \Psi_{\Gamma'}^{-1}\Psi_{\Gamma}$ 
	sends $\pi^{\Gamma}_{\mathrm{FR}}$ to $\pi^{\Gamma'}_{\mathrm{FR}}$. 
	Moreover, we just need to check this on edge reversions 
	and slides of skeletons, c.f. Proposition \ref{prop:2Dtopology}.
	The isomorphism $\Phi\colon G^{\mathrm{edges}(\Gamma)} \to G^{\mathrm{edges}(\Gamma')}$,
	corresponding to the change of edge orientation, satisfies
	$$\Phi_*((x)_{a}) = (x)_{a'} \qquad \text{and} \qquad \Phi_*((x)_{b}) = (x)_{b'}\,,$$
	with labels for half-edge as in Figure \ref{fig:FRlabelled_slides_inverse}. This is because
	$\mathrm{Inv} \colon g \mapsto g^{-1}$ satisfies $\mathrm{Inv}_{*}(x^{\mathrm L}) = - x^{\mathrm R}$. 
	Thus, $\Phi_*$ relates the two bivector fields associated to $\Gamma$ and $\Gamma'$, because
	$\Phi_{*}(s_{ab}) = s_{a'b'}$ and the two bivector fields are equal term-by-term.
	\medskip
	
	For the case of the slide move of Figure \ref{fig:slide}, we denote 
	the half-edge as on Figure \ref{fig:FRlabelled_slides_slide}.
	The diffeomorphism $\Phi: G^{\mathrm{edges}(\Gamma)} \to G^{\mathrm{edges}(\Gamma')}$ 
	is in this case (on the relevant holonomies) given as $(g_{1}, g_{2})\mapsto (g_{1}, g_{2}g_{1})$.
	Thus, on vector fields it gives
	\begin{align*}
	\Phi_*((x)_{a}) &= (x)_{a'} + (\Ad_{g_2}(x))_{c'}\\
	\Phi_*((x)_{b}) &= - (\Ad_{g_2}x)_{c'} = (\Ad_{g_1^{-1}} x)_{b'} \\
	\Phi_*((x)_{c}) &= (x)_{c'} \\
	\Phi_*((x)_{d}) &= (x)_{d'} + (x)_{b'} \\
	\Phi_*((x)_{X}) &= (x)_{X'} \quad \text{ for any uppercase $X$} 
	\end{align*}
	The relevant part (dropping terms containing only $s_{XY}$ for $X, Y$ uppercase) of the quasi-Poisson bivector field for the original graph is
	\[ s_{A(a+b)} + s_{(a+b)B} + s_{ab} + s_{Cc} + s_{cD} + s_{Ed} + s_{dF}  \]
	and under $\Phi_*$, it gets mapped to
	\[  s_{A'a'} + s_{a'B'} + s_{(a'+\Ad_{g_2}c')(-\Ad_{g_2} c')} + s_{C'c'} + s_{c'D'}+ s_{E'(b'+d')} + s_{(b'+d')F'}. \]
	Using $(x)_{a'} = - (\Ad^{-1}_{g_1}(x))_{d'}$ and properties of $s$, we get 
	\begin{equation}\label{eq:stransf} s_{(a'+\Ad_{g_2}c')(-\Ad_{g_2} c')} 
	=  s_{a'(\Ad^{-1}_{g_1} b')} = s_{(\Ad_{g_1}a')b'} = - s_{d'b'} = s_{b'd'}\,.
	\end{equation}
	which gives the quasi-Poisson bivector field $\pi^{\Gamma'}_{\mathrm{FR}}$.
	Note the term $s_{\Ad_{g_2}c', -\Ad_{g_2}c'}$ which is zero; it will be
	non-trivial in the definition of the quasi-BV operator. 
	\medskip

	We also need to consider a 
	case where some of the marked points on Figure \ref{fig:moves} coincide. 
	In other words, it might happen that one of the two edges involved in a slide move is a loop. Instead of repeating the above calculation, we recall the so-called fusion procedure.
	
	\begin{figure}[h]
		\centering
		\captionsetup{width=0.7\linewidth}
		{\includesvg{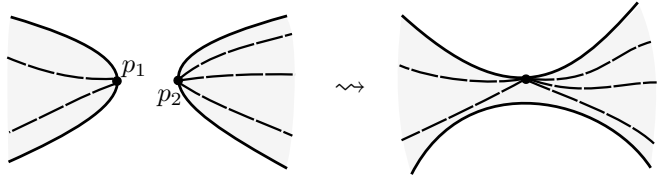}}
		\caption{Fusion at vertices $p_1$ and $p_2$. Orientation of $\Sigma$
		determines the position of the new point.}
		\label{fig:FRfusion}
	\end{figure}
	Fusion of a surface at two points $p_1$ and $p_2$ is given by a 
	corner-connected sum of the (possibly disconnected) surface, with the two marked points
	replaced by one point as on Figure \ref{fig:FRfusion}. If we start with a surface
	with a skeleton $\Gamma$, the fused surface also has a natural skeleton $\Gamma_{\mathrm{fused}}$, and conversely
	any skeleton with a loop can be obtained by fusion which creates that loop.
	This becomes evident when the surface is seen as a fattening of its skeleton,
	as on Figure \ref{fig:FusionFRLoop}.
	
	\begin{figure}[h]
	\centering
	\includesvg[scale=1]{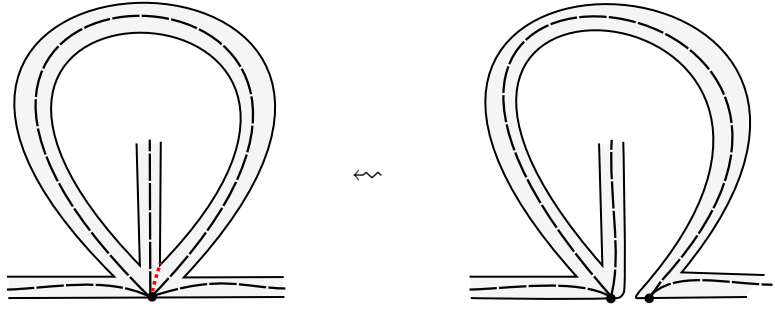}
	\caption{A loop in a skeleton can be obtained via fusion.}
	\label{fig:FusionFRLoop}
	\end{figure}
	
	 As manifolds,
	the two moduli spaces are the same, however, the bivector 
	on the fused surface has an additional term, given by
 \begin{equation*} \pi^{\Gamma_{\mathrm{fused}}}_{\mathrm{FR}} 
	 = \pi^{\Gamma}_{\mathrm{FR}} + \frac 12 s^{ij} \rho_{p_{1}}(e_{i})\wedge \rho_{p_{2}}(e_{j})\,. \end{equation*}
  Here $\rho_{p}$ denotes the $\g$-action at the vertex $p$, i.e. $\rho_{p}(x) = \sum_{a\in\he(p)}(x)_{a}$.
	If $\Gamma$ and $\Gamma'$ are two skeletons related by slide not involving loops, we know the corresponding bivector fields
	$(\Psi_{\Gamma})_{*} \pi^{\Gamma}_\mathrm{FR}$ and $(\Psi_{\Gamma'})_{*} \pi^{\Gamma'}_\mathrm{FR}$ 
	are equal for the unfused surface. 	However, the fusion term 
	\[ \frac 12 s^{ij} \rho_{p_{1}}(e_{i})\wedge \rho_{p_{2}}(e_{j})\]
	is independent of $\Gamma$. Thus, the bivector fields  $(\Psi_{\Gamma_{\mathrm{fused}}})_{*} \pi^{\Gamma_{\mathrm{fused}}}_\mathrm{FR}$ and $\pi^{\Gamma'_{\mathrm{fused}}}_\mathrm{FR}$ are also equal, where now $\Gamma_{\mathrm{fused}}$ and $\Gamma'_\mathrm{fused}$ are related by a slide involving a loop.
 \end{proof}

\subsection{Goldman Lie algebra} \label{ssec:Goldman}
If we choose an $\Ad$-invariant function $f$ on $G$ , there is a distinguished set of functions $f_{|\gamma|}\colon [\rho] \mapsto f(\rho(\gamma))$ on the moduli space $\ModSpace{\Sigma}{G}$, where $|\gamma|$ is a free homotopy class of loops\footnote{A free homotopy class a loop is a map $S^1 \to \Sigma$, with two such maps identified if they can be extended to a map from the cylinder $S^1\times [0,1]$. Equivalently, $\pi_1^\text{free}(\Sigma)$ is the set of conjugacy classes of $\pi_1(\Sigma)$, and furthermore $\Goldman{\Sigma} \cong \mathbb R \pi_1(\Sigma)/[\mathbb R \pi_1(\Sigma), \mathbb R \pi_1(\Sigma)]$.} on $\Sigma$ and $\gamma$ its arbitrarily chosen representative in $\pi_1(\Sigma)$. In many cases, the Poisson bracket of these functions was described by Goldman \cite{Goldman}; let us recall the case of $GL_n(\mathbb R)$ with $f=\tr$.

Denote by $\Goldman{\Sigma} = \mathbb R \pi_1^\text{free}(\Sigma)$ the vector space generated by free homotopy classes of loops  on $\Sigma$, and by $[-, -]_\mathrm{G}$ the Goldman Lie bracket on $\Goldman{\Sigma}$, given by resolving intersections of loops (see \cite[Section~3.13]{Goldman} or Section \ref{ssec:queerGT} below for details).
\begin{theorem}[{\cite{Goldman}}]\label{thm:Goldman}
	Let $G = GL_n(\mathbb R)$ and let $$\egm \colon \Goldman{\Sigma} \to C^{\infty}(\ModSpace{\Sigma}{GL(n)}), \; \egm_{|\gamma|} ([\rho]) := \tr_{|\gamma|}([\rho])=\tr(\rho(\gamma)).$$
 Then $\egm$ is well defined and  $$\egm_{[|\gamma_1|, |\gamma_2|]_\mathrm{G}} = \{ \egm_{|\gamma_1|}, \egm_{|\gamma_2|} \}_{\mathrm{AB}},$$ i.e. $\egm$ is a map of Lie algebras. Here, $\{ \egm_{|\gamma_1|}, \egm_{|\gamma_2|} \}_{\mathrm{AB}}$ denotes the Poisson bracket induced by $\pi_\mathrm{FR}$.
\end{theorem}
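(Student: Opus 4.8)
The plan is to compute the bracket directly on $\ModSpace{\Sigma, V}{G}$ using a skeleton and then descend. First, for well-definedness: $\tr(\rho(\gamma))$ is invariant under conjugation, so it is independent of the chosen lift $\rho$ and of the basepoint, depends only on the free homotopy class $|\gamma|$, and is manifestly smooth. Since $\pi_\mathrm{FR}$ is $G^V$-invariant (Theorem \ref{thm:FR}) and $\egm_{|\gamma|}$ is the pullback of a function along $\ModSpace{\Sigma, V}{G}\to\ModSpace{\Sigma}{G}$, I may compute $\{\egm_{|\gamma_1|},\egm_{|\gamma_2|}\}$ upstairs with $\pi^\Gamma_\mathrm{FR}$ for any convenient skeleton $\Gamma$, the answer being $\Gamma$-independent. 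I then view $\Sigma$ as the fattening of $\Gamma$, so that each free homotopy class is a closed edge-walk and the two loops meet transversally precisely at the vertices through which both pass, with a genuine intersection exactly when their half-edges interleave in the cyclic order at that vertex.

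The engine is two elementary identities. First, the derivative of a trace function along a half-edge action: if a strand of $\gamma$ passes through $p$ and we cut $\gamma$ there to read its holonomy as a based element $h=\hol(\gamma)\in G$, then $(x)_a\,\egm_{|\gamma|}=\tr(xh)$ if $a$ is the incoming half-edge of that strand and $(x)_a\,\egm_{|\gamma|}=-\tr(xh)$ if $a$ is outgoing; this follows from $\hol\mapsto e^{tx}\hol$ (resp.\ $\hol\,e^{-tx}$) together with conjugation-invariance of $\tr$. Second, the reproducing identity for the trace form on $\mathfrak{gl}_n$: writing $s=s^{ij}e_i\otimes e_j$ for the inverse of $\langle x,y\rangle=\tr(xy)$, one has $s^{ij}\tr(xe_i)\tr(ye_j)=\tr(xy)$, which is what makes the trace reproduce the connected sum.

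Now I localize $\pi^\Gamma_\mathrm{FR}=\sum_{p}\sum_{a<b}\frac12 s^{ij}(e_i)_a\wedge(e_j)_b$: the only surviving terms in $\{\egm_{|\gamma_1|},\egm_{|\gamma_2|}\}$ pair a half-edge carrying $\gamma_1$ with one carrying $\gamma_2$ at a common vertex $p$. For a single transverse passage of each loop through $p$, I insert the derivative formula, contract with the reproducing identity, and sum over the ordered pairs among the four relevant half-edges. A direct check of the four orderings shows that the coefficient of $\tr(h_1h_2)=\egm_{|\gamma_1\cdot_p\gamma_2|}$ is $+1$, $-1$, or $0$ according to whether the half-edges of $\gamma_1$ and $\gamma_2$ interleave positively, interleave negatively, or do not interleave, i.e.\ exactly the intersection sign $\epsilon_p$; in particular non-crossing configurations contribute nothing. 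Summing over all vertices and all pairs of passages then reproduces $\sum_{p\in\gamma_1\cap\gamma_2}\epsilon_p\,\egm_{|\gamma_1\cdot_p\gamma_2|}=\egm_{[|\gamma_1|,|\gamma_2|]_\mathrm{G}}$.

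The algebraic core is painless once the reproducing identity is in hand, so the main obstacle is geometric bookkeeping. I must argue that, after isotoping the loops to be taut with respect to the skeleton, the combinatorial interleavings at vertices are in orientation-preserving bijection with the transverse intersection points of $|\gamma_1|$ and $|\gamma_2|$, and that the based holonomy obtained by cutting and concatenating at an interleaving genuinely represents the oriented connected sum $\gamma_1\cdot_p\gamma_2$ with the claimed sign. Care is also needed when a loop traverses a vertex several times (one sums over passages, each giving a conjugate and hence equal-trace based holonomy) and to confirm that shared edges produce no spurious terms.
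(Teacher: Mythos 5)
Your argument is sound, but it is not the paper's route for this statement: the paper does not reprove Theorem \ref{thm:Goldman} at all --- it recalls it as Goldman's result \cite{Goldman}, and in the proof of the extension to $\Goldman{\Sigma}\oplus H_1(\Sigma,\mathbb R)$ that follows, it deliberately avoids the combinatorial formula \eqref{eq:FRbivector} and instead invokes Goldman's \emph{product formula} for brackets of $\Ad$-invariant functions, $\{f_{|\gamma_1|},f'_{|\gamma_2|}\}_{\mathrm{FR}} = \sum_{p\in\gamma_1\cap\gamma_2}\pm\, s^{ij}\,\partial_{t_1}|_0 f((A_1)_p e^{t_1 e_i})\,\partial_{t_2}|_0 f'((A_2)_p e^{t_2 e_j})$, as a citation-level input. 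What you propose is essentially a from-scratch proof of that product formula specialized to $f=f'=\tr$: localize $\pi^{\Gamma}_{\mathrm{FR}}$ at the vertices of a skeleton, differentiate the trace along half-edges, and contract with the reproducing identity $s^{ij}\tr(xe_i)\tr(ye_j)=\tr(xy)$. Your sign count is correct --- per passage of the two strands through a vertex, the four ordered pairs of half-edges contribute $\tfrac12(+1-1-1+1)=0$ when non-interleaved and $\tfrac12(\pm 2)=\pm 1$ when interleaved, reproducing $\epsilon_p$ --- and the conventions of \eqref{eq:LAaction} only flip both legs simultaneously, so the net coefficient is convention-independent. This is exactly the even-case shadow of the argument the paper \emph{does} write out, but only in the odd setting: Theorem \ref{thm:geometricBV} (curves deformed into a fattening of $\Gamma$ so that all intersections occur near marked points, the ``highway'' of Figure \ref{fig:Highway}, with non-crossing configurations cancelling), combined with the trace-contraction step that appears for $Q(n)$ in Theorem \ref{thm:holonomyisBVmap}. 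The trade-off: the paper's product-formula route works uniformly for arbitrary invariant functions (which is what its $\log\det$ extension needs) but outsources the hard step to the literature; your route is self-contained and tied directly to \eqref{eq:FRbivector}, but the geometric bookkeeping you flag --- transversality after fattening, the interleaving/intersection-sign bijection, multiple passages through a vertex, and shared edges --- is genuine work, and it is precisely where the paper's odd-case proof spends its effort (the highway reduction, plus the Reidemeister-invariance of Proposition \ref{prop:reidemeister} to see that the answer is independent of the chosen taut representatives); carrying those steps out, e.g.\ using Proposition \ref{prop:2Dtopology} and the RII/RIII-type cancellations, is what would make your sketch a complete proof.
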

We will describe a slight generalization of Goldman's theorem. Our motivation is to also capture the determinant, and we will extend the Goldman Lie algebra by the first homology $H_1(\Sigma)$ to achieve this. 
\begin{definition}\label{def:Goldmandet}
	On $\Goldman{\Sigma} \oplus H_1(\Sigma, \mathbb R)$, extend the Goldman Lie bracket by
	\begin{align*}
	[|\gamma|, a]_\mathrm{G} 		&:= \langle [\gamma], a\rangle |\gamma|, \\
	[a, b]_\mathrm{G}				&:= \langle a, b \rangle \bigcirc,
	\end{align*}
	where $|\gamma| \in \pi_1^\text{free}(\Sigma)$, $a, b\in H_1(\Sigma, \mathbb R)$, $\langle a,b\rangle$ is the intersection pairing on $H_1(\Sigma, \mathbb R)$, $[\gamma]\in H_1(\Sigma, \mathbb R)$ is the homology class given by the free homotopy class $|\gamma|$ and $\bigcirc\in \Goldman{\Sigma}$ is the homotopy class of the constant loop.
\end{definition}


It is straightforward to check that the above bracket on $\Goldman{\Sigma} \oplus H_1(\Sigma, \mathbb R)$ satisfies the Jacobi identity.
Let us now also extend the map $ \egm \colon \Goldman{\Sigma} \to C^{\infty}(\ModSpace{\Sigma}{G})$. 
\begin{definition}
	Let $G = GL_n(\mathbb R)^+$, the connected component of the identity.
	Define $\egmex \colon \Goldman{\Sigma} \oplus H_1(\Sigma, \mathbb R) \to C^{\infty}(\ModSpace{\Sigma}{G})$ by
	\begin{align*}
		\egmex_\gamma([\rho]) &= \tr (\rho(\gamma)), \\
		\egmex_a ([\rho]) &=  \log \det (\rho(\gamma_a)),
	\end{align*}
	where $|\gamma|\in \pi_1^\text{free}(\Sigma)$ and $\gamma_a\in\pi_{1}(\Sigma)$ is a representative of $a\in H_1(\Sigma)\cong \pi_{1}(\Sigma)^\mathrm{ab}$. 
\end{definition}

\begin{proposition}
	The map $\egmex$ is well defined.
\end{proposition}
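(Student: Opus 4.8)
The plan is to verify well-definedness separately for the trace part $\egmex_{|\gamma|}$ and the logarithmic part $\egmex_a$, each of which requires two things: independence of the chosen representative, and invariance under the overall conjugation (gauge) action that defines the moduli space $\ModSpace{\Sigma}{G}$.

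For the trace part I would simply reuse the argument already contained in Theorem \ref{thm:Goldman}. Since $\tr$ is conjugation-invariant, the identity $\tr(\rho(g\gamma g^{-1})) = \tr(\rho(\gamma))$ shows that $\tr(\rho(\gamma))$ depends only on the conjugacy class of $\gamma$ in $\pi_1(\Sigma)$, i.e. on the free homotopy class $|\gamma|$; the same invariance under $\rho \mapsto h\rho h^{-1}$ shows the value is unchanged along gauge orbits. Hence $\egmex_{|\gamma|}$ descends to a well-defined (and, since $\tr$ is polynomial, smooth) function on $\ModSpace{\Sigma}{G}$.

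For the logarithmic part, the key observation, and the precise reason for restricting to $G = GL_n(\mathbb R)^+$, is that every element of $G$ has strictly positive determinant, so $\log\det \colon G \to (\mathbb R, +)$ is a well-defined, single-valued, smooth group homomorphism. Consequently, for a fixed $\rho$ the assignment $\gamma \mapsto \log\det(\rho(\gamma))$ is a group homomorphism $\pi_1(\Sigma) \to \mathbb R$. As $\mathbb R$ is abelian, this homomorphism factors through the abelianization $\pi_1(\Sigma)^{\mathrm{ab}} \cong H_1(\Sigma, \mathbb Z)$ and then extends uniquely $\mathbb R$-linearly to $H_1(\Sigma, \mathbb R) = H_1(\Sigma, \mathbb Z)\otimes \mathbb R$; this gives independence of the choice of representative $\gamma_a$ together with linearity in $a$. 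Gauge invariance is immediate from $\det(h\rho(\gamma_a)h^{-1}) = \det(\rho(\gamma_a))$, and smoothness follows because $\det$ is polynomial while $\log$ is smooth on $\mathbb R_{>0}$ and $\det(\rho(\gamma_a))$ stays positive.

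I expect the only genuinely delicate point to be the single-valuedness of the logarithm, which is exactly what the restriction to the identity component $GL_n(\mathbb R)^+$ guarantees: over the full group $GL_n(\mathbb R)$ the determinant could be negative and no continuous real branch of $\log\det$ would exist. By contrast, the passage from integral to real coefficients is routine once the homomorphism property is in hand, since any homomorphism $H_1(\Sigma, \mathbb Z) \to \mathbb R$ extends uniquely to the real vector space $H_1(\Sigma, \mathbb R)$.
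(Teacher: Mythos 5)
Your proposal is correct and follows essentially the same route as the paper: the paper checks representative-independence by noting that two representatives of $a$ differ by a product of commutators, whose determinant is $1$ (your ``homomorphism to an abelian group factors through the abelianization'' is the same fact packaged structurally), and then extends to $H_1(\Sigma,\mathbb R)$ by additivity of $\log\det$, exactly as you do. Your additional checks (single-valuedness of $\log\det$ on $GL_n(\mathbb R)^+$, gauge invariance, and the trace part) are left implicit in the paper but are consistent with it.
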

\begin{proof}
	We need to check that two different representatives $\gamma_a, \gamma_a'$ of $a$ give the same function. This follows from the fact that $\gamma_a = \gamma_a' C$, where $C$ is a product of commutators, and the determinant of a commutator is equal to $1$.

    We can extend $\egmex$ from $H_1(\Sigma, \mathbb Z)$ to  $H_1(\Sigma, \mathbb R)$, since $\egmex$ is a map of abelian groups. This follows from the fact that $\gamma_a\gamma_b$ is a representative for $a+b$, and $\log\det \hol_{\gamma_a\gamma_b} = \log \det \hol_{\gamma_a} + \log \det \hol_{\gamma_b}$.
\end{proof}
\begin{theorem}
	The map $\egmex \colon \Goldman{\Sigma} \oplus H_1(\Sigma, \mathbb R) \to C^{\infty}(\ModSpace{\Sigma}{G})$ is a morphism of Lie algebras.
\end{theorem}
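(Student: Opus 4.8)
The plan is to verify the identity $\{\egmex_x, \egmex_y\}_{\mathrm{FR}} = \egmex_{[x,y]_\mathrm{G}}$ on pairs of generators, using bilinearity to split into three cases according to whether $x$ and $y$ lie in $\Goldman{\Sigma}$ or in $H_1(\Sigma,\mathbb R)$. Since $\egmex$ restricted to $\Goldman{\Sigma}$ is exactly $\egm$, the loop--loop case $x=|\gamma_1|,\ y=|\gamma_2|$ is precisely Goldman's Theorem \ref{thm:Goldman}. Only the two new cases remain: loop--homology and homology--homology. For both I would invoke Goldman's general variational formula \cite{Goldman} for the Fock--Rosly bracket of functions attached to loops.

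Recall that to an invariant function $f$ on $G$ one attaches its \emph{variation} $F_f\colon G\to\g$, characterized by $\langle F_f(g), X\rangle = \dtzero f(\exp(tX)g)$, where $\langle X,Y\rangle = \tr(XY)$ is the pairing giving $s$. Goldman's formula then reads
\[ \{f_{|\alpha|}, g_{|\beta|}\}_{\mathrm{FR}}([\rho]) = \sum_{p\in\alpha\cap\beta}\epsilon_p\,\langle F_f(\rho(\alpha_p)),\, F_g(\rho(\beta_p))\rangle, \]
with $\alpha_p,\beta_p$ the loops $\alpha,\beta$ based at the intersection point $p$. The first step is the elementary computation of the two variations needed here: differentiating $\tr$ gives $F_{\tr}(g)=g$ (as an element of $\mathfrak{gl}_n$), while differentiating $\log\det$ gives the \emph{constant, central} value $F_{\log\det}(g)=\mathrm{Id}$.

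The centrality of $F_{\log\det}$ is what makes the remaining cases collapse. In the loop--homology case take $\alpha$ representing $|\gamma|$ and $\beta=\gamma_a$ representing $a\in H_1(\Sigma)$. Since $F_{\log\det}\equiv\mathrm{Id}$ is $\Ad$-invariant, the basepoint dependence of $\rho(\alpha_p)$ (only up to conjugation) drops out of the pairing, and each term equals $\langle\rho(\alpha_p),\mathrm{Id}\rangle=\tr(\rho(\gamma))$, so
\[ \{\egmex_{|\gamma|}, \egmex_a\}_{\mathrm{FR}} = \Big(\sum_{p\in\gamma\cap\gamma_a}\epsilon_p\Big)\,\tr(\rho(\gamma)) = \langle[\gamma],a\rangle\,\egmex_{|\gamma|}, \]
using that the signed intersection count is the homological intersection pairing; this is exactly $\egmex_{[|\gamma|,a]_\mathrm{G}}$. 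In the homology--homology case both variations are $\mathrm{Id}$, so every term is $\langle\mathrm{Id},\mathrm{Id}\rangle=\tr(\mathrm{Id})=n$ and
\[ \{\egmex_a, \egmex_b\}_{\mathrm{FR}} = \Big(\sum_p\epsilon_p\Big)\,n = \langle a,b\rangle\,n = \langle a,b\rangle\,\egmex_{\bigcirc}, \]
which matches $\egmex_{[a,b]_\mathrm{G}}$ since $\egmex_{\bigcirc}=\tr(\rho(\bigcirc))=\tr(\mathrm{Id})=n$. Antisymmetry of both sides handles the opposite orderings.

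I expect the only real difficulties to be bookkeeping rather than conceptual: first, justifying the basepoint/conjugation reduction in the variational formula, which here is painless precisely because at least one variation is central; and second, fixing orientation and sign conventions so that $\sum_p\epsilon_p$ agrees with the intersection pairing $\langle\cdot,\cdot\rangle$ of Definition \ref{def:Goldmandet}. Well-definedness on $H_1$ is already secured by the preceding proposition. As a consistency check one could avoid Goldman's formula and compute both brackets directly from the Fock--Rosly bivector \eqref{eq:FRbivector} on a skeleton adapted to $\gamma$ and $\gamma_a$: contracting the factor $s^{ij}(e_i)_a\wedge(e_j)_b$ against $\log\det$, whose left and right derivatives are the constant $\mathrm{Id}$, reproduces the same intersection-number prefactors.
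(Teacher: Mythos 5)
Your proposal is correct and follows essentially the same route as the paper: the paper likewise reduces to Goldman's product (variational) formula for the Fock--Rosly bracket, observes that the loop--loop case is Goldman's theorem, and handles the two new cases by the computation $\partial_t|_0\log\det(A_p e^{tE_{(\alpha\beta)}})=\delta_{\alpha\beta}$ — i.e.\ exactly your statement that the variation of $\log\det$ is the constant central element $\mathrm{Id}$ — yielding $\langle[\gamma],a\rangle\,\tr_{|\gamma|}$ and $\langle a,b\rangle\,n=\langle a,b\rangle\,\tr_{\bigcirc}$. Your invariant formulation via $F_f\colon G\to\g$ is just a coordinate-free rephrasing of the paper's elementary-matrix calculation, so there is no substantive difference.
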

\begin{proof}
	Instead of Equation \eqref{eq:FRbivector}, it is more convenient to use a description of the Poisson structure on $\ModSpace{\Sigma}{G}$ as in \cite[p.~265, \textit{Product formula}]{Goldman}. Namely, if $f, f'$ are two $\Ad$-invariant functions on $G$, then\footnote{See also \cite[Prop.~4.3]{FockRosly}, \cite[Comment~18]{Lawton2008}, \cite[Theorem~2.5]{Nie2013} and \cite[Prop.~4]{LBSQuilted} for various versions of this claim; we prove a similar statement in the odd case in Theorem \ref{thm:geometricBV}. It is not difficult to check that our conventions for $\pi_{\mathrm{FR}}$ do indeed match that for the product formula above, by considering e.g. the 1-punctured torus.}
	\[ \{f_{|\gamma_1|}, f'_{|\gamma_2|} \}_\mathrm{AB} = \!\sum_{p \in \gamma_1\cap\gamma_2}\!\!\!\pm s^{ij} \partial_{t_1}|_0 f((A_1)_p e^{t_1 e_i})\partial_{t_2}|_0 f'((A_2)_p e^{t_2 e_j}),\]
	where the sum is over all (transverse double) intersections of $\gamma_1$ and $\gamma_2$, and $(A_1)_p$, $(A_2)_p$ are holonomies along $\gamma_1$, $\gamma_2$ starting at $p$. The sign is given by the orientation of $((\dot{\gamma_1})_p, (\dot{\gamma_2})_p)$ relative to the orientation of $\Sigma$.
	
	Let us only do the calculation for the simpler case of $GL_n(\mathbb R)$. The basis is given by elementary matrices $E_{(\alpha\beta)}, \alpha, \beta= 1\dots n$, and $s = \sum_{\alpha, \beta} E_{(\alpha\beta)} \otimes E_{(\beta\alpha)}$. If $f= \tr$, we have
	\[  \partial_{t}|_0 \tr(A_pe^{t E_{(\alpha\beta)}}) = \tr(A_p E_{(\alpha\beta)}) = (A_p)_{\beta\alpha}, \]
	while for $f = \log\det$, we have
	\begin{equation}\label{eq:derlogdet}  \partial_{t}|_0 \log\det(A_p e^{t E_{(\alpha\beta)}}) =  \partial_{t}|_0 \log\det(e^{t E_{(\alpha\beta)}}) = \tr(E_{(\alpha\beta)}) = \delta_{\alpha, \beta}. \end{equation}	Thus, using the product formula, we get
	\begin{align*} \{\tr_{|\gamma|}, {\log\det}_{|\gamma_a|} \}_\mathrm{AB} =& \!\!\sum_{p \in \gamma\cap\gamma_a} \!\!\!\pm (A_p)_{\beta\alpha} \delta_{\beta,\alpha} \\=& \!\!\sum_{p \in \gamma\cap\gamma_a} \!\!\!\pm \tr_{|\gamma|} = \langle [\gamma], a\rangle \tr_{|\gamma|} \end{align*}
	and	
	\[ \{{\log \det}_{|\gamma_a|}, {\log \det}_{|\gamma_b|} \}_\mathrm{AB} = \!\sum_{p \in \gamma_a\cap\gamma_b} \!\!\!\pm  \delta_{\alpha, \beta} \delta_{\beta,\alpha} = \langle a, b \rangle n =  \langle a, b \rangle \tr_{\bigcirc}.\]
\end{proof}
\begin{remark}
	For simplicity, we described the Goldman theorem as stated in \cite{Goldman}, without marked points on $\Sigma$. See \cite{MassuyeauTuraev2012} and \cite{Nie2013} for a version with marked points.
\end{remark}
\begin{remark}
	One can consider the group $GL_n(\mathbb C)$ as well. Then, seen as a real group, the above theorem holds with $f=2 \operatorname{Re} \tr$ and with $\operatorname{Re}\log\det$ instead of $\log\det$. 
	
	Alternatively, one can define a holomorphic bivector field as in Equation \eqref{eq:FRbivector}, using left-invariant holomorphic vector fields. Taking $f= \tr$ and replacing $GL_n(\mathbb C)$ with its universal cover to define $\log\det$, the above theorem then holds as well.
\end{remark}

\section{BV operators on moduli spaces}
In this section, we prove an analogue of Theorem \ref{thm:FR} for Lie supergroups equipped with an odd pairing on their Lie algebras.

\subsection{Lie superalgebras with an odd pairing}To get a Batalin-Vilkovisky structure on the moduli space, we will use a 
Lie superalgebra $\g$ with an odd invariant pairing. 
\begin{definition}\label{def:oddinvariantpairing}
	If $\g$ is a Lie superalgebra, an odd invariant pairing is a graded-symmetric,
	non-degenerate odd map $\langle\,,\,\rangle \colon \g\otimes \g \to \fld$ satisfying
	\[  \langle[x, y], z\rangle + (-1)^{\hdeg x \hdeg y } \langle y, [x, z] \rangle=0 , \quad \forall x, y, z \in \g . \]
	A Lie superalgebra with such pairing is called an \emph{odd metric Lie salgebra}.
	This pairing defines a $\g$-equivariant isomorphism\footnote{$\Pi$ denotes the one-dimensional super vector space $\mathbb R^{0|1}$ concentrated in the odd degree.} $t^{\flat}\colon 	\Pi \ot \g \to \g^{*}$ by $t^{\flat}(\Pi \ot x)(y) = \langle x, y \rangle$, 
	whose inverse we will denote $t^{\#}$.
	
	The (odd version of the) Cartan element is defined\footnote{The sign $(-1)^{\beta}$ is because the odd
	map $t^{\#}$ passes through it.} as
	$$ \phi(\alpha, \beta, \gamma) = (-1)^{\hdeg \beta} \frac 1 {24}\alpha( [t^{\#}\beta, t^{\#}\gamma] ), $$
	where $\alpha, \beta, \gamma \in \g^{*}$.
	
	Finally, define an odd element $\nu\in \g$ by
	\[ \langle \nu, x \rangle = \mathrm{str}_{\g}\ad_{x}, \forall x \in \g. \]
\end{definition} 
	Recall that a Lie (super)algebra is called unimodular 
	if $\ad_{x}\colon \g \to \g$ is traceless for all $x\in \g$. By the 
	previous definition, a unimodularity of $\g$ is equivalent to vanishing of $\nu$. For more details on $t$, $\phi$ and their coordinate expressions, see Appendix \ref{app:oddLie}

\subsection{BV structure on the moduli of flat connections} \label{sec:qBVFR}
Let us now turn our attentions to moduli spaces of flat $G$-connections, with
$G$ a supergroup (see \cite{Varadarajan2004} and \cite{DeligneQuantumFields1999}).
\begin{definition}\label{def:oddmodulispace}
	Let $\Sigma$ be a compact, oriented surface with
	boundary and $V\subset \partial \Sigma$ a finite, non-empty set of marked points. 
	For a supergroup $G$, the moduli space of flat $G$-connections on $(\Sigma, V)$ is the supermanifold
\begin{equation}
	\ModSpace{\Sigma, V}{G} = \Hom_{SGrpd}(\Pi_{1}(\Sigma, V), G)\,.
\end{equation}
\end{definition}

There is a natural action of $G^{V}$ on this space, let us denote by $\rho_{p}(g)$ and
$\rho_{p}(x)$ the Lie group and Lie algebra actions of the $p$th factor.
In this section, we will define a second-order differential operator on $\ModSpace{\Sigma, V}{G}$
that will turn this space into a so-called $\g^{V}$-quasi BV manifold.
\begin{definition}\label{def:qBVmfld}
	A supermanifold $M$ with an action of an odd metric Lie algebra $\g$ is called
	$\g$-quasi-BV if it is equipped with an odd, second-order, $\g$-invariant differential operator $\Delta$ satisfying 
	\[ \Delta(1) = 0 \quad \text{and} \quad \Delta^{2} = \phi\,, \]
	where $\phi$ acts on $M$ as an element of $U{\g}$.
\end{definition}
If we choose a skeleton $\Gamma$ of $\Sigma$ with edges $\gamma_{1}, \dots, \gamma_{N}$,
we get an isomorphism
$$\Psi_{\Gamma}\colon G^{\times N}   \xrightarrow{\sim}  \ModSpace{\Sigma, V}{G} .$$ 
This way, the abstract space of all maps of groupoids gets a concrete
description, which we will mostly use.\footnote{The functor from supermanifolds
to sets, given by
\[ X \mapsto \mathrm{Hom}_{\mathrm{Groupoid}} ( \Pi_{1}(\Sigma, V), \mathrm{Hom}_{\mathrm{SuperMfld}}(X, G) ) \]
is representable by each of these moduli spaces constructed using $\Gamma$.
This gives an alternative definition of the moduli space $\ModSpace{\Sigma, V}{G}$.}

As in Section \ref{sec:qBVFR}, can define, for any half-edge $a$, an action
of the Lie algebra $\g$, denoted by $(x)_{a}$ for $x \in \g$, by the 
equation \eqref{eq:LAaction}, i.e. left-invariant or minus of the right invariant vector field on the
corresponding component of $G^{V}$.
\medskip{}

To define the quasi BV operator, we also need to fix a foliation of $\Sigma${}
as in Definition \ref{def:foliation}, i.e. we require that the foliation is tangent
to the boundary at the marked points. Let us also choose $\Gamma$ such that its edges $\gamma_{i}$ are transverse to the foliation at the marked points.

If $\g$ has odd invariant pairing as in Definition \ref{def:oddinvariantpairing}, denote by $t^{ij}$ the matrix inverse to 
the matrix of the pairing $t_{ij} = \langle e_{i}, e_{j}\rangle$ for a homogeneous basis $e_{i}$
of $\g$. Then we define
\begin{equation}\label{eq:qbvdef}
	\Delta^{\Gamma} = \sum_{p\in V}\sum_{\substack{a, b \in \he(p) \\ a<b }}  
	\frac 12 (-1)^{\hdeg{e_{i}}} t^{ij} (e_{i})_{a} (e_{j})_{b}
	+ \sum_{\gamma\in\Edges{\Gamma}} \frac 12 \mathrm{rot}_{\gamma} \cdot (\nu)_{a_\gamma}
\end{equation}
where $a_\gamma$ is the outgoing half-edge of $\gamma$.
\begin{remark}
	If $\g$ is unimodular, the second term of $\Delta^{\Gamma}$ is zero
	and we don't need the foliation of the surface. However, if $\g$ is not unimodular,
	just the first term of $\Delta^{\Gamma}$ would not be invariant under 
	slide moves, as we will see below.
\end{remark}

\begin{remark}\label{rmk:noninvertible_odd}
	As in the even case (see Remark \ref{rmk:noninvertible_even}), the definition of $\Delta^\Gamma$ and Theorem \ref{thm:qBVFR} below are valid also in the case
	one starts with $\g$ and an element $t = (-1)^{\hdeg{e_i}}t^{ij} e_i \wedge e_j \in(\bigwedge^{2}\g)^{\g}$, i.e.
	$t$ doesn't have to be the inverse of an odd, nondegenerate pairing on $\g$. 
\end{remark}

\begin{theorem}\label{thm:qBVFR}
	The operator  $\Delta :=(\Psi_{\Gamma})_{*}\Delta^{\Gamma}$ on $\ModSpace{\Sigma, V}{G}$ is independent of 
	the choice of $\Gamma$ compatible with the foliation and does not change
	under homotopy of the foliation. 

	Futhermore, $\Delta$ satisfies all the properties of Definition \ref{def:qBVmfld}
	and thus equips $\ModSpace{\Sigma, V}{G}$ with a structure of a $\g^{V}$-quasi BV
	manifold.
\end{theorem}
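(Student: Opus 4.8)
The plan is to mirror the proof of Theorem~\ref{thm:FR} as closely as possible, carrying out the computation on the level of $\bigwedge\g^{\he(p)}$ (or rather the associative algebra $U\g^{\he(p)}$, since $\Delta$ is a second-order operator rather than a bivector). I would first introduce the abstract lift $\tilde t_{ab} = (-1)^{\hdeg{e_i}}t^{ij}\iota_a(e_i)\iota_b(e_j)$, so that the first term of $\Delta^\Gamma$ is the image of $\sum_{p}\sum_{a<b}\tfrac12\tilde t_{ab}$ under the action map, and record the analogue of Proposition~\ref{prop:sproperties} for the odd element $t$ and the odd Cartan element $\phi$ (this is the content flagged in Appendix~\ref{app:oddLie}). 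The $\g$-invariance of $\Delta$ is immediate from invariance of the pairing: $[(x)_a+(x)_b,\tilde t_{ab}]=0$ by the defining identity of an odd invariant pairing, just as in the even case.

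\textbf{Verifying $\Delta^2=\phi$.} Next I would compute $\Delta^2$ at a single vertex $p$, again splitting $\sum_{a,b,c\in\he(p)}\tilde\phi_{abc}$ into the diagonal terms $\tilde\phi_{aaa}$, the partially-coincident terms $\tilde\phi_{aab}+\tilde\phi_{abb}$, and the ordered terms $\tilde\phi_{abc}$ with $a<b<c$, exactly as in the displayed decomposition in the proof of Theorem~\ref{thm:FR}. The crucial difference is bookkeeping of Koszul signs: since $t$ is odd and $\Delta$ is a second-order differential operator, squaring produces not a Schouten bracket but products of the $(e_i)_a$, and the cross terms $t_{ab}t_{bc}$ etc.\ must be reorganized using the Drinfeld--Kohno-type relations for $t$ and the invariance of $\phi$. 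I expect the $\tilde\phi_{aab}+\tilde\phi_{abb}$ contributions to be matched by $(\tfrac12\tilde t_{ab})^2$ and the $6\tilde\phi_{abc}$ by the three mixed products, as before. The genuinely new point is the diagonal term $\tilde\phi_{aaa}$: in the even case it cancels between the two half-edges $a,b$ of a single path $\gamma$ via $x_a=-(\Ad_{\hol_\gamma}x)_b$, but in the odd, non-unimodular setting this cancellation is obstructed by the trace of $\ad$, which is precisely what the element $\nu$ and the rotation-number term $\tfrac12\mathrm{rot}_\gamma\,(\nu)_{a_\gamma}$ are designed to absorb.

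\textbf{Independence of $\Gamma$ and homotopy-invariance of the foliation.} Using Proposition~\ref{prop:2Dtopology} it suffices to check invariance under edge reversions and slides. For edge reversion I would use $\mathrm{Inv}_*(x^{\mathrm L})=-x^{\mathrm R}$ together with $\mathrm{rot}_{\gamma^{-1}}=-\mathrm{rot}_\gamma$, so that both the $t$-term and the $\nu$-term transform correctly. For a slide I would transport $\Delta^\Gamma$ along $\Phi_*$ using the same push-forward formulas $\Phi_*((x)_a)=(x)_{a'}+(\Ad_{g_2}x)_{c'}$, etc.\ The decisive computation is the analogue of Equation~\eqref{eq:stransf}: whereas there the term $s_{\Ad_{g_2}c',-\Ad_{g_2}c'}$ vanished, here the corresponding ``diagonal'' term $t_{\Ad_{g_2}c',\,-\Ad_{g_2}c'}$ is \emph{nonzero} and produces exactly a $(\nu)_{c'}$ contribution, which must be matched against the change in $\mathrm{rot}$ of the slid edge (recall Figures~\ref{fig:comprot1} and~\ref{fig:comprot2}, where composition changes the rotation number by $\pm\tfrac12$). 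This is the step the remark preceding the theorem is warning about, and it is where the foliation enters essentially. Homotopy-invariance of the foliation then follows because $\mathrm{rot}_\gamma$ depends only on the homotopy class of the compatible foliation.

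\textbf{The main obstacle.} The hard part will be controlling the $\nu$-terms and the associated rotation numbers coherently throughout the slide move, including the case where one of the edges is a loop. As in the even case I would handle loops via the fusion procedure, establishing an odd analogue of Proposition~\ref{prop:fusioneven} in which fusion at $p_1,p_2$ adds a term $\tfrac12(-1)^{\hdeg{e_i}}t^{ij}\rho_{p_1}(e_i)\rho_{p_2}(e_j)$; since this fusion term is manifestly independent of $\Gamma$, combining it with the non-loop case closes the argument. Verifying $\Delta^2=\phi$ as an operator identity (second-order operator squaring to a first-order one) with all Koszul signs correct, and reconciling the $\nu$/$\mathrm{rot}$ bookkeeping under slides, is where I expect essentially all the real work to lie.
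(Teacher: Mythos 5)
Your overall strategy is exactly the paper's: lift everything to $U\g^{\he(p)}$, invoke the odd analogue of Proposition~\ref{prop:sproperties} (this is Proposition~\ref{prop:tproperties}), prove $\g^V$-invariance from invariance of $t$ and $\nu$, handle edge reversion via $\mathrm{Inv}_*(x^{\mathrm L})=-x^{\mathrm R}$ together with $\mathrm{rot}_{\gamma^{-1}}=-\mathrm{rot}_\gamma$, handle slides by observing that the diagonal term $t_{\Ad_{g_2}c',\,-\Ad_{g_2}c'}$ no longer vanishes but produces a $\nu$-contribution matched against the $\pm\tfrac12$ jump in rotation number of the composed path, and handle loops by the odd fusion statement (Proposition~\ref{prop:oddfusion}). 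All of that agrees with the paper's proof.

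There is, however, one genuinely incorrect claim: your account of the $\Delta^2=\phi$ step. You assert that the cancellation of the diagonal terms $\tilde\phi_{aaa}$ is ``obstructed by the trace of $\ad$'' in the non-unimodular case, and that the term $\tfrac12\mathrm{rot}_\gamma\,(\nu)_{a_\gamma}$ is designed to absorb this obstruction. Both halves of this are wrong. First, $\phi$ is $\Ad$-invariant for \emph{any} odd metric Lie superalgebra, unimodular or not (Proposition~\ref{prop:oddmetric}), so the cancellation of $\tilde\phi_{aaa}$ against $\tilde\phi_{bbb}$ via $x_a=-(\Ad_{\hol_\gamma}x)_b$ goes through verbatim as in the even case. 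Second, the $\nu$-terms cannot enter $\Delta^2$ at all: $\nu$ is central and $G$-invariant, so the vector fields $(\nu)_{a_\gamma}$ commute with every $(e_i)_a$ and with each other, hence drop out of $[\Delta,\Delta]/2$ entirely --- this is precisely the paper's remark that ``since $\nu$ is central, it does not enter into the calculation of $\Delta^2$.'' Indeed, if the rotation term did contribute to $\Delta^2$, the square would depend on the choice of foliation, whereas $\sum_p\rho_p(\phi)$ manifestly does not; so the mechanism you propose could not be consistent. The sole role of the $\nu$/rotation term is the one you correctly identify in your slide paragraph: restoring independence of $\Gamma$ (and of edge orientation) when $\g$ is not unimodular. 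Your proof would still close once the computation is actually run --- the spurious obstruction simply is not there --- but the role you assign to the rotation term in the $\Delta^2$ step should be corrected.
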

The proof will be similar to the proof in Section \ref{sec:qBVFRproof}. Now,
instead of multivector fields, we work with differential operators, and
thus the Gerstenhaber algebra $\bigwedge \g^{\he (p)}$ will be replaced by
the associative algebra $U\g^{\he(p)}$. 
\begin{proof}
Let us define $\tilde{t}_{ab}\in U\g^{\he (p)}$ by $(-1)^{\hdeg{e_{i}}} t^{ij} 
\iota_{a}(e_{i}) \iota_{b}(e_{j})$, where $\iota_{a}\colon \g \to \g^{\he(p)}$ 
is the inclusion into the $a$th copy.
Similarly, $\tilde\phi_{abc} = \phi^{xyz} \iota_{a}(e_{x})\iota_{b}(e_{y})\iota_{c}(e_{z})$ and $\tilde\nu_a = \iota_a(\nu)$ (see Proposition
\ref{prop:oddmetric} for the definition of $\phi^{xyz}$). The operator $\Delta^\Gamma$ can be then written as the action of
\[ 	\Delta^{\Gamma} = \sum_{p\in V}\sum_{\substack{a, b \in \he(p) \\ a<b }}  
	\frac 12 \tilde{t}_{ab}
	+  \sum_{\gamma\in\Edges{\Gamma}} \frac 12 \mathrm{rot}_{\gamma} \cdot \tilde\nu_{a_\gamma}.
	\]
These elements have properties similar to those of $\tilde{s}_{ab}$ and $\tilde{\phi}_{abc}$ 
in the even case, which are collected in Proposition \ref{prop:tproperties} in Appendix \ref{app:oddLie}. Using this result, we can just follow the proof of Theorem \ref{thm:FR} without many modifications.
For example, the invariance of $\Delta$ w.r.t. the action of $\g^{V}$ follows from the invariance
of $\tilde{t}$ and $\tilde\nu$. Since $\nu$ is central, it does not enter into the calculation of 
$\Delta^{2} = [\Delta, \Delta]/2$ and we can repeat the arguments of Section
\ref{sec:qBVFRproof} verbatim. 

The invariance of $\Delta$ under edge inversion is as before, using
$\mathrm{Inv}_{*}( (x)^{\mathrm L} ) = - (x)^{\mathrm R}$. The additional term $\nu_{a} = \nu^{L}$,
 is sent to $\mathrm{Inv}_{*}(\nu^{L}) = - \nu^{R} = - \nu^{L}$,
since $\nu$ is $G$-invariant. This minus sign is canceled by $\mathrm{rot}_{\gamma^{-1}} = -\mathrm{rot}_{\gamma}$.

For the slide,
the formulas expressing the action of $\Phi = \Psi^{-1}_{\Gamma'}\Psi_{\Gamma}$
on the vector fields are still correct.
However, the term $-\frac 12 t_{ \Ad_{g_{2}}(c'), \Ad_{g_{2}}(c')}$ from equation \eqref{eq:stransf} is not zero, but gives
$-\tilde{\nu}_{c'}/4 = \tilde{\nu}_{b'}/4$. 	
This counters the discrepancy between
	\[  \Phi_{*} \left(\frac{\mathrm{rot}_{\gamma_{1}}}{2}\nu_{d} + \frac{\mathrm{rot}_{\gamma_{2}}}{2}\nu_{b}\right) 
	=  \frac{\mathrm{rot}_{\gamma_{1}}}{2}\nu_{d'} + \frac{\mathrm{rot}_{\gamma_{1}}}{2}\nu_{b'} + \frac{\mathrm{rot}_{\gamma_{2}}}{2}\nu_{b'}\]
	and 
	\[  \frac{\mathrm{rot}_{\gamma_{1}}}{2}\nu_{d'} + \frac{\mathrm{rot}_{\gamma_{2}\gamma_{1}}}{2}\nu_{b'} 
	= \frac{\mathrm{rot}_{\gamma_{1}}}{2}\nu_{d'} + \frac{\mathrm{rot}_{\gamma_{1}}+ \mathrm{rot}_{\gamma_{2}} + \frac 12}{2}\nu_{b'}\,. \]  
where $\gamma_{i}$ is the path with holonomy $g_{i}$ in Figure \ref{fig:FRlabelled_slides_slide}. Here, we used that the composition in Figure \ref{fig:FRlabelled_slides_slide} agrees with the convention in Figure \ref{fig:comprot}, i.e. \eqref{eq:comprot} holds.

Finally, the fusion works as before. The foliation on the fused surface is extended as on Figure
\ref{fig:foliatedfusion}.
	\begin{figure}[h]
		\centering
		{\includesvg{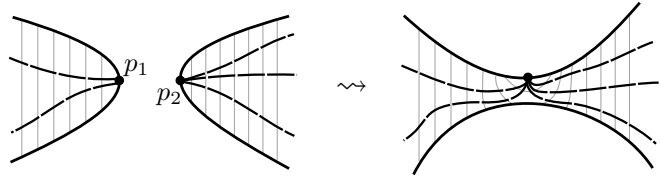}}
		\caption{Fusion of a foliated surface at vertices $p_1$ and $p_2$.}
		\label{fig:foliatedfusion}
	\end{figure}
The paths acquire no additional rotation with respect to this new foliation, so 
we again have that
\begin{equation} \label{eq:fusionBV} \Delta^{\Gamma_{\mathrm{fused}}}
	 = \Delta^{\Gamma} + \frac 12 (-1)^{\hdeg{e_{i}}} t^{ij} \rho_{p_{1}}(e_{i})\rho_{p_{2}}(e_{j})\,, \end{equation}
	where $\rho_{p}$ denotes the $\g$-action at the 
	vertex $p$, i.e. $\rho_{p}(x) = \sum_{a\in\he(p)}(x)_{a}$.

Then if we have a loop in $\Gamma$, we can always see the surface as a fusion
of a different surface, where the loop is split. Moreover, the surface retracts to a thickening of its 
skeleton, on which the foliation is as on  Figure \ref{fig:FusionBVLoop} (see Appendix \ref{app:foliation}). Thus, the foliation on
the fused surface can be obtained from the foliation of the unfused surface by
deformation, with rotation numbers unchanged.

\begin{figure}[h]
	\centering
	\includesvg[scale=1.2]{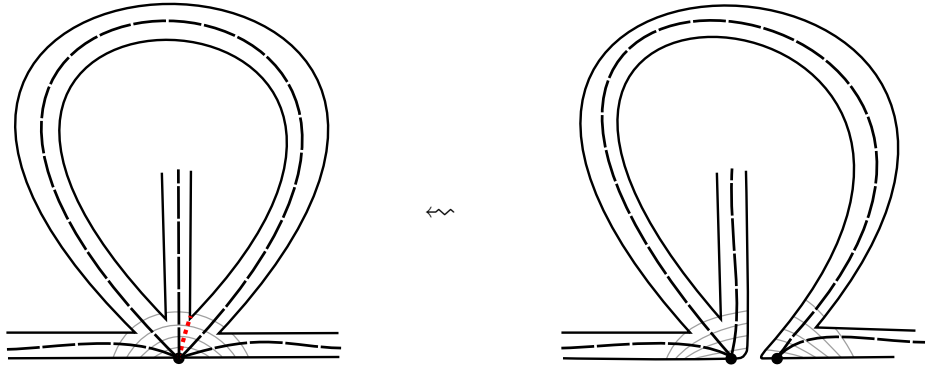}
	\caption{A loop in a skeleton of a foliated surface can be obtained via fusion.}
	\label{fig:FusionBVLoop}
\end{figure}

If a slide move contains a loop, we have that operators $\Delta^{\Gamma_\mathrm{unfused}}$
and $\Delta^{\Gamma'_\mathrm{unfused}}$ give the same $\Delta$  on the moduli space of the 
unfused surface, and the additional term from fusion does not depend on $\Gamma$.
\end{proof}
For more details on fusion of quasi-BV manifolds, see Appendix \ref{app:fusion}.

\newpage

\section[Topological interpretation of Delta]{Topological interpretation of $\Delta$}
 In this section, we give a topological interpretation of the operator $\Delta$, in terms of chords at intersections of loops on the surface.

\subsection{Curves and chords}
We start by introducing a class of functions on the moduli space, given
by evaluating functions on $G$ on holonomies along paths in $\Sigma$ and their derivatives using \emph{chords}.
Such functions appeared before in \cite{AMR}, we present a version
adapted to supergroups.

First, the case without chords is simply given by assigning holonomies to 
paths in $\Sigma$. For any path $\gamma$ on $\Sigma$ connecting two points
of $V$, we have a map $\hol_{\gamma}\colon \ModSpace{\Sigma, V}{G} \to G$ giving the holonomy
along $\gamma$. This map, by definition, depends only on the homotopy class of
  $\gamma$ fixing the endpoints.
\begin{definition}
	Let $\gamma_{1}, \dots, \gamma_{k}$ be $k$ paths between points of $V$.
	Then we define 
	$$\hol_{\gamma_{1}, \dots, \gamma_{k}} \colon \mathcal{O}(G^{\times k}) \to \mathcal {O} (\ModSpace{\Sigma, V}{G})$$
	as the pullback along the map $\ModSpace{\Sigma, V}{G} \to G^{\times k}$ given 
	by the product of maps $\hol_{\gamma_{i}}\colon \ModSpace{\Sigma, V}{G} \to G$.
\end{definition}
This map only depends on the homotopy class of the paths\footnote{By such homotopy
we mean a continuous  map $[0, 1]\times ([0, 1]^{\sqcup k}) \to \Sigma$
fixing the endpoints of the $k$ paths.} $\gamma_{i}$. 
If we choose a skeleton, with $N$ edges, we get a map 
$\mathcal O (G^{\times k}) \to \mathcal O (G^{\times N})$, as illustrated
on the following example.

\begin{example}
	If $\gamma$ is the boundary loop and $e_{1}, e_{2}$ are the two generators 
	of the fundamental group as on the Figure \ref{fig:TorusPath}, then we have $\gamma = e_{2}^{-1}e_{1}^{-1}e_{2}e_{1}$
	and thus the map $\hol_{\gamma}$ is given by the diagram on Figure \ref{fig:TorusPathDiag}.

\begin{figure}
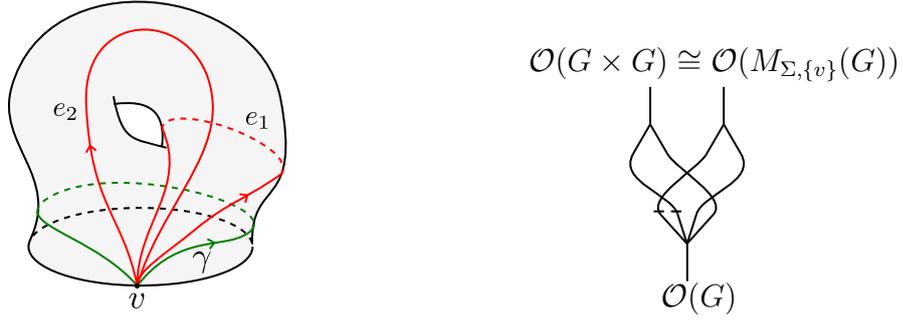

	\centering
	\begin{subfigure}[b]{0.4\textwidth}
		\begin{center}
			{\includesvg{images/torusboundary.svg}}
		\end{center}
		\caption{A punctured torus with a loop.}
		\label{fig:TorusPath}
	\end{subfigure}
	\qquad  
	\begin{subfigure}[b]{0.4\textwidth}
		\begin{center}
			{ \includesvg{images/torusboundarydiag.svg}}	
		\end{center}
		\caption{The corresponding map $\hol_{\gamma}$.}
		\label{fig:TorusPathDiag}
	\end{subfigure}
	\caption{Interpreting loops on a surface as functions on the moduli space.}
\end{figure}
	
	There are two ways to read this diagram, dual to each other.   
	From top to bottom, it is built from structure maps of a (super)group,
	i.e. the two strands correspond to the two copies of $G$, they are followed
	by two diagonal maps $G\to G\times G$ and the group multiplication in the correct order,
	with bars signifying inverses.
	\medskip
	
	From bottom to top, it can be seen as a diagram in the symmetric monoidal category of 
	vector superspaces.  It is built from
	structure maps of the (super) Hopf algebra $\mathcal O(G)$, i.e. an iterated coproduct,
	followed by the antipode (the bar), symmetry and the product of the Hopf algebra.
	Denoting the coproduct, antipode, symmetry and product by $\square, S, \tau$ and $m$, 
	this can be written as
	\[ (m \otimes m) \circ (1 \otimes \tau \otimes 1) \circ (\tau \otimes \tau) \circ (S^{\otimes 2}\otimes 1^{\otimes 2}) \circ \square^{(4)} \colon \mathcal O(G) \to \mathcal O(G) \otimes \mathcal O(G), \]
	where $\square^{(4)} =(\square \otimes 1^{\otimes 1}) \circ (\square \otimes 1) \circ \square$.	
\end{example}

Let us now assume that the Lie algebra $\g$ of $G$ is odd metric.
Then we can define functions assigned to paths with one chord, by acting 
with $t\in \g\otimes \g$ at the chord endpoints on the outgoing half-edges.

\begin{definition} \label{def:chord}
	Let $\gamma_{1}, \dots, \gamma_{k}$ be as before and let us choose a path $\delta$ on $\Sigma$
	connecting two points on $\gamma_{1} \sqcup \dots \sqcup \gamma_{k}$, 
	a so-called \emph{chord}.
	
	Then we define $\hol^{\delta}_{\gamma_{1}, \dots, \gamma_{k}}
	\colon \Pi\otimes\mathcal{O}( G^{\times k}) \to \mathcal {O} (\ModSpace{\Sigma, V}{G}) $
	as follows: First, we deform the paths so that the endpoints of $\delta$
	are in $V$, as on Figure \ref{fig:MoveEndpoint}
	
\begin{figure}[h]
	\centering
	\includesvg{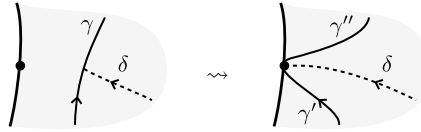}
	\caption{Moving endpoints of a chord to a marked point.}
	\label{fig:MoveEndpoint}
\end{figure}

	This gives us a map $\hol' \colon \ModSpace{\Sigma, V}{G} \to G^{\times(k+3)}$, 
	because of the additional holonomy along the chord and the two subdivisions.
	In the case when the chord connects $2$ different paths, 
	we define $\hol^{\delta}_{\gamma_{1}, \dots, \gamma_{k}}$ as on Figure \ref{fig:ChordDefOn2}.
	
\begin{figure}[h]
	\centering
	\includesvg{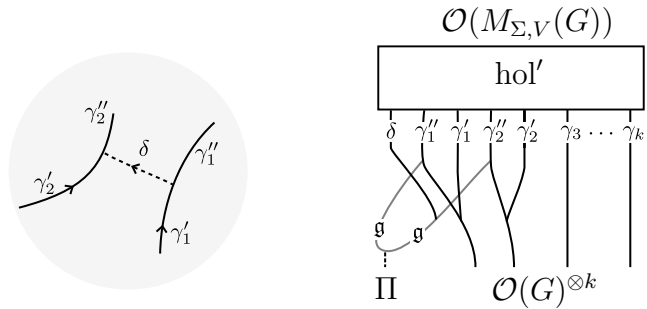}
	\caption{Definition of a chord connecting two paths.}
	\label{fig:ChordDefOn2}
\end{figure}

	If the chord lies one path, we define $\hol^{\delta}_{\gamma_{1}, \dots, \gamma_{k}}$ as on Figure \ref{fig:ChordDefOn1}.
	Finally, we require that changing the orientation of the chord is equivalent to multiplication by $-1$.

\begin{figure}[h]
	\centering
	\includesvg{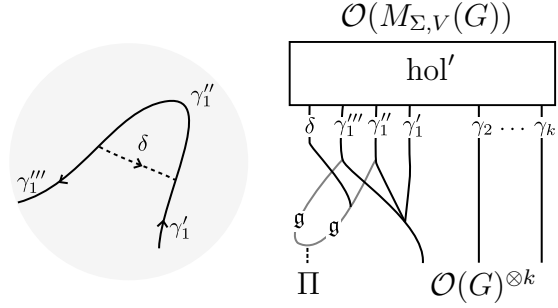}
	\caption{Definition of a chord connecting two points on one path. Oppositely oriented chord would result in $-1$ times the above diagram.}
	\label{fig:ChordDefOn1}
\end{figure}	
	
	The symbol 	\raisebox{-0.32\height}{\includesvg{images/symbolt.svg}} is the odd 
	tensor $(-1)^{\hdeg{e_{i}}}t^{ij} e_{i}\otimes e_{j} \colon \Pi \to \g \ot \g$. 
	We use the adjoint action $\g \to   \mathcal O(G)\otimes \g$ to (co)act by
	the holonomy along $\delta$. The Lie algebra action $\g \otimes \mathcal O(G)\to \mathcal O(G)$ is by left-invariant vector 
	fields.
\end{definition}

A useful rule of thumb is that we act on the half-edge of $\gamma$ leaving the chord endpoint. Intuitively, one of the endpoints of the chord is acted on by the holonomy along the chord. Specifically, note that the chord is a path lying on $\Sigma$, and chords 	with different homotopy classes will act differently.

\begin{remark}
	In the case of usual Lie algebra, one would write the first map, evaluated at $f_{1}\otimes f_{2}\in \mathcal O (G\times G)$, as
	\[ \sum_{i, j} s^{ij} \left. \frac{\partial}{\partial t_{1}}\right | _{t_{1} = 0} \hspace*{-5mm}f_{1}\left(\gamma''_{1}e^{t_{1} e_{i}} \gamma'_{1}\right) 
	\, \left.\frac{\partial}{\partial t_{2}}\right|_{t_{2} = 0} \hspace*{-5mm} f_{2}\left(\gamma''_{2}e^{t_{2} \mathrm{Ad}_{\delta}(e_{j})} \gamma'_{2}\right).\]
	where we use $\delta$ or  $\gamma$  to denote the holonomies along the respective loops
	\medskip
	
	In the graded case, the diagram should be read from the bottom to top, as a diagram in the category of supervector spaces. The grey lines represent the Lie algebra $\g$, the black lines the super Hopf algebra $\mathcal O(G)$, and the dotted line represents $\Pi = \mathbb R^{1|0}$. The adjoint action $\g \to\mathcal O(G)\otimes \g$ would correspond in the even case to the map sending $x\in \g$ to the $\g$-valued function $g\mapsto \Ad_g x$.
	
\end{remark}

\begin{proposition}
	The map $\hol^{\delta}_{\gamma_{1}, \dots, \gamma_{k}}$ only depends on the homotopy
	classes of the curves (the endpoints of the chord only move along the path they belong to),
	and is independent of how we deformed the endpoints of the chord to $V$.
	Changing the orientation of the chord multiplies the corresponding $\hol$
	function by $-1$.
\end{proposition}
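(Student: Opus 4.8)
The plan is to verify the three independent claims of the proposition---homotopy invariance, independence of the chord-endpoint deformation, and the orientation sign---by reducing each to a computation in the super Hopf algebra $\mathcal O(G)$, using the diagrammatic calculus of Definition \ref{def:chord}. First I would dispose of homotopy invariance of the curves $\gamma_1, \dots, \gamma_k$ themselves: since each $\hol_{\gamma_i}$ depends only on the homotopy class of $\gamma_i$ (fixing endpoints), and the chord acts on the outgoing half-edge at a fixed point along $\gamma_i$, a homotopy of $\gamma_i$ that does not cross the chord endpoint simply replaces one holonomy by a homotopic one, so the subdivided holonomies $\gamma_i', \gamma_i''$ change only by homotopy. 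The content here is that sliding the chord endpoint along $\gamma_i$ (while keeping it on that curve) multiplies one subpath holonomy by some $h$ and the other by $h^{-1}$; one must check that the left-invariant-vector-field action followed by reassembly via the product $m$ is unaffected. This should follow from the $\g$-equivariance of the $t$-tensor, i.e. invariance of $t \in (\bigwedge^2 \g)^\g$ under the adjoint action, exactly as in Proposition \ref{prop:tproperties}.

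Next I would treat independence of the deformation of the chord endpoints to $V$. Two different deformations moving an endpoint to a marked point differ by an additional holonomy loop, so the corresponding maps $\hol'$ factor through different subdivisions of $G^{\times(k+3)}$. The key step is that the adjoint (co)action used to transport the $t$-tensor along $\delta$ absorbs precisely this discrepancy: deforming the endpoint along the surface conjugates $\Ad_\delta$ by the extra holonomy, and invariance of $t$ guarantees the result is unchanged. I would phrase this as a commuting-diagram statement at the level of $\mathcal O(G)$, using that the coproduct $\square$, antipode $S$ and the adjoint coaction satisfy the standard Hopf-algebra compatibilities, so that the two diagrams are equal by a sequence of coassociativity and antipode moves. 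Concretely, the holonomy along the chord appears only through $\Ad$, and $(\Ad_g \otimes \Ad_g)(t) = t$ is the single algebraic input needed.

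The orientation sign is the simplest point and I would verify it last: reversing the orientation of $\delta$ swaps the two chord endpoints and replaces $\Ad_\delta$ by $\Ad_{\delta^{-1}}$, while graded-symmetry of the pairing gives $(-1)^{\hdeg{e_i}} t^{ij} e_i \otimes e_j = -\tau\bigl((-1)^{\hdeg{e_i}} t^{ij} e_i \otimes e_j\bigr)$ because $t$ is an \emph{odd} element of $\bigwedge^2 \g$; the extra minus sign from the graded antisymmetry is exactly the asserted factor of $-1$. I expect the main obstacle to be bookkeeping the Koszul signs when the odd tensor $t$ is passed through the other strands in the diagram (the factor $(-1)^\beta$ flagged in Definition \ref{def:oddinvariantpairing} reappears here), together with carefully matching the ``act on the outgoing half-edge'' convention to the left-invariant-vector-field action so that the two cases of Figures \ref{fig:ChordDefOn2} and \ref{fig:ChordDefOn1} are handled uniformly. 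Once the invariance of $t$ and the graded-symmetry identity are isolated as the two algebraic facts in play, each of the three claims reduces to a short diagrammatic manipulation, so I would present the argument as three short lemmas rather than one long computation.
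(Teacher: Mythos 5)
Your proposal is correct and follows essentially the same route as the paper: the paper likewise reduces everything to the two algebraic facts you isolate, namely $\Ad$-invariance of $t$ (the extra holonomy $g$ from a different deformation multiplies $\delta$ and conjugates the subdivided holonomies $\gamma'$, $\gamma''$, and these $\Ad_g$'s cancel against $(\Ad_g\otimes\Ad_g)t=t$) and the graded antisymmetry of $t\in\bigwedge^2\g$ for the orientation sign. The only cosmetic difference is that the paper treats just the starting-point case and gets the endpoint case by reversing the chord's orientation, whereas you handle the cases uniformly; the key inputs and the diagrammatic style of verification are identical.
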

\begin{proof}
	Choosing a different path for a chord endpoint, possibly
	ending at a different marked vertex, multiplies the holonomy along $\delta$ by a 
	well-defined holonomy $g\in G$ (computed from the new vertex to the old vertex). 
	Let us treat only the case when the starting point of the chord is moved,
	since the case of the endpoint follows by reversing the direction. The 
	new holonomy along the moved chord is given by $g$ followed by the holonomy along
	the old chord, i.e. $\delta \mapsto \delta g$. The two holonomies
	$\gamma'$, $\gamma''$  change to $g^{-1}\gamma'$ and $\gamma'' g$.
	All of these actions cancel out due to the invariance of $t$, i.e. 
	\begin{center}
		\includesvg{images/proof_indep.svg}
	\end{center}
	where the additional holonomy $g$ is highlighted in red.

	The dependence on orientation of the chord follows from the invariance
	and antisymmetry of $t$.  
\end{proof}


Moreover, we can get a function on $\ModSpace{\Sigma, V}{G}$ from a loop in $\Sigma$, 
provided that it is assigned a function invariant under conjugation by $G$.
\begin{proposition}
	Restrict $\hol^{\delta}_{\gamma_{1}, \dots, \gamma_{k}}$ to the subspace of functions 
	on $G^{\times k}$ which are invariant under conjugation of the $i$th factor. 
	Then this map depend only on the free homotopy class of $\gamma_{i}$,
	i.e. it is independent on $v\in V$ and of 
	the representative in conjugacy classes in $\pi_{1}(\Sigma, v)$.
\end{proposition}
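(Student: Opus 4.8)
The plan is to reduce the statement to the $\Ad$-equivariance of the chord construction, which is the same mechanism that drove the proof of the previous proposition. Two based representatives of the same free homotopy class differ by a conjugation $\gamma_{i}\mapsto \eta\gamma_{i}\eta^{-1}$: when $\eta$ is a loop at the basepoint this changes the representative inside the conjugacy class in $\pi_{1}(\Sigma,v)$, and when $\eta$ is a path to another marked point $v'\in V$ it changes the basepoint. In either case the holonomy $\hol(\gamma_{i})$ is replaced by its conjugate by $h:=\hol(\eta)^{\pm 1}$. Thus it suffices to prove that passing from $\gamma_{i}$ to $\eta\gamma_{i}\eta^{-1}$ leaves $\hol^{\delta}_{\gamma_{1},\dots,\gamma_{k}}$ unchanged, under the assumption that the $i$th factor of $f$ is invariant under conjugation.

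First I would invoke the previous proposition to slide every chord endpoint lying on $\gamma_{i}$ into the interior of $\gamma_{i}$, away from the basepoint, and to fix once and for all the deformations of these endpoints to $V$. Geometrically the new loop $\eta\gamma_{i}\eta^{-1}$ traverses $\eta$, then the old $\gamma_{i}$, then $\eta^{-1}$, so all chord endpoints on $\gamma_{i}$ --- and hence the chord holonomies $\hol(\delta)$ --- sit on the unchanged middle portion. By the previous proposition their contribution, together with the $t$-insertions built from the $\Ad$-invariant tensor and the adjoint (co)action of $\hol(\delta)$, is literally unchanged. The only net effect of the replacement is therefore to prepend $h^{-1}$ and append $h$ to the whole element that is fed into the $i$th slot of $f$, that is, to conjugate that argument by $h$.

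Finally, since we restrict to functions $f$ invariant under conjugation of the $i$th factor, evaluating $f$ on the $h$-conjugated argument returns the original value, so the function is unchanged; hence it depends only on the free homotopy class of $\gamma_{i}$, independently of $v\in V$ and of the representative in its conjugacy class. The main obstacle is the bookkeeping at the chord: one must ensure that its endpoints can always be pushed off the basepoint and that the chord holonomy conjugates in step with the loop, so that the $\Ad$-invariance of $t$ --- precisely the cancellation ``all of these actions cancel out due to the invariance of $t$'' from the previous proof --- produces a clean overall conjugation of the argument rather than leftover $\Ad$ factors.
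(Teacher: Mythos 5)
Your overall strategy (reduce to conjugating the $i$th holonomy and then use conjugation-invariance of $f$) is the same one the paper uses, and it does correctly handle the cases where no chord endpoint, or only one chord endpoint, lies on $\gamma_i$. But there is a genuine gap in your reduction step, and it concerns exactly the case the paper flags as ``slightly more involved'': when \emph{both} endpoints of a chord lie on $\gamma_i$. The two chord endpoints divide the free loop into two arcs, and the basepoint may sit in either arc. Two based representatives whose basepoints lie in different arcs are \emph{not} related by a move of the form ``$\gamma_i \mapsto \eta\gamma_i\eta^{-1}$ with the chord endpoints sitting on the unchanged middle portion'': the cyclic order of (basepoint, first chord endpoint, second chord endpoint) is an invariant of based homotopy with marked points, so no application of the previous proposition can push the chord endpoints into the ``middle portion''. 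The previous proposition only lets chord endpoints slide along the path during a homotopy rel endpoints; it never lets them cross the basepoint, because crossing changes the combinatorial form of the function (which side of the word the insertion multiplies), and that is precisely what a change of basing between the two arcs forces.

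Concretely, when the basing moves from one arc to the other, the argument fed into $f_i$ changes not by conjugation of the whole word, $X \mapsto hXh^{-1}$, but by a cyclic rotation $UV \mapsto VU$ in which the two insertions $e^{te_i}$ and $e^{t'\Ad_{\delta}(e_j)}$ end up on opposite sides of the cut. This is what the paper's displayed identity
\[ f_{i}( d\, e^{te_{i}} c\,b\, e^{t'\Ad_{\delta}(e_{j})} a ) = f_{i}( b \,e^{t' \Ad_{\delta}(e_{j})}  a \,d\, e^{t e_{i}} c  ) \]
encodes, with the basepoint lying between $a$ and $d$ on one side and between $b$ and $c$ on the other. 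The repair is cheap, since conjugation invariance also yields $f_i(UV)=f_i\bigl(V(UV)V^{-1}\bigr)=f_i(VU)$; one computes the function for the second basing directly and observes it is such a cyclic rotation of the first. But as written your proof never produces this identity: your reduction asserts, incorrectly in this case, that the insertions always stay bundled together inside a single conjugated block.
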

\begin{proof}
	Changing how the loop $\gamma_{i}$ is deformed to a based loop (also possibly changing
	the base point from $v$ to $v'$) conjugates the holonomy, under which the function $f$ is invariant. 
	The case when there is a chord on $\gamma_{i}$ is analogous; the case when both
	endpoints lie on $\gamma_{i}$ is slightly more involved, and follows from a 
	suitable version of the equation
	\[   f_{i}( d\, e^{te_{i}} c\,b\, e^{t'\Ad_{\delta}(e_{j})} a ) = f_{i}( b \,e^{t' \Ad_{\delta}(e_{j})}  a \,d\, e^{t e_{i}} c  ) \]
	with the holonomies $a, b, c, d$ as on Figure \ref{fig:ConjInv}.
	\begin{figure}[h]
		\centering
			\includesvg{images/conjinv.svg}
		\caption{Notation for holonomies for conjugation invariance, in the case of changing a basepoint of $\gamma_i$.} 
		\label{fig:ConjInv}
	\end{figure}
\end{proof}

\subsection{Quasi-BV structure on holonomies}
We can now express the action of the BV operator on these functions coming 
from chords. We will from now on assume that all the families of paths have a finite
number of transverse double (possibly self-) intersections and no other intersections.
We also assume that the curves intersect the boundary only at their endpoints, which lie in
$V$, and that they become tangent to the foliation in a finite number of points only.

\begin{definition}\label{def:deltaintersections}
	For any collection $\gamma_{1}, \dots, \gamma_{n}$ of paths on $\Sigma$, let $p$ be an intersection
	or self-intersection point. Let us place a chord connecting 
	the two segments near $p$ as on Figure \ref{fig:FourChIn} for an intersection in the interior of $\Sigma$ and as on Figure \ref{fig:FourChBnd} for
	an intersection at the boundary. 
	
\begin{figure}[h]
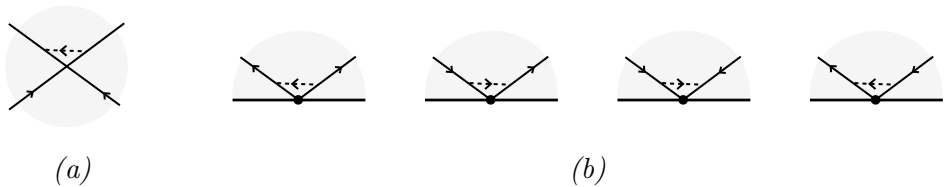

	\centering
	\begin{subfigure}[b]{0.14\textwidth}
		\begin{center}
			\includesvg{images/fourchordsv2.svg}
			\vspace{-3mm}	
	\end{center}
		\caption{}
		\label{fig:FourChIn}
	\end{subfigure}
	\quad  
	\begin{subfigure}[b]{0.7\textwidth}
		\begin{center}
			\includesvg{images/fourchordsboundary.svg}
		\end{center}
		\caption{}
		\label{fig:FourChBnd}
	\end{subfigure}
	\caption{Rules for adding chords at intersections in the interior and at the boundary. The surface orientation is counter-clockwise.}\label{fig:FourCh}
\end{figure}	
	
	Define a function on the moduli space given by 
	the sum over all intersection points
	\begin{equation} \label{eq:chordsfromintersections}
		 \sum_{p} \lambda_{p} \; \hol^{\delta_{p}}_{\gamma_{1}, \dots, \gamma_{k}} + \frac 12 \hol_{\gamma_{1}, \dots, \gamma_{k}} \circ \sum_{k} \mathrm{rot}_{\gamma_{i}} \, \nu^{\mathrm L, i},
	\end{equation}
	where $\lambda_{p}$ is $1$ if the intersection is in the interior of the surface
	and $1/2$ if the intersection is at the boundary. The second sum is 
	over all paths, acting with the vector $\nu\in \g$ as a left-invariant vector field on the $i$th copy
	of $G$ in $G^{\times k}$.
\end{definition}
The second sum can be seen as adding a $\mathrm{rot}_{\gamma_{i}}$-multiple
of a short chord on each path $\gamma$, as we will see it the proof of the following proposition.

\begin{proposition}\label{prop:reidemeister}
	The above function does not change if we move the individual paths
	$\gamma_{i}$ by homotopy. 
\end{proposition}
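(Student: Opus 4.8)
The plan is to reduce the claim to a finite list of elementary local moves and verify invariance of \eqref{eq:chordsfromintersections} under each. A generic homotopy of the immersed paths $\gamma_1,\dots,\gamma_k$ changes the combinatorial type of their (self-)intersection pattern only at finitely many times, and each such change is one of the familiar catastrophes: a Reidemeister~II move (a bigon is born or dies, creating or destroying a pair of transverse double points of opposite sign), a Reidemeister~III move (a strand is pushed across a double point of two other strands, so the three double points persist but move), or a Reidemeister~I move (a small kink appears or disappears, producing one self-intersection and changing the tangent framing). Between these times the combinatorics is constant and every chord $\delta_p$ is simply carried along by a homotopy within $\Sigma$. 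Note that passing a tangency of $\gamma_i$ with the foliation is a regular homotopy: it creates no double point and leaves $\mathrm{rot}_{\gamma_i}$ unchanged, so it does not affect \eqref{eq:chordsfromintersections}; only the three Reidemeister moves, together with their boundary analogues governed by Figure~\ref{fig:FourChBnd} and the weight $\lambda_p=\frac12$, require checking.

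The Reidemeister~III move is essentially free. The three double points survive throughout, each associated chord is transported by a homotopy of the curves and of the chord in $\Sigma$ (its endpoints sliding along the strands they meet), and so, by the proposition asserting that $\hol^{\delta}_{\gamma_1,\dots,\gamma_k}$ depends only on the homotopy classes of the curves and of the chord, each of the three summands is individually unchanged. The same remark disposes of any motion of a double point that does not alter the combinatorial type. For Reidemeister~II, the two new double points $p_\pm$ carry opposite signs, and the chords $\delta_{p_\pm}$ prescribed by Figure~\ref{fig:FourChIn} join the same two strands on the two sides of the bigon with opposite induced orientations. Sliding one chord onto the other (again by homotopy-invariance of $\hol^\delta$) makes the two contributions equal up to the factor $-1$ incurred by reversing the chord orientation (Definition~\ref{def:chord}); hence $\lambda_{p_+}\hol^{\delta_{p_+}}+\lambda_{p_-}\hol^{\delta_{p_-}}=0$. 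This cancellation is precisely the graded antisymmetry and $\Ad$-invariance of $t$.

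The Reidemeister~I move is the crucial and most delicate case, and it is here that the second term of \eqref{eq:chordsfromintersections} intervenes. Creating a kink simultaneously produces one self-intersection $p$, to which one attaches a short chord whose two endpoints are infinitesimally close on the same strand, and changes $\mathrm{rot}_{\gamma_i}$ by $\pm1$, hence changes the $\nu$-term by $\pm\frac12\nu^{\mathrm L,i}$. As the endpoints of a short chord coincide, the holonomy along it is trivial, $\Ad_\delta=\mathrm{id}$, so $\hol^{\delta_p}$ reduces to the action of $(-1)^{\hdeg{e_i}}t^{ij}e_ie_j\in U\g$ on the relevant copy of $G$; by the identity recorded in Proposition~\ref{prop:tproperties} its antisymmetric part is $\tfrac12\nu$ (the symmetric part being central and not contributing). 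One then checks that the sign of the self-crossing and the sign of the rotation jump are correlated --- both being fixed by whether the kink is created clockwise or counterclockwise --- so that $\lambda_p\hol^{\delta_p}$ cancels the change in the $\nu$-term exactly. This also proves the remark following Definition~\ref{def:deltaintersections} that the rotation term amounts to inserting $\mathrm{rot}_{\gamma_i}$ short chords.

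The boundary versions of all three moves are treated identically, now using the chord rule of Figure~\ref{fig:FourChBnd} and the weight $\lambda_p=\frac12$, which is exactly what renders a double point migrating to the boundary consistent with its interior description. The main obstacle is the sign bookkeeping in the Reidemeister~I step: one must simultaneously track the graded signs inside $t$, the orientation-induced sign of the short chord, and the $\pm$ of the rotation jump, and confirm the identification of the short-chord operator with $\tfrac12\nu$. Once these signs are pinned down, the remaining moves follow formally from homotopy-invariance of $\hol^\delta$ together with the antisymmetry and invariance of $t$.
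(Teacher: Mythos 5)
Your proof is correct and follows essentially the same route as the paper: reduce a generic homotopy to the three Reidemeister moves (plus their boundary analogues), dispose of RIII term by term and RII by the antisymmetry and invariance of $t$, and handle RI by identifying the short chord with the action of $\nu/2$ (via Proposition~\ref{prop:tproperties}) so that it cancels the $\pm\tfrac12\nu$ jump in the rotation term. The paper's proof is exactly this case analysis, presented pictorially, so your argument matches it in both structure and substance.
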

\begin{proof}
	We need to check invariance with respect to the three Reidemeister moves (see \cite[{Proof~of~Prop.~2.11}]{Nie2013}).
	
	\textbf{RI:}
		Adding a twist to $\gamma_i$ adds two terms to \eqref{eq:chordsfromintersections}, from the new self-intersection and from the change of the rotation number.  At the intersection, we get a \emph{short chord} oriented as follows:
		\begin{center}
		\includesvg{images/shortchord.svg}
		\end{center}
        From Definition \ref{def:chord}, such chord acts by 
		$ (-1)^{\hdeg{e_{i}}}t^{ij}(e_{i})^{\mathrm L}(e_{j})^{\mathrm L} = \nu^{\mathrm L}/2$,
		see also Proposition \ref{prop:tproperties}.
		This cancels with $-\nu^{L}/2$ coming from the full additional clockwise turn the path undertakes.

	\textbf{RII:}
		We get a cancellation
		\begin{center}
			\includesvg{images/RII.svg}
		\end{center}
		which holds by the invariance and antisymmetry of the chord. Other possible
		cases follow since changing the orientation of any 
		of the paths multiplies the whole equation by $-1$. If one of the
		intersection points is at the boundary, the situation is analogous.
			
	\textbf{RIII:} 	The following identity holds term by term. Again, changing the orientation of any 
		of the paths multiplies the terms where a chord lies on that path by $-1$.
		\begin{center}
			\includesvg{images/RIII.svg}
		\end{center}
	
\end{proof}

\begin{theorem} \label{thm:geometricBV}
	Let $\gamma_{1}, \dots, \gamma_{k}$ be as before. Then
	\begin{equation}\label{eq:BVintersection}
		\Delta \circ \hol_{\gamma_{1}, \dots, \gamma_{k}} = 
	 \sum_{p} \lambda_{p} \; \hol^{\delta_{p}}_{\gamma_{1}, \dots, \gamma_{k}} +  \frac 12 \hol_{\gamma_{1}, \dots, \gamma_{k}} \sum_{i} \mathrm{rot}_{\gamma_{i}} \nu^{\mathrm L, i}.
	\end{equation}
	where the right hand side is defined in Definition \ref{def:deltaintersections}
\end{theorem}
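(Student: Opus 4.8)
The plan is to reduce the identity to a direct computation with the combinatorial formula \eqref{eq:qbvdef} for $\Delta^\Gamma$, exploiting two facts already in place: $\Delta$ is independent of the chosen skeleton (Theorem \ref{thm:qBVFR}), and the right-hand side is invariant under homotopy of the paths (Proposition \ref{prop:reidemeister}), while the left-hand side is manifestly so since $\hol_{\gamma_1,\dots,\gamma_k}$ depends only on homotopy classes. I am therefore free to homotope the family $\gamma_1,\dots,\gamma_k$ into convenient position and to pick a skeleton $\Gamma$ adapted to it, with each path running along edges of $\Gamma$ so that $\hol_{\gamma_1,\dots,\gamma_k}$ is the pullback of a function on $G^{\times N}$ under the words expressing the $\gamma_i$ in the skeleton edges. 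The contribution of each (self-)intersection will then be localized at a marked vertex, either by deforming the endpoints of the associated local chord to $V$ as in Definition \ref{def:chord} (Figure \ref{fig:MoveEndpoint}), or equivalently by realizing the crossing through the fusion procedure of Proposition \ref{prop:oddfusion}.

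With such a $\Gamma$ fixed, I would apply $\Delta^\Gamma$ directly. Because it is second order, its action on the product of holonomy functions splits according to which holonomies the two derivatives in a term $t_{ab}=(-1)^{|e_i|}t^{ij}(e_i)_a(e_j)_b$ hit. The vertex sum $\sum_{a<b\in\he(p)}\frac12 t_{ab}$ contains exactly one cross term for each unordered pair of strand-segments meeting at $p$; for a pair belonging to $\gamma_i$ and $\gamma_j$ and crossing at $p$, this cross term acts on $\hol_{\gamma_i}$ and $\hol_{\gamma_j}$, and unwinding Definition \ref{def:chord} identifies it with the chord function $\hol^{\delta_p}_{\gamma_1,\dots,\gamma_k}$. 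Comparison with Figures \ref{fig:FourChIn} and \ref{fig:FourChBnd} fixes the orientation of $\delta_p$ and the weight $\lambda_p$ --- equal to $1$ in the interior and to $1/2$ at the boundary, the halving reflecting that at a boundary crossing only part of the four local half-edges enters the linear order at $p$. Self-intersections of a single $\gamma_i$ are treated identically, the cross term now pairing two half-edges of the same strand.

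It remains to account for the rotation term. The $\nu$-part $\sum_{\gamma\in\Edges\Gamma}\frac12\mathrm{rot}_\gamma(\nu)_{a_\gamma}$ of $\Delta^\Gamma$ is designed to reproduce $\frac12\hol_{\gamma_1,\dots,\gamma_k}\sum_i\mathrm{rot}_{\gamma_i}\nu^{\mathrm L,i}$, but two reconciliations are needed: the edge rotation numbers must be assembled into the intrinsic path rotation numbers $\mathrm{rot}_{\gamma_i}$, and the diagonal short-chord contributions must be absorbed. For the first I would use the composition rule for rotation numbers (Figure \ref{fig:comprot}); for the second, the identity $(-1)^{|e_i|}t^{ij}(e_i)^{\mathrm L}(e_j)^{\mathrm L}=\nu^{\mathrm L}/2$ of Proposition \ref{prop:tproperties}, exactly as in the RI step of Proposition \ref{prop:reidemeister}, where a short chord at a removable kink contributes $\nu^{\mathrm L}/2$ and is traded against the half-integer jump of the rotation number.

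The main obstacle I expect is sign and orientation bookkeeping rather than any conceptual difficulty: matching the graded sign $(-1)^{|e_i|}$ and the antisymmetry of $t$ against the orientation-dependent signs of the chord rules in Figure \ref{fig:FourCh}, pinning down the weight $\lambda_p=1/2$ at boundary crossings, and reconciling the edge-wise rotation numbers in $\Delta^\Gamma$ with the intrinsic $\mathrm{rot}_{\gamma_i}$ through the composition corrections and the short-chord/$\nu$ trade-off. Once the local identifications are settled at a single interior crossing, a single boundary crossing, and a single kink, the general case follows by summing over all vertices and edges, since both sides are additive in these local contributions.
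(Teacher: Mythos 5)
Your overall framework is the same as the paper's: use homotopy invariance of both sides (Proposition \ref{prop:reidemeister}) and skeleton-independence of $\Delta$ (Theorem \ref{thm:qBVFR}) to retract the paths onto a skeleton, localize everything near the marked points, apply the combinatorial formula \eqref{eq:qbvdef}, and match chord terms against Definition \ref{def:deltaintersections}, with short chords absorbing the rotation bookkeeping. The rotation/$\nu$ part of your plan is essentially the paper's argument.

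However, there is a genuine gap at the central step. You assert that the vertex sum $\sum_{a<b\in\he(p)}\tfrac12 t_{ab}$ ``contains exactly one cross term for each unordered pair of strand-segments meeting at $p$,'' and you then analyze only pairs that \emph{cross}. Both points are wrong, and fixing them is the actual content of the proof. The half-edges $a,b$ in \eqref{eq:qbvdef} are half-edges of the \emph{skeleton}, and after retraction many path segments run along each skeleton edge; by the Leibniz rule a single pair of through-going segments at a vertex receives up to \emph{four} chord terms (one per pair of their strand half-edges lying on distinct skeleton half-edges), not one --- this is equation \eqref{eq:sumofchords} in the paper. Moreover, $\Delta^\Gamma$ is blind to whether two segments actually intersect: \emph{every} pair of segments passing through the vertex neighborhood produces chord terms, including parallel, non-crossing pairs. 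One must show (i) that for non-crossing pairs these chords cancel, using invariance and antisymmetry of $t$ to slide them into canceling positions, and (ii) that for a crossing pair in the interior two of the four chords cancel while the other two \emph{add}, so that with the overall factor $\tfrac12$ one obtains coefficient $\lambda_p=1$ (your ``one term times $\tfrac12$'' accounting would give $\tfrac12$ at interior crossings, contradicting the statement); at a boundary crossing only one chord survives, giving $\lambda_p=\tfrac12$. One also needs the preliminary step of removing the restriction that the two strand half-edges not lie on the same skeleton half-edge, where the added terms cancel between neighboring vertices. This case analysis (the paper's Figures \ref{fig:PC1}--\ref{fig:PC3}) is the heart of the proof and cannot be subsumed under ``sign and orientation bookkeeping.''
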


The formula \eqref{eq:BVintersection} completely determines
the quasi BV operator, since we can get any function on the moduli space via holonomies.
\begin{corollary}
	The quasi BV operator acting on $\ModSpace{\Sigma, V}{G}/G_{v}$ is equal to the quasi BV operator
	on $\ModSpace{\Sigma, V\setminus \{v\} }{G}$. Specifically, there is a canonical BV operator
	on $\mathcal O(\ModSpace{\Sigma}{G})$, i.e. on the $G^{\times V}$-invariant functions in $\mathcal O(\ModSpace{\Sigma, V}{G})$. The choice
	of $V$ is arbitrary. In general the foliation must be deformed to be compatible with
	$V$, which is always possible, see Figure \ref{fig:DeformFoliation}.
\begin{figure}[h]
	\centering
		\includesvg{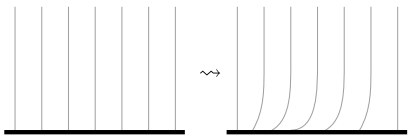}
	\caption{Deforming a foliation to be compatible with a new marked point.}
	\label{fig:DeformFoliation}
\end{figure}
\end{corollary}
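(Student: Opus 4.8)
The plan is to reduce everything to the intrinsic topological formula of Theorem \ref{thm:geometricBV}. That formula expresses $\Delta$ on any holonomy function in terms of chords placed at transverse (self-)intersections of the constituent curves together with the rotation-number term, and this data refers to the marked-point set $V$ only through the location of curve endpoints and through the tangency constraint imposed on the foliation. Since the formula determines $\Delta$ completely (as noted immediately after Theorem \ref{thm:geometricBV}), comparing the operators for different $V$ amounts to checking that the right-hand side of \eqref{eq:BVintersection} is insensitive to whether a given point is marked.

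First I would establish the manifold-level identification $\ModSpace{\Sigma, V}{G}/G_v \cong \ModSpace{\Sigma, V\setminus\{v\}}{G}$, assuming $|V|\ge 2$ so that the right-hand side is defined. This is the Morita equivalence of the full subgroupoids $\Pi_1(\Sigma, V)$ and $\Pi_1(\Sigma, V\setminus\{v\})$ of $\Pi_1(\Sigma)$: choosing a path from $v$ to some $v'\in V\setminus\{v\}$ trivializes the residual gauge freedom at $v$, so that conjugation-at-$v$ orbits correspond bijectively to groupoid homomorphisms based at $V\setminus\{v\}$; concretely this is visible on a skeleton containing a single edge joining $v$ to $v'$. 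Next I would note that $\Delta$ descends to this quotient: by Theorem \ref{thm:qBVFR} it is $\g^V$-invariant, so $[\Delta,\rho_v(x)]=0$ for all $x\in\g$, whence $\rho_v(x)f=0$ implies $\rho_v(x)\Delta f = \Delta\rho_v(x)f = 0$. Thus $\Delta$ preserves the subalgebra $\mathcal O(\ModSpace{\Sigma, V}{G})^{G_v} = \mathcal O(\ModSpace{\Sigma, V}{G}/G_v)$, and it remains to match the descended operator with the intrinsic $\Delta$ on $\ModSpace{\Sigma, V\setminus\{v\}}{G}$.

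To match them I would use that any $G_v$-invariant function is, by the conjugation-invariance propositions of the previous subsection, generated by holonomy functions of curves that can be drawn with all endpoints in $V\setminus\{v\}$ and avoiding a neighbourhood of $v$. On such curves the right-hand side of \eqref{eq:BVintersection} is literally the same whether one regards them as living in $(\Sigma, V)$ or in $(\Sigma, V\setminus\{v\})$, since no intersection, and hence no chord $\hol^{\delta_p}_{\gamma_1,\dots,\gamma_k}$, is localized at $v$. The one point to reconcile is the foliation: removing $v$ drops its tangency constraint, and one must deform the foliation to be compatible with $V\setminus\{v\}$. By the foliation-homotopy invariance in Theorem \ref{thm:qBVFR} this does not change $\Delta$, and because the relevant curves avoid a neighbourhood of $v$, their rotation numbers are unaffected by a deformation supported near $v$ (Figure \ref{fig:DeformFoliation}). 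Hence the geometric formula transfers term by term and the two operators coincide.

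Finally, the ``specifically'' statement follows by iteration. Since $\Delta$ is $\g^V$-invariant it descends to the $G^V$-invariant functions $\mathcal O(\ModSpace{\Sigma}{G}) = \mathcal O(\ModSpace{\Sigma, V}{G})^{G^V}$, yielding a BV operator there; peeling off the points of $V$ one at a time by the first part identifies this operator with the one coming from any single remaining marked point. Independence of the choice of $V$ is then obtained by comparing two marked-point sets $V_1, V_2$ through their union $V_1\cup V_2$ and reducing to each in turn, each reduction leaving the operator unchanged. The main obstacle I anticipate is the bookkeeping in the foliation step: one must verify that curves representing $G_v$-invariant functions can always be pushed off a neighbourhood of the removed point while keeping their intersections generic and their rotation numbers fixed, so that no boundary chord contribution at $v$ is gained or lost and the formula \eqref{eq:BVintersection} truly matches on the two surfaces.
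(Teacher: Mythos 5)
Your proposal is correct and takes essentially the same route as the paper, which leaves the corollary unproved precisely because it follows from the remark after Theorem \ref{thm:geometricBV} (formula \eqref{eq:BVintersection} determines $\Delta$ on holonomy functions, which generate $\mathcal O(\ModSpace{\Sigma,V}{G})$) together with the $\g^{V}$-invariance and foliation-homotopy invariance of Theorem \ref{thm:qBVFR}; your write-up simply fills in these steps. One tiny slip: a foliation compatible with $V$ is automatically compatible with $V\setminus\{v\}$ (the tangency condition is imposed only \emph{at} marked points), so the deformation of Figure \ref{fig:DeformFoliation} is needed when \emph{adding} points, as in your final comparison of $V_1$ and $V_2$ through $V_1\cup V_2$ — exactly where you do invoke it.
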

\begin{proof}
The first claim follows since for both quasi BV operators on  $\ModSpace{\Sigma, V}{G}/G_{v}$ and on $\ModSpace{\Sigma, V\setminus \{v\} }{G}$, the RHS of \eqref{eq:BVintersection} is the same. Similarly, the quasi BV operator on $\mathcal O(\ModSpace{\Sigma}{G})$ can again be computed using the RHS of \eqref{eq:BVintersection}. 
\end{proof}

\begin{proof}[Proof of Theorem \ref{thm:geometricBV}]
	Both sides of Equation \eqref{eq:BVintersection} are maps $$\mathcal O(G^k) \to \mathcal O(\ModSpace{\Sigma, V}{G}).$$ The left hand side, evaluated on a tensor product of $k$ functions on $G$, is equal to the BV operator from Theorem \ref{thm:qBVFR} acting on the function on the moduli space given by evaluating these $k$ functions on the $k$ holonomies along $\gamma_1, \dots, \gamma_k$. 
		
	The right hand side is equal to a similar function given by the holonomies, \emph{together with one chord for each intersection of $\gamma_i$} (plus the term containing rotations of $\gamma_i$.)
	
	 Let us describe the strategy of the proof. We will start with the left hand side of \eqref{eq:BVintersection}. We have an explicit formula \eqref{eq:qbvdef} for the quasi-BV operator, once we choose a skeleton $\Gamma$. If we deform the paths $\gamma_i$ to intersect only near the vertices of $\Gamma$ (the marked points), the action of the first term of \eqref{eq:qbvdef} can be rewritten in terms of chords from Definition \ref{def:chord}. These chords will act on any pair of path segments meeting at a marked point. Analyzing the possible positions of such pairs of path segments, we will show that if the paths don't intersect, these chords will cancel each other; if the paths do intersect, we recover the intersection formula \eqref{eq:chordsfromintersections}.
	\medskip
	
	Let us start with $\Delta \circ \hol_{\gamma_{1}, \dots, \gamma_{k}}$.
	For a chosen skeleton $\Gamma$, $\hol_{\gamma_{1}, \dots, \gamma_{k}}$ is 
	given (from bottom to top) by 
	applying the iterated coproduct on each component of $\mathcal O(G^{\times k})$,
	then a permutation of these factors, and finally by multiplying
	together factors corresponding to the same edge of $\Gamma$.

	Both sides of equation \eqref{eq:BVintersection} are homotopy invariant.	
	Thus, the above morphism can be visualized by retracting the surface, and with it 
	the paths $\gamma$, to $\Gamma$. Let us deform the paths $\gamma$ such that they only intersect in small neighborhoods of vertices of $V$, as on Figure \ref{fig:Highway}. For each $v\in V$, we will call the connected components of the intersection of this neighborhood with $\cup_i \gamma_i$ \emph{segments}. By possibly further changing $\gamma$ by homotopy, we can ensure that each pair of segments intersects at most once in this neighborhood, as on Figure \ref{fig:Highway}. This can be achieved, in generic case, by e.g. making the segments straight.
\begin{figure}[h]
	\centering
		\includesvg{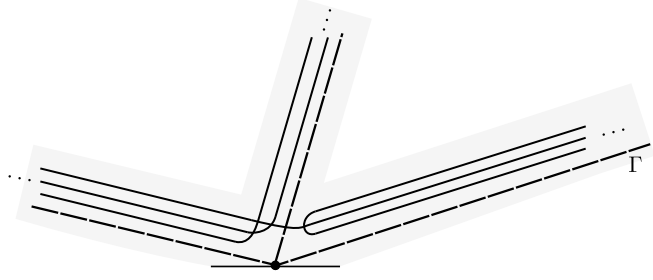}
	\caption{Paths intersecting only near a marked point $v$.}
	\label{fig:Highway}
\end{figure}	
	
	The operator $\Delta$ in $\Delta \circ \hol_{\gamma_{1}, \dots, \gamma_{k}}$ 
	(defined in \eqref{eq:qbvdef}, ignoring the rotation term and signs for a moment) acts on pairs of 
	half-edges of $\Gamma$. By Leibniz rule, we get at each vertex a sum 
	$$\sum_{i < j, i, j \text{ not in the same half-edge of $\Gamma$}} \frac12 \hol^{i\to j}_{\gamma_{1}, \dots, \gamma_{k}}$$ 
	over all half-edges of the part of $\gamma_{1}, \dots, \gamma_{k}$ incident to the vertex.
	The chord $i\to j$ connects the two half-edges $i$ and $j$ and comes 
	from the term $\tilde{t}_{ab}$ of $\Delta$.{}
	
	We can remove the condition that $i$ and $j$ do not follow the same half-edge of $\Gamma$. Indeed, the above sum is equal to
	$$\sum_{i < j}  \hol^{i\to j}_{\gamma_{1}, \dots, \gamma_{k}},$$ 
	where the added terms cancel with terms from neighboring vertices by the invariance
	and antisymmetry of $t$.
	
	The sum above contains chords connecting half-edges of paths close to marked points.
	Every chord connects either two consecutive half-edges in
	a path (i.e. lies on one path segment), or connects two different segments of (possibly the same) path going through the marked point.
	Splitting the above sum, we get
	\begin{equation}\label{eq:sumofchords}
	\sum_{e_{1} \neq e_{2}} \frac 12 (\text{1 to 4 chords between these two segments} ) + \frac 12 \sum_{e}\hol^{e}_{\gamma_{1}, \dots, \gamma_{k}}.
	\end{equation}
	
	The first term is a sum over all pairs of segments $(e_1, e_2)$ of paths $\gamma_{i}$ going through $v$, and for each such pair, 
	we collect all the chords that connect them. In the second term,
	we get a ``short'' chord placed on the segment $e$
	(at the marked point), which equals the action of $\pm\nu/2$. Together with the rotation part 	of $\Delta$, it combines to give the second term of the RHS of \eqref{eq:BVintersection},
	since at each vertex the path undergoes an extra $1/2$ turn in addition
	to the rotation along edges of $\Gamma$.
	\medskip{}
	
	\textbf{Signs:} If a half-edge $\gamma'$ arrives at the vertex $v$, the 
	element $x\in\g$ coming from the chord $\tilde{t}_{ab}$ acts as $x^{-\mathrm{R}}$ on that holonomy;
	for an outgoing edge $\gamma''$, $x$  acts by $x^{\mathrm{L}}$, see the Figure \ref{fig:convention}. However, 
	for a function of the product $\gamma''\gamma'$, the equality $x^{L''} = x^{R'}$ holds. We can thus move
	every action to act by left invariant vector fields on the outgoing half-edge,
	as in Definition \ref{def:chord}, with a minus sign for each chord acting on an incoming edge. 
	Let us now apply this rule.
	\medskip
	
	In the case where the chord lies on one path segment, we get a $\tfrac 12$ times the short chord, as picture on Figure \ref{fig:PC0}
\begin{figure}[h]
	\centering
	\includesvg{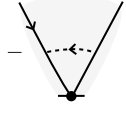}
	\caption{A chord lying on one path segment from the second sum in \eqref{eq:sumofchords}.}
	\label{fig:PC0}
\end{figure}	The orientation of the chord is given by the order of half-edges, as in equation 
	\eqref{eq:qbvdef}, with the minus sign as discussed above. This gives an action of $-\nu/4$, which is consistent with the counter-clockwise half-turn, as on
	Figure \ref{fig:foliation}.

	\medskip

\begin{figure}[h]
	\centering
	\includesvg{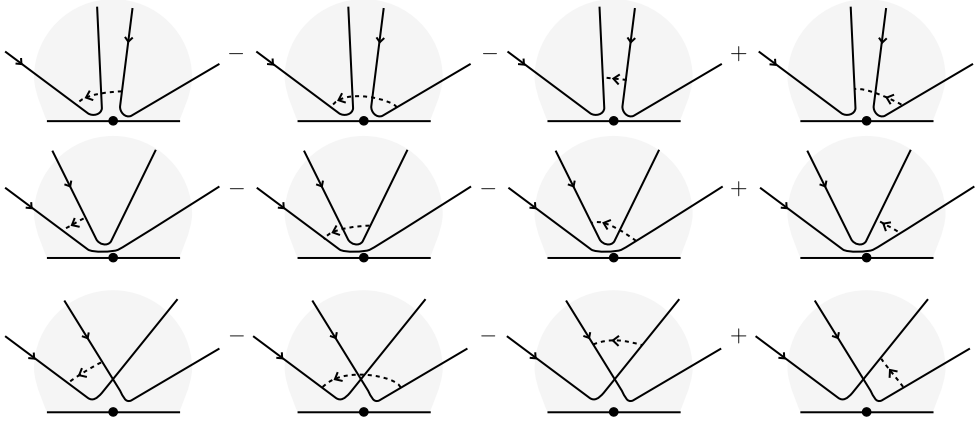}
	\caption{Case with two traversing paths.}
	\label{fig:PC1}
\end{figure}

	Next, we treat the sums over pairs of segments from Equation \eqref{eq:sumofchords} case by case. If two path segments meet near the marked point $v$, none, one or both of them start on end at $v$. In all cases, we will show that we recover the intersection rule from Definition \ref{def:deltaintersections}:
	\begin{enumerate}
		\item If neither of the paths starts nor ends at the marked point,
		there are $4$ chords. There are three different ways to 
		connect $4$ half-edges to $2$ path segments, two in which the paths don't intersect and one in which they do. These
		four possibilities are shown on Figure \ref{fig:PC1}.

		The first two lines, without an intersection of the segments, vanish. 
		In the last line, the first two terms cancel,
		but the other two chords add up. This corresponds to the chord 
		that comes from two paths intersecting from Definition \ref{def:deltaintersections}.
		The direction is correct (chord leaves the first outgoing half-edge)
		and the sum of two chords cancels with the factor $\tfrac 12$ in Equation \eqref{eq:sumofchords}.
		
		\item If one path starts or ends at $v$, we get the three
		cases shown on Figure \ref{fig:PC2}. 		As before, we get a non-zero contribution only in the last case, 
		which, after multiplying by $\tfrac 12$ from \eqref{eq:sumofchords}, has the correct factor of $+1$.
\begin{figure}[h]
	\centering
	\includesvg{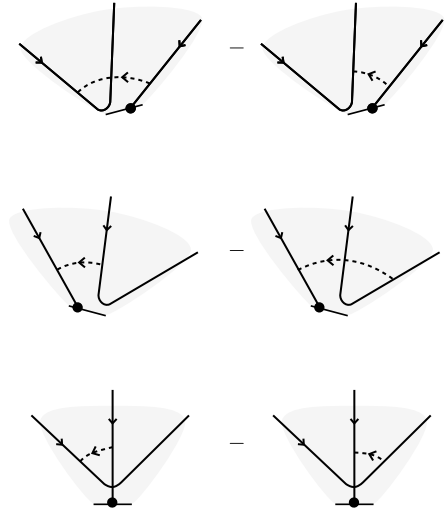}
	\caption{Case with one path ending at the marked point.}
	\label{fig:PC2}
\end{figure}
		\item If both paths start or end at $v$, there is only one term, shown on Figure \ref{fig:PC3}. Together with the factor $\tfrac12$ from \eqref{eq:sumofchords}, we get an agreement with Definition \ref{def:deltaintersections}.
\begin{figure}[h]
	\centering
	\includesvg{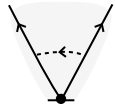}
	\caption{Case with both paths ending at the marked point.}
	\label{fig:PC3}
\end{figure}

	\end{enumerate}
	The remaining cases, in which the paths are oriented differently, 
	follow from the above calculations: changing an orientation of one of the
	paths changes signs on both sides of the equation.
\end{proof}

\subsection{Another formula for the quasi-BV operator}
We will now present two more formulas for the quasi-BV operator in terms of
the surface. Their role is to make the role of the foliation clearer. 
Concretely, in the formula \eqref{eq:chordsfromintersections}, the term containing the rotation numbers $\mathrm{rot}_\gamma$ has to be added by hand to the sum over all intersections. If we instead consider intersection of the collection of paths $\gamma_1, \dots, \gamma_k$ and \emph{the same collection, shifted in the direction of the foliation}, we will obtain the rotation numbers automatically. We will first consider a case of a general foliation, and then a simpler situation of an orientable foliation (see Appendix \ref{app:foliation}).

We will shift the paths $\gamma_{i}$ in the direction of the foliation.
For each path $\gamma_i$, there are two possible choices for the direction of 
this shift. 

\begin{definition}\label{def:shifts1}
	 Let $\gamma_{1}, \dots, \gamma_{k}$ as before.
	 Choose a direction of a small shift for each $\gamma_{i}$ such that the shifted and unshifted paths all intersect transversally in double points. If
	 a path $\gamma_{i}$ intersects a shifted path $\gamma_{j}^{\mathrm{sh}}$
	 in $p$, the point $p$ has a preimage on the original path $\gamma_{j}$,
	 let us call it $p_{0}$.  See Figure \ref{fig:ChordSh}, with the shifted path shown in red.
	
\begin{figure}[h]
	\centering
		\includesvg{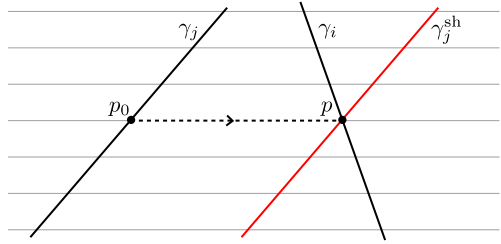}
	\caption{Rule for chords at the intersection of a shifted (red) and an unshifted (black) path}
	\label{fig:ChordSh}
\end{figure}
	 
	 Then, we define a function on the moduli space by averaging over all
	 possible choices of the directions of shifts, and for each choice
	 by summing over all intersections of shifted and unshifted paths,
	 with a chord going from $p_{0}$ to $p$
	 \begin{equation}\label{eq:shifts1}
		 \frac{1}{2^{k}}\sum_{\text{$2^{k}$ possible shifts}}  \quad
		 \sum_{p \in \gamma\cap\gamma^{\mathrm{sh}}} \frac 12 \alpha_{p}\,\hol_{\gamma_{1}, \dots, \gamma_{k}}^{p_{0} \to p}.
	 \end{equation}
	 where the sign $\alpha_{p}$ is $+1$ iff the half-edges leaving $p$,
	 ordered (shifted, unshifted), are compatible with the orientation of 
	 the surface. 
\end{definition} 


\begin{proposition}\label{prop:shifts1}
	The function \eqref{eq:shifts1} from the above definition
	is equal to $\Delta\circ \hol_{\gamma_{1}, \dots, \gamma_{k}}$.
\end{proposition}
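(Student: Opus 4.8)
The plan is to reduce everything to Theorem \ref{thm:geometricBV}, which already computes $\Delta\circ\hol_{\gamma_1,\dots,\gamma_k}$ as the intersection sum \eqref{eq:chordsfromintersections}. It therefore suffices to show that the averaged shift expression \eqref{eq:shifts1} equals that intersection sum term by term. First I would observe that, since each shift is small and taken in the direction of the foliation, the only places where an unshifted path $\gamma_i$ can meet a shifted path $\gamma_j^{\mathrm{sh}}$ are (a) in a small neighbourhood of a genuine (self-)intersection $p$ of the original family, and (b) in a small neighbourhood of a point where $\gamma_i$ becomes tangent to the foliation; everywhere else the shifted and unshifted curves run parallel and do not cross. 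This dichotomy splits \eqref{eq:shifts1} into a genuine-intersection part, to be matched with $\sum_p\lambda_p\hol^{\delta_p}$, and a tangency part, to be matched with the rotation term $\tfrac12\hol\sum_i\mathrm{rot}_{\gamma_i}\nu^{\mathrm L,i}$.

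For the genuine intersections I would analyse a transverse crossing $p$ of two segments of $\gamma_i$ and $\gamma_j$ (possibly $i=j$). In any one of the $2^k$ shift configurations this crossing produces two nearby intersections of the shifted/unshifted family, namely $\gamma_i\cap\gamma_j^{\mathrm{sh}}$ and $\gamma_j\cap\gamma_i^{\mathrm{sh}}$, and in the limit of vanishing shift both chords collapse onto the same chord across $p$ but with opposite orientations. By antisymmetry of $t$ the orientation reversal contributes $-1$, while the ordering convention for $\alpha_p$ (shifted before unshifted) assigns the two crossings opposite signs; these two signs cancel, so the two contributions add rather than cancel. Since neither the chord nor its sign depends on the chosen shift directions to leading order, the average $\tfrac1{2^k}\sum_{\text{shifts}}$ is trivial and, together with the prefactor $\tfrac12$, yields exactly one chord $\hol^{\delta_p}$ with coefficient $1$ for an interior crossing; this matches the chord prescription of Figure \ref{fig:FourChIn}. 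For a crossing on the boundary only one of the two nearby intersections survives (the second pushoff would have to cross into $\partial\Sigma$), so the coefficient drops to $\tfrac12$, reproducing $\lambda_p$ and the boundary rule of Figure \ref{fig:FourChBnd}.

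The heart of the argument, and the step I expect to be the main obstacle, is the tangency part, where I must recover the rotation numbers. Near a generic tangency of $\gamma_i$ with the foliation the curve looks like a parabola tangent to a leaf, and pushing it off along the foliation produces exactly one transverse crossing of $\gamma_i$ with $\gamma_i^{\mathrm{sh}}$, for either direction of the shift. Each such crossing yields a short chord, which by the computation in the proof of Proposition \ref{prop:reidemeister} (see also Proposition \ref{prop:tproperties}) acts by $\nu^{\mathrm L}/2$. The delicate point is the sign bookkeeping: as I reverse the shift direction, both the crossing sign $\alpha_p$ and the orientation of the short chord (hence an extra $-1$ from antisymmetry of $t$) flip, so the two directions contribute equally; moreover the sign attached to a given tangency is exactly the $\mp\tfrac12$ of Figure \ref{fig:foliation}, according to whether the turn is compatible with the orientation of $\Sigma$. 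Carrying the factors $\tfrac1{2^k}$, $\tfrac12$ and $\nu^{\mathrm L}/2$ through, each tangency on $\gamma_i$ contributes $\pm\tfrac14\nu^{\mathrm L,i}$ with the sign of its rotation contribution, and summing over all tangencies gives $\tfrac12\,\mathrm{rot}_{\gamma_i}\,\nu^{\mathrm L,i}$. Pinning down the orientation conventions of Definition \ref{def:shifts1}, Definition \ref{def:foliation} and Figure \ref{fig:foliation} so that these signs land correctly is the only genuinely fiddly part; once it is settled, assembling the two parts shows that \eqref{eq:shifts1} equals \eqref{eq:chordsfromintersections}, which by Theorem \ref{thm:geometricBV} equals $\Delta\circ\hol_{\gamma_1,\dots,\gamma_k}$.
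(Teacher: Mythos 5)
Your overall strategy coincides with the paper's: reduce to Theorem \ref{thm:geometricBV} by matching \eqref{eq:shifts1} with \eqref{eq:chordsfromintersections}, splitting the intersections of the shifted/unshifted family into genuine crossings and foliation tangencies. Your interior-crossing analysis (two chords per shift configuration, with the orientation flip of the chord cancelling against the flip of $\alpha_p$, so the contributions add and the prefactor $\tfrac12$ yields coefficient $1$) and your tangency analysis (one short chord acting by $\nu^{\mathrm L}/2$, with signs matching the $\pm\tfrac12$ rotation contributions) are both correct and essentially identical to the paper's treatment of Figures \ref{fig:SP1} and \ref{fig:SP3}.

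The gap is at boundary crossings, and it sits exactly where the averaging over shift directions --- which you dismiss as trivial --- is indispensable. Your claim that in each shift configuration ``only one of the two nearby intersections survives'' is false. Place the crossing at a marked point $v$, with the two segments leaving $v$ transversally to $\partial\Sigma$ and the foliation tangent to $\partial\Sigma$ at $v$, so each segment is pushed to the left or to the right along the boundary direction. A short computation shows: if both segments are pushed to the same side, exactly one of the two crossings survives (this is the picture you describe); but if they are pushed to opposite sides, one gets either two crossings or none, depending on which way. Thus the count over the four configurations is $0,1,1,2$ (this is the content of Figure \ref{fig:SP2}), and it is only the \emph{average} that equals $1$, giving $\lambda_p=\tfrac12$ after the prefactor $\tfrac12$. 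This is precisely why Definition \ref{def:shifts1} averages over all $2^k$ shifts, and why the subsequent proposition can drop the average only when the foliation is orientable (a consistent orientation forces the same-side configurations). Relatedly, you never treat self-intersections at the boundary: there the two segments belong to one path, so their shift directions cannot be chosen independently --- they are correlated through the parity of the number of turns the foliation makes along the path (the column dichotomy of Figure \ref{fig:SP2}) --- and one must check that the average is $1$ in both parities. Without these two points your argument does not establish the proposition for general (non-orientable) foliations, i.e.\ in exactly the generality in which it is stated.
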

\begin{proof}
	An intersection point $p$ occurs either where two path segments intersect,
	or when the path becomes tangent to the foliation. For brevity, let us
	denote the choice of directions of the shifts by $C$.
	\medskip{}
	
	In the first case, if the intersection happens away from the boundary, 
	we get (see Figure \ref{fig:SP1}), for each $C$, two chords at the intersection, with all the 2 or 4 possible
	shifts giving the same answer.
	
\begin{figure}[h]
	\centering
	\includesvg{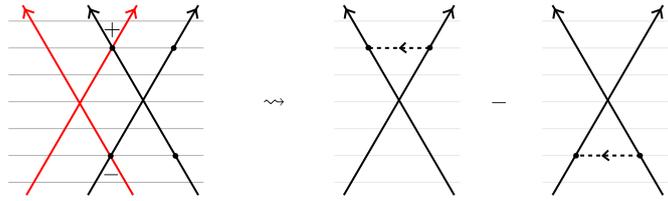}.
	\caption{Definition \ref{def:shifts1} applied near an intersection in the interior of $\Sigma$.}
	\label{fig:SP1}
\end{figure}
	
	Together with the factor $\tfrac12$ from Equation \eqref{eq:shifts1},  we get an agreement with Definition \ref{def:deltaintersections}.
	\smallskip{}
	
	If the intersection happens on the boundary, it is either
	 an intersection of two different paths, or a self-intersection.
	In the first case, the four cases on Figure \ref{fig:SP2}
	appear for different $C$.
	
\begin{figure}[h]
	\centering
	\includesvg{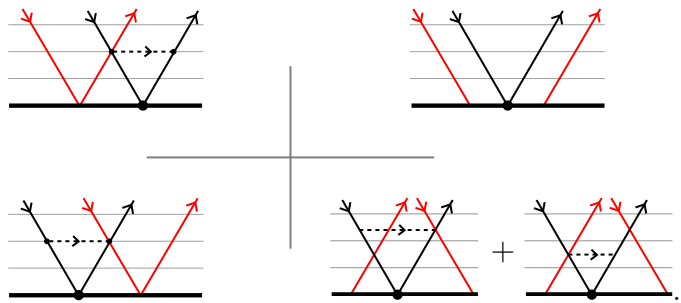}.
	\caption{Definition \ref{def:shifts1} applied near an intersection at the boundary of $\Sigma$.}
	\label{fig:SP2}
\end{figure}
	Here, we already oriented the chord  to absorb the possible sign
	$\alpha_{p}$; the terms also carry a factor $\tfrac12$.
	
	If the segments meeting at the boundary belong to the same path, there are two cases to distinguish:
    If the foliation makes an odd number of turns along the path, we get the 
	left column of the above figure, and for an even number of turns,
	we get the right column of the above figure.
	\smallskip
	
	Finally, if the path becomes tangent to the foliation, there is a contribution only
	in the cases if it looks like a local extremum, see Figure \ref{fig:SP3}.
	
\begin{figure}[h]
	\centering
	\includesvg{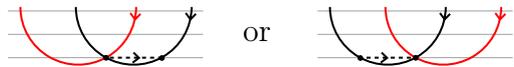}.
	\caption{Definition \ref{def:shifts1} when a path becomes tangent to the foliation.}
	\label{fig:SP3}
\end{figure}

	This short chord, together with the factor $\tfrac 12$, acts by $\nu/4$, as expected (compare with the proof of Proposition \ref{prop:reidemeister})
		\smallskip{}
		
	Since all of these cases have the same frequency among all possible $C$,
	we see that the average number of chords is $\tfrac 12$ in all three cases, which recovers the factor $\lambda_p$ from 
	Definition \ref{def:deltaintersections}.
	As before, the cases with different orientations of the intersecting segments
	follow from these, since changing an orientation multiplies the term 
	by $-1$.
\end{proof}

However, if the foliation is orientable\footnote{See Appendix \ref{app:foliation}.}, i.e. we can consistently choose a direction of the shift, we can remove the symmetrization from 
Equation \eqref{eq:shifts1}.
\begin{definition}
	Let $\gamma_{1}, \dots, \gamma_{k}$ be as in Definition \ref{def:shifts1}
	and assume that the foliation of $\Sigma$ is orientable. Choose one
	such orientation, shift all paths $\gamma$ along this vector field and define,
	as before, a function by summing over all intersections of the shifted and unshifted paths.
	\begin{equation}\label{eq:shifts2}
		\sum_{p \in \gamma\cap\gamma^{\mathrm{sh}}} \frac 12 \alpha_{p}\hol_{\gamma_{1}, \dots, \gamma_{k}}^{p_{0} \to p}.
	\end{equation}
	The sign $\alpha_{p}$ and the chord direction is as in Definition \ref{def:shifts1}.
\end{definition}
\begin{proposition}
	The function \eqref{eq:shifts2} from the above definition
	is equal to $\Delta\circ \hol_{\gamma_{1}, \dots, \gamma_{k}}$.
	Specifically, it does not depend on the sign of the orientation of the foliation. 
\end{proposition}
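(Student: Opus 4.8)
The plan is to prove \eqref{eq:shifts2}$\,=\Delta\circ\hol_{\gamma_{1}, \dots, \gamma_{k}}$ by matching it with the intersection formula \eqref{eq:chordsfromintersections}, which equals $\Delta\circ\hol_{\gamma_{1}, \dots, \gamma_{k}}$ by Theorem \ref{thm:geometricBV}, following the case analysis in the proof of Proposition \ref{prop:shifts1} but with the shift direction fixed throughout. Since the foliation is orientable, fixing one of its two orientations determines a single consistent shift of all paths, and \eqref{eq:shifts2} is precisely the corresponding term of the symmetrized average \eqref{eq:shifts1}. I would then check that this one term already reproduces \eqref{eq:chordsfromintersections}, and afterwards that the opposite orientation gives the same value. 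Note that I do not claim the consistent term equals the average over all $2^{k}$ choices, only that both equal $\Delta\circ\hol_{\gamma_{1}, \dots, \gamma_{k}}$.

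First I would treat the points away from the boundary. At an interior double point the shifted copy of each branch meets the unshifted other branch exactly once, producing two chords which, after the prefactor $\tfrac12$, give the weight $\lambda_{p}=1$; this contribution is insensitive to the shift direction. At a point where a path becomes tangent to the foliation, the shifted and unshifted copies cross once nearby, yielding a short chord; by the computation in the proof of Proposition \ref{prop:reidemeister} this short chord acts by $\nu/2$, so together with the factor $\tfrac12$ and the sign $\alpha_{p}$ (governed by whether the tangency is a local maximum or minimum relative to the shift) it contributes $\pm\nu/4$. Summing over all tangencies of $\gamma_{i}$ assembles these half-turns into exactly $\tfrac12\,\mathrm{rot}_{\gamma_{i}}\,\nu^{\mathrm L, i}$, matching the last term of \eqref{eq:chordsfromintersections}.

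The step I expect to be the main obstacle is the behaviour at the boundary. Because the foliation is tangent to $\partial\Sigma$ at each marked point, a consistent shift slides the endpoints along the boundary in a definite direction, and I would show that exactly one of the two a priori crossings $\gamma_{i}\cap\gamma_{j}^{\mathrm{sh}}$ and $\gamma_{j}\cap\gamma_{i}^{\mathrm{sh}}$ survives near the marked point. This is precisely what forced the symmetrization in the general case: the four sign combinations of the two branches produce $0,1,1,2$ crossings, whose average $1$ only reappears after averaging, whereas a consistent (non-mixed) choice lands on one of the two combinations giving a single crossing. With the prefactor $\tfrac12$ this lone crossing delivers the boundary weight $\lambda_{p}=\tfrac12$, as required.

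Finally, for independence of the sign of the orientation I would compare the two consistent shifts directly. The interior contributions are manifestly shift-independent. For a boundary crossing, and for a tangency, reversing the orientation interchanges which of the two branches realises the surviving crossing and simultaneously reverses both the intersection sign $\alpha_{p}$ and the orientation of the chord $\hol^{p_{0}\to p}_{\gamma_{1}, \dots, \gamma_{k}}$; by the antisymmetry of the chord stipulated in Definition \ref{def:chord} these two sign changes cancel, leaving each local contribution unchanged. Hence \eqref{eq:shifts2} does not depend on the chosen orientation of the foliation and equals $\Delta\circ\hol_{\gamma_{1}, \dots, \gamma_{k}}$.
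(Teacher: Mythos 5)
Your proposal is correct and follows essentially the same route as the paper: the paper likewise reduces to the case analysis of Proposition \ref{prop:shifts1}, observing that the oriented (hence consistent) shift excludes the mixed-shift cases at boundary intersections, so each such point contributes exactly one crossing and Definition \ref{def:deltaintersections} is recovered without averaging. Your additional direct check that the two orientations agree (via the simultaneous flip of $\alpha_{p}$ and of the chord orientation) is sound, though the paper obtains this for free, since either orientation reproduces the same intersection formula.
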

\begin{proof}
	The proof is similar to the proof of Proposition \ref{prop:shifts1},
	only the foliation is now oriented, which excludes the cases in right column,
	when the intersection happens at the boundary. The remaining cases
	recover Definition \ref{def:deltaintersections} without averaging.
\end{proof}
\section[Goldman-Turaev Lie bialgebra and Q(n)]{Goldman-Turaev Lie bialgebra and $Q(n)$}
In this section, we specialize to $G = Q(n)$, 
the queer Lie supergroup. This will allow us to relate the Goldman-Turaev Lie bialgebra with the BV operator on the moduli space of flat connections, extending the correspondence of the 
Goldman bracket and the Atiyah-Bott Poisson structure \cite{Goldman}.

\subsection[The Lie supergroup Q(n)]{The Lie supergroup $Q(n)$}\label{ssec:queerGT}
Let us recall the definition of the queer Lie supergroup $Q(n)$ (see \cite[{\S1.8}]{Leites2011seminar} for more details). 
\begin{definition}
	For $n \ge 1$, define the following associative algebra
	\[  q_{\mathrm{as}}(n) = \operatorname{Mat}_n(\mathbb R) \otimes \mathbb R[\xi] /(\xi^{2} - 1). \]
	This algebra is $\mathbb Z_{2}$-graded by setting $\xi$ to be odd. 
	The odd function $\mathrm{otr}$ on $q_{\mathrm{as}}(n)$ is defined by \[\mathrm{otr} (X + \xi Y) = \tr Y\]
	and is cyclically symmetric\footnote{The usual Koszul sign $(-1)^{\hdeg A_{1} \hdeg A_{2}}$
	is equal to $+1$, since $A_{1}$ and $A_{2}$ have opposite parity.}
	 $\mathrm{otr}(A_{1} A_{2}) = \mathrm{otr}(A_{2}A_{1})$.
	 
	We define $Q(n)$, to be the Lie supergroup of invertible elements associated to $q_{\mathrm{as}}(n)$. Its Lie superalgebra, denoted $\q(n)$,
	is the space $q_{\mathrm{as}}(n)$  with the bracket given by the graded commutator. The odd trace makes $\q(n)$ into an
	odd metric Lie algebra, with pairing given by $A_{1}\otimes A_{2}  \mapsto \mathrm{otr}(A_{1}A_{2})$.
\end{definition} 
The Lie superalgebra $\q(n)$ is unimodular, see Proposition \ref{prop:queerisdiscardy} and Remark \ref{rmk:DiscardyUnimodular}.

\subsection{Goldman-Turaev Lie bialgebra}
Any collection of $k$ loops $\gamma_{i}$ on $\Sigma$ gives a function $\hol_{\gamma_1, \dots, \gamma_k}(\otr \otimes \dots \otimes \otr)$ on the moduli space
$\ModSpace{\Sigma, V}{Q(n)}$, by taking a product of the odd traces of holonomies along $\gamma_{i}$. Our goal is now
to study the action of the BV operator $\Delta$ on such functions, to which end
we need to recall the Goldman-Turaev Lie bialgebra.

Recall from Section \ref{ssec:Goldman} that $\Goldman{\Sigma} = \mathbb{R} \pi_{1}^{\mathrm{free}}$ is the 
$\mathbb R$-vector space  generated by homotopy classes of free loops in an oriented surface $\Sigma$. The following two operations were defined by 
Goldman and Turaev \cite{Goldman, Turaev}. 
\begin{definition}\label{def:GoldmanTuraev} Let $\gamma_{1}$, $\gamma_{2}$ be two immersed loops on $\Sigma$ representing
their classes $|\gamma_{1}|, |\gamma_{2}|\in \Goldman{\Sigma}$ with transversal double intersections.
Their Goldman bracket is given by a sum over their intersections
\[ [  |\gamma_{1}| , |\gamma_{2}| ]_{\mathrm G} =  \sum_{p \in \gamma_{1}\cap \gamma_{2}} \beta_{p} \Bigg[\; \raisebox{-18pt}{\includesvg{images/goldman.svg}} \;\Bigg], \]
where we modify the loops only in a small disc around $p$, connecting them into one loop.
The sign $\beta_{p}$ is $+1$ iff the two tangent vectors $(\dot{\gamma}_1, \dot{\gamma}_2)$ at $p$ agree with the orientation of $\Sigma$.

The Turaev cobracket of $|\gamma_{1}|$ is defined as a sum over all self-intersections of $\gamma_{1}$
\[ \delta_{\mathrm T} |\gamma_{1}| = \sum_{p \text{ self-intersection of }\gamma_{1}} \raisebox{-13pt}{\includesvg{images/goldman.svg}}, \]
where we see the resulting two loops as lying in $\Goldman{\Sigma} \wedge \Goldman{\Sigma}$, with the first 
loop being the one starting to the right (this is fixed by the orientation of the surface).
\end{definition}
Goldman proved \cite{Goldman} that $(\Goldman{\Sigma}, [\cdot, \cdot]_\textnormal{G})$ is a well-defined Lie algebra. Moreover, the constant loop is in the center of 
$[\cdot,\cdot]_{\mathrm{G}}$ and on the quotient $\Goldmanred{\Sigma} = \Goldman{\Sigma}/\mathbb R \bigcirc$, the above bracket and cobracket give a well-defined Lie bialgebra by a result of Turaev \cite{Turaev}. Moreover, $(\Goldmanred{\Sigma}, [\cdot , \cdot ]_{\mathrm G}, \delta_{\mathrm T})$  is involutive, i.e. $[\cdot , \cdot ]_{\mathrm G} \circ \delta_{\mathrm T} = 0$, by a result of Chas \cite{Chas2004}. 
Therefore, one can define a BV algebra structure\footnote{There are two conventions for a definition of a BV algebra used in literature, either with $\Delta(xy) = \Delta(x)y + (-1)^{\hdeg{x}} x \Delta(y) + \{x, y\}$ or with $\Delta(xy) = \Delta(x)y + (-1)^{\hdeg{x}} x \Delta(y) + (-1)^{\hdeg{x}}\{x, y\}$. The bracket $\{\cdot, \cdot\}$ is then either graded-symmetric or satisfies $\{y, x\} = (-1)^{(\hdeg{x}+1)(\hdeg{y}+1)+1} \{x, y\}$, respectively. We use the first convention.} on the commutative superalgebra $\wedge \Goldmanred{\Sigma}$ as in Theorem \ref{thm:CEintro} {\cite[Sec.~5]{CMW2016}}. The BV operator $\Delta^{[\cdot,\cdot]_\mathrm G, \delta_\mathrm T}$ is explicitly given by 
\begin{equation}\label{eq:CEBV}
\begin{aligned}
\Delta^{[\cdot,\cdot]_\mathrm G, \delta_\mathrm T} (x_1, \dots, x_n) = 
	 &\sum_{i<j} (-1)^{i+j+1} [x_{i},x_{j}]_\mathrm G x_{1}  \dots \hat{x}_{i} \dots \hat{x}_{j} \dots x_{n}
	\\ + &\sum_{i} (-1)^{i-1}x_{1}  \dots  \delta_{\mathrm T}({x_{i}})  \dots  x_{n},\end{aligned}
\end{equation}
where $x_i \in \Goldmanred{\Sigma}$ and we omit the symbol $\wedge$.

	

\subsection{The odd Goldman map}
As we mentioned above, collection of \emph{loops} $\gamma_{1}, \dots, \gamma_{k}$ on $\Sigma${}
defines a function on $\ModSpace{\Sigma, V}{Q(n)}$ by taking the product of odd traces of all the holonomies.
Since this function depends on the order of the loops $\gamma_{i}$ only up to the sign 
of a permutation, we get a map $\ogm \colon \wedge \Goldmanred{\Sigma} \to \mathcal O( \ModSpace{\Sigma, V}{Q(n)})$.   We will now show that this map intertwines the natural BV operators on both sides, defined in \eqref{eq:CEBV} and Theorem \ref{thm:qBVFR}, respectively. 

\begin{theorem}\label{thm:holonomyisBVmap}
	Let $\ogm \colon \wedge \Goldmanred{\Sigma} \to \mathcal O( \ModSpace{\Sigma, V}{Q(n)})$ be the algebra
	map defined by sending the generators $\gamma$ to $\hol_{\gamma}(\otr)$. Then
	\begin{equation}\label{eq:PhiOddMorph}   \Delta \circ \ogm = \ogm \circ \Delta^{[\cdot,\cdot]_{\mathrm G}, 2\delta_{\mathrm T}} ,\end{equation}
	i.e. $\ogm$ is a map of (quasi-)BV algebras.
\end{theorem}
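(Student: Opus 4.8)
The plan is to reduce everything to the topological formula for $\Delta$ established in Theorem~\ref{thm:geometricBV} and then to match, crossing by crossing, the resulting chords against the bracket and cobracket terms of the Chevalley--Eilenberg-type operator $\Delta^{[\cdot,\cdot]_{\mathrm G},2\delta_{\mathrm T}}$ of Proposition~\ref{prop:CEBVcomplex}. The first simplification I would exploit is that $\q(n)$ is unimodular (Proposition~\ref{prop:queerisdiscardy}, Remark~\ref{rmk:DiscardyUnimodular}), so that $\nu=0$ and the entire rotation-number term in \eqref{eq:BVintersection} drops out; in particular the answer will not see the foliation. Since $\ogm$ depends only on free homotopy classes and $\otr$ is $\Ad$-invariant, I may represent each class by a generic immersed loop in the interior of $\Sigma$, so that all (self-)intersections are transverse interior double points and every $\lambda_p$ in Definition~\ref{def:deltaintersections} equals~$1$. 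After these reductions Theorem~\ref{thm:geometricBV} gives, for a product of loops, $\Delta\circ\hol_{\gamma_1,\dots,\gamma_k}(\otr^{\otimes k})=\sum_p \hol^{\delta_p}_{\gamma_1,\dots,\gamma_k}(\otr^{\otimes k})$, one chord per crossing.

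Next I would organize the crossings into two types and argue that it suffices to treat each in isolation. Both $\Delta$ (Theorem~\ref{thm:qBVFR}) and $\Delta^{[\cdot,\cdot]_{\mathrm G},2\delta_{\mathrm T}}$ (Proposition~\ref{prop:CEBVcomplex}) are odd second-order operators annihilating~$1$, and $\ogm$ is a morphism of commutative superalgebras; since $\wedge\Goldmanred{\Sigma}$ is freely generated, \eqref{eq:PhiOddMorph} is equivalent to the two statements that $\ogm$ intertwines the unary parts and that the induced odd brackets correspond. Concretely I would check: (a) for a single loop $\gamma$ with one self-intersection, the chord contribution equals $2\,\ogm(\delta_{\mathrm T}\gamma)$; and (b) for two loops $\gamma_1,\gamma_2$ meeting once, it equals $\ogm([\,|\gamma_1|,|\gamma_2|\,]_{\mathrm G})$. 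In both cases the crossing lies on two segments meeting at a marked point, so the chord inserts the two legs of $t=(-1)^{\hdeg{e_i}}t^{ij}e_i\otimes e_j$ on the two outgoing half-edges, as in Definition~\ref{def:chord}.

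The heart of the argument is the explicit evaluation of these two chords in coordinates. Writing holonomies as $X+\xi Y$ with $X,Y\in\operatorname{Mat}_n$, and using the Casimir $t=\sum_{\alpha\beta}(E_{\alpha\beta}\otimes\xi E_{\beta\alpha}+\xi E_{\alpha\beta}\otimes E_{\beta\alpha})$ dual to the pairing $\langle A_1,A_2\rangle=\otr(A_1A_2)$, the two-loop chord factorizes through the two separate odd traces and collapses, by the ``joining'' identity for $t$, to $\otr$ of the connected-sum loop; comparing with Definition~\ref{def:GoldmanTuraev} this is exactly $\ogm$ of the Goldman term, with the orientation sign $\beta_p$ matching the chord sign. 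For a self-intersection, however, the two insertions lie on a single trace separated by the holonomy $C$ of one lobe, and the computation is genuinely graded: transporting the odd leg of $t$ across the mixed-parity holonomy $C$ produces a relative Koszul sign between the two summands of $t$. This sign cancels the spurious even-trace contribution $\tr(C_0)\tr(B_0)$ and \emph{doubles} the wanted term, giving $2\,\otr(L_1)\otr(L_2)=2\,\ogm(\delta_{\mathrm T}\gamma)$ for the two lobes $L_1,L_2$ ordered by the surface orientation. This is precisely the mechanism forcing the coefficient $2$ in front of $\delta_{\mathrm T}$ in \eqref{eq:PhiOddMorph}.

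The step I expect to be the main obstacle is exactly this self-intersection identity: keeping the Koszul signs, the chord orientation, and the half-edge order $a<b$ mutually consistent so that the even-trace terms cancel while the odd-trace terms add to give precisely $2$, rather than $0$ or an uncontrolled multiple. I would pin this down on one model crossing (a figure-eight based at the crossing, holonomy $BC$), computing $\sum T^{ij}\,\otr(B\,e_i\,C\,e_j)$ with the super-braiding dictated by the diagram of Definition~\ref{def:chord}, and then transport the outcome to every other local configuration using the homotopy invariance already recorded in Proposition~\ref{prop:reidemeister}. Once (a) and (b) are verified, that homotopy invariance together with the second-order property lets me sum over all crossings and conclude \eqref{eq:PhiOddMorph} for arbitrary products of loops.
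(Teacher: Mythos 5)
Your proposal follows essentially the same route as the paper's proof: reduce via the topological formula of Theorem \ref{thm:geometricBV} (with the rotation term dropped by unimodularity of $\q(n)$), then match crossings term by term --- two-loop intersections against the Goldman bracket using invariance of the odd trace, and self-intersections against $2\delta_{\mathrm T}$ by an explicit $Q(n)$ computation; your ``model crossing'' evaluation $\sum t^{ab}\otr(B\,e_a\,C\,e_b)$, with the even-trace terms cancelling and the odd-trace terms doubling, is precisely the paper's identity \eqref{eq:discardyforqn} proved in Proposition \ref{prop:queerisdiscardy}. One small correction: the invariant element is $t=\sum_{\alpha,\beta}\bigl(E_{(\alpha\beta)}\otimes\xi E_{(\beta\alpha)}-\xi E_{(\alpha\beta)}\otimes E_{(\beta\alpha)}\bigr)$ (relative minus sign between the two summands, not plus), and the self-intersection identity is genuinely special to $q_{\mathrm{as}}(n)$ --- it uses the elementary-matrix completeness relation and fails for general unimodular odd-metric superalgebras, which is exactly why the paper isolates it as a separate lemma rather than deriving it from Koszul bookkeeping alone.
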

Note that in order to get an agreement, we need to use $2\delta_{\mathrm T}$ as a cobracket on $\Goldmanred{\Sigma}$ in \eqref{eq:CEBV}.

\begin{proof}

	It will be simpler to consider, instead of the supergroup $Q(n)$, a more general unimodular Lie supergroup $G$ obtained as the supergroup of invertible elements of an associative superalgebra $A$ with an invariant, non-degenerate odd trace $\otr$. Similarly to $\mathfrak{q}(n)$, let us denote by $\{e_{i}\}$ a basis of $A$ and by $\{\phi^i\}$ the dual basis of $A^*$. Let us also introduce the structure constants $c_{ij}^k$ by $e_i e_j = c_{ij}^k e_k$, cyclically-symmetric coefficients $t_{{i_1}\dots {i_n}} = \otr{e_{i_1}\dots e_{i_n}}$ and the inverse of the pairing $t = (-1)^\hdeg{e_i} t^{ij}e_i \ot e_j \in A\ot A$ with $t^{ij}t_{jk} = \delta^i_k$. In our conventions for supegroups, we have for the coproduct $\square \phi^i = (-1)^{\hdeg{\phi^j}\hdeg{\phi^k}} c^i_{jk} \phi^j \ot \phi^k$ and for the left action of $A$, seen as a Lie algebra of $G$, $(e_a)^\mathrm L \phi^i = (-1)^{\hdeg{e_a}} c^i_{ja}\phi^j$.  
	\medskip
	
	Both sides of Equation \eqref{eq:PhiOddMorph}, when applied to $\gamma_1 \wedge \dots \wedge \gamma_k$, are a sum over all (possibly self-) intersections of loops; we will prove the equality \eqref{eq:PhiOddMorph} term-by-term. Moreover, we can permute both sides such that the BV operators act on the first two loops for the case of an intersection, or the first loop in the case of a self-intersection. Let us treat these cases separately.
	\medskip
	
\begin{figure}[h]
	\centering
	\begin{subfigure}[b]{0.35\textwidth}
		\begin{center}
			\includesvg{images/labelledgoldmanchord.svg}
		\end{center}
		\caption{A chord at an intersection of two loops}
		\label{fig:ChordGoldman}
	\end{subfigure}
	\qquad  
	\begin{subfigure}[b]{0.35\textwidth}
		\begin{center}
			\includesvg{images/goldmanchordfun.svg}
		\end{center}
		\caption{The corresponding function on the moduli space}
		\label{fig:ChordGoldmanFun}
	\end{subfigure}
	\caption{The term of the LHS of \eqref{eq:PhiOddMorph} corresponding to an intersection of two loops.}\label{fig:ChordGoldmanBoth}
\end{figure}

\begin{figure}[h]
	\centering
	\begin{subfigure}[b]{0.35\textwidth}
		\begin{center}
			\includesvg{images/labelledgoldman.svg}
		\end{center}
		\caption{A resolution of intersection from the Goldman bracket}
		\label{fig:BracketGoldman}
	\end{subfigure}
	\qquad  
	\begin{subfigure}[b]{0.35\textwidth}
		\begin{center}
			\includesvg{images/goldmanbracketfun.svg}
		\end{center}
		\caption{The corresponding function on the moduli space}
		\label{fig:BracketGoldmanFun}
	\end{subfigure}
	\caption{The term of the RHS of \eqref{eq:PhiOddMorph} corresponding to an intersection of two loops.}\label{fig:BracketGoldmanBoth}
\end{figure}

	\textbf{intersection of two loops:} Using Theorem \ref{thm:geometricBV}, we get on the LHS of \eqref{eq:PhiOddMorph} the chord diagram as shown on Figure \ref{fig:ChordGoldman}. Using Definition \ref{def:chord}, this term is equal to the function on Figure \ref{fig:ChordGoldmanFun}. The RHS of \eqref{eq:PhiOddMorph} is given by the Goldman bracket from Definition \ref{def:GoldmanTuraev}, i.e. the holonomy of the loop on Figure \ref{fig:BracketGoldman}. This loop is (up to cyclic permutation) equal to $\gamma'_1\gamma''_1\gamma'_2\gamma''_2$ which gives the function on Figure \ref{fig:BracketGoldmanFun}. 
	
	Our goal is to prove the equality of the two function on Figures \ref{fig:ChordGoldmanFun} and \ref{fig:BracketGoldmanFun}. Let us consider the parts of the diagrams below the box marked $\text{hol}'$, which can be seen as odd elements of $(A^*)^{\ot 4}\subset \mathcal O(G)^{\ot 4}$.  Concretely, from Figure \ref{fig:ChordGoldmanFun} we get
	\[ (-1)^{\hdeg{e_a} +\hdeg{e_b} }t^{ab}t_{ij}t_{kl} (e_a)^\mathrm L  \phi^i \otimes \phi^j \otimes  (e_b)^\mathrm L \phi^k \otimes \phi^l, \]
	while from Figure \ref{fig:BracketGoldmanFun} we get
	\[ t_{jilk} \phi^i \otimes \phi^j \otimes \phi^k \otimes \phi^l. \]
	The equality of these tensors can be proven directly using the invariance of the odd trace. Alternatively, we can see both sides as maps $A^{\otimes 4}\to \Pi^{\ot 3} \cong \Pi$, and prove the identity diagramatically (taking care with signs), getting 
	\[ \raisebox{-0.42\height}{\includesvg{images/C1comp.svg}}. \]
 	To obtain the left-most diagram, we use the fact that acting by $(e_a)^\mathrm L$ on a linear function corresponds to right-multiplication by $e_a$. Then, the first equality follows from the invariance of the odd trace, while the second equality follows from cancellation of the pairing $e_i\ot e_j \mapsto t_{ij} = \otr(e_i e_j)$ and $t = (-1)^\hdeg{e_i} t^{ij}e_i \ot e_j$.

\begin{figure}[h]
	\centering
	\begin{subfigure}[b]{0.35\textwidth}
		\begin{center}
			\includesvg{images/labelleturaevchord.svg}
		\end{center}
		\caption{A chord at a self-intersection.}
		\label{fig:ChordTuraev}
	\end{subfigure}
	\qquad  
	\begin{subfigure}[b]{0.35\textwidth}
		\begin{center}
			\includesvg{images/turaevchordfun.svg}
		\end{center}
		\caption{The corresponding function on the moduli space, from Definition \ref{def:deltaintersections}.}
		\label{fig:ChordTuraevFun}
	\end{subfigure}
	\caption{The term of the LHS of \eqref{eq:PhiOddMorph} corresponding to a self-intersection.}\label{fig:ChordTuraevBoth}
\end{figure}

\begin{figure}[h]
	\centering
	\begin{subfigure}[b]{0.35\textwidth}
		\begin{equation*}
		2\bigg( \raisebox{-0.4\height}{\includesvg{images/labelledturaev.svg}} \bigg)
		\end{equation*}
		\caption{A resolution of intersection from the Turaev cobracket. The factor $2$ is introduced in \eqref{eq:PhiOddMorph}. The loop on the right is the first one in the wedge product.}
		\label{fig:CobracketTuraev}
	\end{subfigure}
	\qquad  
	\begin{subfigure}[b]{0.35\textwidth}
		\begin{center}
			\includesvg{images/turaevcobracketfun.svg}
		\end{center}
		\caption{The corresponding function on the moduli space.}
		\label{fig:CobracketTuraevFun}
	\end{subfigure}
	\caption{The term of the RHS of \eqref{eq:PhiOddMorph} corresponding to a self-intersection.}\label{fig:CobracketTuraevBoth}
\end{figure}

	\textbf{self-intersection:} Here, the situation is analogous, with the loops and corresponding functions shown on Figures \ref{fig:ChordTuraevBoth} and \ref{fig:CobracketTuraevBoth}. These give the following elements of $(A^*)^{\ot 3}$: 
	\[ (-1)^{\hdeg {e_a} + \hdeg{e_b}\hdeg{\phi^i} + \hdeg{\phi^i}\hdeg{\phi^j}} t^{ab} t_{ijk} (e_a)^\mathrm L \phi^i \ot (e_b)^\mathrm L \phi^j \ot \phi^k \]
	and
	\[ (-1)^{\hdeg{\phi^k}} t_{ik} t_j \phi^i \ot \phi^j \ot \phi^k \]
	respectively. Again, one can proceed in coordinates, or diagramatically:
	\[\raisebox{-0.42\height}{\includesvg{images/C2comp.svg}}.\] 
	The last equality does not hold in general, but for $G = Q(n)$ follows from the identity
	\begin{equation}\label{eq:discardyforqn}
	\raisebox{-0.42\height}{\includesvg{images/discardyqn.svg}},
	\end{equation}
	which is proven in Proposition \ref{prop:queerisdiscardy}.
\end{proof}


\begin{proposition}\label{prop:queerisdiscardy}
In the algebra $q_{\mathrm{as}}(n)$, the identity \eqref{eq:discardyforqn} holds.
\end{proposition}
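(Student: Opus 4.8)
The plan is to unwind the diagrammatic identity \eqref{eq:discardyforqn} into a concrete statement about the associative algebra $q_{\mathrm{as}}(n)$ and to verify it in the explicit matrix-unit basis. Read algebraically, the diagram asserts the ``discard'' property of the copairing: writing $t = (-1)^{\hdeg{e_i}}t^{ij}e_i\otimes e_j$ and applying the multiplication to its two legs around an inserted element $Y$, one should obtain a multiple of $\otr(Y)\cdot \id$. Concretely, the content is the identity $\sum_{a,b}t^{ab}\,e_a\,Y\,e_b = 2\,\otr(Y)\cdot \id$ for odd $Y$ (equivalently, after capping with $\otr(X\,(-)\,Z)$, the factorization $\sum_{a,b}t^{ab}\,\otr(X e_a Y e_b Z) = 2\,\otr(XZ)\,\otr(Y)$), which is exactly the last equality needed in the self-intersection step of Theorem \ref{thm:holonomyisBVmap}.

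First I would fix the homogeneous basis of $q_{\mathrm{as}}(n) = \operatorname{Mat}_n(\mathbb{R})\otimes\mathbb{R}[\xi]/(\xi^2-1)$ given by the even elements $E_{\alpha\beta}$ and the odd elements $\xi E_{\alpha\beta}$, and record the two structural facts that drive everything: $\xi$ is central with $\xi^2 = 1$, and $\otr(E_{\alpha\beta}) = 0$ while $\otr(\xi E_{\alpha\beta}) = \delta_{\alpha\beta}$. From these the pairing $\otr(e_i e_j)$ is immediate: it pairs $E_{\alpha\beta}$ with $\xi E_{\beta\alpha}$ (and vice versa) with value $1$, and all even-even and odd-odd pairings vanish. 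Inverting this permutation gives the copairing explicitly, $\sum_{a,b} t^{ab}\,e_a\otimes e_b = \sum_{\alpha\beta}\big(E_{\alpha\beta}\otimes\xi E_{\beta\alpha} + \xi E_{\alpha\beta}\otimes E_{\beta\alpha}\big)$.

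The engine of the computation is the matrix completeness relation $\sum_{\alpha\beta}E_{\alpha\beta}\,M\,E_{\beta\alpha} = \tr(M)\cdot \id$, valid for any $M\in\operatorname{Mat}_n(\mathbb{R})$; this is precisely the feature of the full matrix algebra (its being a special symmetric Frobenius algebra) that fails for a general odd-metric algebra and is responsible for the identity being special to $Q(n)$. Substituting the explicit copairing and using $\xi^2 = 1$ to absorb the factors of $\xi$, each of the two terms of the copairing collapses, via completeness, to $\tr(M)\cdot \id$ when $Y = \xi M$; the two terms add to give $2\,\otr(Y)\cdot \id$, and this factor $2$ is exactly the one accounted for by using $2\delta_{\mathrm T}$ in \eqref{eq:PhiOddMorph}. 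Capping with $\otr(X\,(-)\,Z)$ then yields the factorized right-hand side $2\,\otr(XZ)\,\otr(Y)$.

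The main obstacle is not this short computation but the careful bookkeeping of Koszul signs needed to match the diagram \eqref{eq:discardyforqn} (equivalently the signs $(-1)^{\hdeg{e_a}+\hdeg{e_b}\hdeg{\phi^i}+\hdeg{\phi^i}\hdeg{\phi^j}}$ attached to the self-intersection term). The crux is to check that, after contracting the structure constants $c^i_{ma}$, $c^j_{nb}$ against $t_{ijk}$ and summing over both parities of $e_a$, the even-$e_a$ and odd-$e_a$ contributions carry the \emph{same} sign and hence add, rather than cancel as they would under a naive global factor $(-1)^{\hdeg{e_a}}$. I expect this sign agreement to hold precisely on the parity sector $\hdeg{e_n}=1$ singled out by $t_n = \otr(e_n)$, and confirming it — together with the conceptual point that matrix completeness is what converts the Casimir insertion into an honest odd trace — is the heart of the proof.
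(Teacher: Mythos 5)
Your proposal is correct, and its engine is the same as the paper's: the completeness relation $\sum_{\alpha\beta}E_{\alpha\beta}\,M\,E_{\beta\alpha}=\tr(M)\,1_{n\times n}$ for elementary matrices, combined with the centrality of $\xi$ and $\xi^{2}=1$. The difference is purely organizational. You compute directly in $q_{\mathrm{as}}(n)$ in the basis $\{E_{\alpha\beta},\,\xi E_{\alpha\beta}\}$, which leaves you facing the Koszul signs over the whole $2n^{2}$-dimensional basis --- the step you explicitly defer. The paper instead factorizes $q_{\mathrm{as}}(n)\cong q_{\mathrm{as}}(1)\otimes\operatorname{Mat}_{n}(\mathbb R)$: it verifies \eqref{eq:discardyforqn} with all signs for $q_{\mathrm{as}}(1)$ (a two-case computation with $t=1\otimes\xi-\xi\otimes 1$), proves the purely even completeness relation for $\operatorname{Mat}_{n}(\mathbb R)$, and then observes diagrammatically that the identity is stable under tensor product of algebras; this factorization is precisely the device that confines the sign-sensitive bookkeeping to a two-dimensional computation. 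In your direct route the deferred check is routine and your expectation is right: writing the identity in coordinates as the paper does, $-\tfrac12(-1)^{\hdeg{e_a}+\hdeg{e_b}\hdeg{e_i}}\,t^{ab}c^{j}_{aib}=t_{i}u^{j}$, one finds that for odd $e_i$ both parity sectors of $(e_a,e_b)$ carry the sign $-1$, so the two completeness contributions add and the prefactor $-\tfrac12$ converts your $2\,\otr(Y)\cdot 1$ into $\otr(Y)\cdot 1$; for even $e_i$ the two sectors carry opposite signs and cancel, matching $\otr(e_i)=0$. Note that your unsigned sum $\sum_{a,b}t^{ab}e_{a}Ye_{b}$ does \emph{not} vanish for even $Y$, so the restriction to odd $Y$ alone does not exhaust \eqref{eq:discardyforqn} --- the even sector, handled by sign cancellation, is genuinely part of the statement. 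One small caution: the factor $2$ you produce is absorbed by the $\tfrac12$ internal to the identity itself, so attributing it to the normalization $2\delta_{\mathrm T}$ in \eqref{eq:PhiOddMorph} conflates two independent normalizations.
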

\begin{proof}
	In the notation of the proof of Theorem \ref{thm:holonomyisBVmap}, the identity reads
	\[ -\frac 12 (-1)^{\hdeg {e_a} + \hdeg{e_b}\hdeg{e_i}} t^{ab} c^j_{aib} = t_i u^j, \]
	where $u^je_j\in q_{\mathrm{as}}(n)$ is the unit of the algebra and $e_a e_i e_b= c^j_{aib} e_j$ are the structure constants of the iterated product.
 	\medskip
 	
	For $q_{\mathrm{as}}(1)$, we have $t  =  1\otimes \xi - \xi \otimes 1$,
	and the identity holds since
	\begin{align*}
		-\frac 12 (-1\cdot \xi \cdot \xi - (-1)^{1\cdot 1 } \xi \cdot \xi \cdot 1) = 1 
		&\overset{\checkmark}{=} \mathrm{otr}(\xi) 1, \\
		-\frac 12 (1\cdot 1 \cdot \xi - \xi \cdot 1 \cdot 1) = 0 & \overset{\checkmark}{=} \mathrm{otr}(1) 1.
	\end{align*}
	 The algebra $q_{\mathrm{as}}(n)$ is obtained by tensoring $q_{\mathrm{as}}(1)$ with the algebra $\operatorname{Mat}_n(\mathbb R)$,
	  which satisfies $\sum_{ab}S^{ab}E_{a} X E_{b} = \tr(X) 1_{n\times n}$
	 where $E_{a}$ is a basis of $\operatorname{Mat}_n(\mathbb R)$ and $S^{ab}$ the inverse
	 to $\tr(E_{a}E_{b})$. This is true because a basis $E_{a}$ consists of elementary 
	 matrices, with $a = (\alpha \beta)$ being a pair of indices. 
	 Then the matrix product $S^{ab}E_{a} X E_{b}$ is a matrix with $X_{\beta\beta}$ on
	 the position $(\alpha, \alpha)$ and zeros elsewhere, and summing over all $\alpha${}
	 and $\beta$ gives the identity matrix times the trace of $X$.
	 
	 The tensor product of two such algebras again satisfies \eqref{eq:discardyforqn},
	 which is immediate diagramatically
	 \begin{equation*}
		\raisebox{-0.42\height}{\includesvg{images/doublediscardy.svg}}.
	 \end{equation*}
	 Here $q_{\mathrm{as}}(1)$ is represented by the solid line and  $\operatorname{Mat}_n(\mathbb R)$ by the dash-dotted line.
\end{proof}
\begin{remark} \label{rmk:DiscardyUnimodular}
	Given an associative superalgebra where the identity \eqref{eq:discardyforqn} holds, its commutator Lie superalgebra is unimodular. This can be seen by precomposing the identity \eqref{eq:discardyforqn} with the unit.
\end{remark}

\subsection[Odd determinants and H1(Sigma)]{Odd determinants and $H_1(\Sigma)$}
Let us extend the Lie bialgebra structure from $\Goldmanred{\Sigma}$ to $\Goldmanred{\Sigma}\oplus H_1(\Sigma, \mathbb R)$, 
as in Section \ref{ssec:Goldman}. 
\begin{definition}
	On $\Goldmanred{\Sigma}\oplus H_1(\Sigma)$, define a Lie bracket $[\cdot, \cdot]_\textnormal{G}$ as in Definition \ref{def:Goldmandet}, with $\bigcirc=0$. 
	The cobracket $\delta_\textnormal{T}$ is extended from $\delta_{\mathrm T}$ by $0$ on $H_1(\Sigma)$.
\end{definition}
It is easy to check that one obtains again an involutive Lie bialgebra, and thus $\wedge \left( \Goldmanred{\Sigma}\oplus H_1(\Sigma, \mathbb R) \right)$ 
becomes a BV algebra using \eqref{eq:CEBV}.

The role of the function $\log \det$ from Section \ref{ssec:Goldman} will be played by the following function:
\begin{definition} \label{def:odet}
	The odd determinant $\odet \colon Q(n) \to \Pi$ is the odd function
	on $Q(n)$ defined by 
	\begin{equation}\label{eq:odetdef}\odet (X+ \xi Y) = \sum_{j\ge 1 \; \mathrm{odd}} \frac{\tr( (X^{-1}Y)^{j} )}{j}\,.\end{equation}
\end{definition}
The odd determinant satisfies 
\begin{equation}\label{eq:odetadd}
\odet(GH) = \odet(G) + \odet(H),
\end{equation}
and thus is invariant \cite[Theorem~1.8.5]{Leites2011seminar}.
\begin{proposition}
	Define an algebra map $$\ogmex \colon \wedge ( \Goldmanred{\Sigma}\oplus H_1(\Sigma, \mathbb R) ) \to  \mathcal O( \ModSpace{\Sigma, V}{Q(n)})$$
	by extending $\ogm$ via
	\[\ogmex (a) = \hol_{\gamma_a}(\odet), \]
	where $\gamma_a\in\pi_1(\Sigma)$ represents $a\in H_1(\Sigma) \cong \pi_1(\Sigma)^\mathrm{ab}$. Then $\ogmex$ is a map of (quasi-)BV algebras, with respect to
	$\Delta$ on $ \mathcal O( \ModSpace{\Sigma, V}{Q(n)})$ and $\Delta^{[\cdot, \cdot]_\textnormal{G}, 2\delta_\textnormal{T}}$ on $\wedge \left( \Goldmanred{\Sigma}\oplus H_1(\Sigma, \mathbb R) \right)$.
\end{proposition}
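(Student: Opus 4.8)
The plan is to exploit that $\ogmex$ is by construction an algebra map and that both $\Delta$ and $\Delta^{[\cdot,\cdot]_\textnormal{G}, 2\delta_\textnormal{T}}$ are second-order differential operators annihilating the unit. Such an operator is determined by its restriction to generators together with the induced bracket on pairs of generators, so it suffices to verify the morphism identity \eqref{eq:PhiOddMorph}, extended to $H_1(\Sigma,\mathbb R)$, on single generators and on products of two generators. On generators and pairs coming from $\Goldmanred{\Sigma}$ the identity is exactly Theorem \ref{thm:holonomyisBVmap}, so the only new cases are those involving a homology class $a\in H_1(\Sigma,\mathbb R)$: the single generator $a$, a pair $|\gamma|\wedge a$, and a pair $a\wedge b$.

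The computational input is an analogue of \eqref{eq:derlogdet}. The additivity \eqref{eq:odetadd} shows that $\odet\colon Q(n)\to\Pi$ is a group homomorphism, whence its left-invariant derivative is the constant $(x)^{\mathrm L}\hol_{\gamma_a}(\odet)=\otr(x)$, independent of the point of the moduli space. Moreover the odd trace is represented by the unit under the pairing, $\otr(x)=\langle 1,x\rangle$, and $\otr(1)=0$ since $1$ is even. I will feed these two facts into Theorem \ref{thm:geometricBV}, which expresses $\Delta$ on a holonomy function as a sum of chord contributions over (self-)intersections plus a rotation term; the rotation term vanishes because $\q(n)$ is unimodular.

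For the single generator $a$ one has $\Delta^{[\cdot,\cdot]_\textnormal{G}, 2\delta_\textnormal{T}}(a)=2\delta_\textnormal{T}(a)=0$, so I must show $\Delta\,\hol_{\gamma_a}(\odet)=0$. By Theorem \ref{thm:geometricBV} each self-intersection of $\gamma_a$ contributes a chord whose two ends both differentiate $\odet$, producing the scalar $(-1)^{\hdeg{e_i}}t^{ij}\otr(e_i)\otr(\Ad_\delta e_j)$; invariance of $\otr$ removes the $\Ad_\delta$, and the contraction collapses to $\otr(1)=0$. The same mechanism disposes of the pair $a\wedge b$, whose bracket $[a,b]_\textnormal{G}=\langle a,b\rangle\bigcirc$ is zero with $\bigcirc=0$: every intersection of $\gamma_a$ and $\gamma_b$ yields a chord with both ends on $\odet$-functions, again proportional to $\otr(1)=0$. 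Finally, for the pair $|\gamma|\wedge a$ the relevant term of Theorem \ref{thm:geometricBV} is the sum over intersections of $\gamma$ and $\gamma_a$; at each such point the $\odet$-end contributes $\otr(\Ad_\delta e_j)=\otr(e_j)=\langle 1,e_j\rangle$, and the contraction $(-1)^{\hdeg{e_i}}t^{ij}\langle 1,e_j\rangle\,(e_i)^{\mathrm L}$ reduces to $(1)^{\mathrm L}$ acting on $\ogm_{|\gamma|}=\hol_\gamma(\otr)$. Since $\otr(e^{t}g)=e^{t}\otr(g)$, this gives $(1)^{\mathrm L}\hol_\gamma(\otr)=\hol_\gamma(\otr)$, so each intersection yields $\pm\ogm_{|\gamma|}$ and the signed count is $\langle[\gamma],a\rangle\,\ogm_{|\gamma|}=\ogmex([|\gamma|,a]_\textnormal{G})$, matching Definition \ref{def:Goldmandet}.

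The main obstacle I anticipate is sign bookkeeping: verifying that the contracted scalars are genuinely $\otr(1)$ and $(1)^{\mathrm L}$ with the correct signs requires tracking Koszul signs through $t=(-1)^{\hdeg{e_i}}t^{ij}e_i\otimes e_j$, the graded symmetry of the pairing, and the chord conventions of Definition \ref{def:chord}, exactly as in the diagrammatic identities used in the proof of Theorem \ref{thm:holonomyisBVmap}. I would isolate two small diagrammatic lemmas, namely that $\otr$ at both ends of a chord produces $\otr(1)=0$ and that $\otr$ at one end reduces the chord to the identity action, as the odd counterparts of \eqref{eq:derlogdet}; the three cases above then follow formally, and the verification that $\wedge(\Goldmanred{\Sigma}\oplus H_1(\Sigma,\mathbb R))$ with these extended operations is still an involutive Lie bialgebra is immediate.
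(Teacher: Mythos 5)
Your proposal follows essentially the same route as the paper's proof: the same reduction to the three new cases involving $H_1(\Sigma,\mathbb R)$, the same key lemma that additivity \eqref{eq:odetadd} makes the left-invariant derivative of $\odet$ the constant function $\mp\otr(x)$ (the odd analogue of \eqref{eq:derlogdet}, stated in the paper as \eqref{eq:odetLinv}), and the same mechanism for the mixed case $|\gamma|\wedge a$. Your treatment of that mixed case is basis-free — contracting one leg of $t$ with $\otr(\cdot)=\langle 1,\cdot\rangle$ produces the unit of $q_{\mathrm{as}}(n)$, which acts as the identity on $\hol_\gamma(\otr)$ — whereas the paper computes in the explicit basis $E_{(\alpha\beta)}, \xi E_{(\alpha\beta)}$ and observes that only $\sum_\alpha E_{(\alpha\alpha)}$ survives the contraction; the two computations are equivalent, and yours is arguably cleaner.

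One intermediate step, however, is structurally wrong, even though its conclusion survives. For the single generator $a$, a chord at a self-intersection of $\gamma_a$ does \emph{not} produce the product of first derivatives $(-1)^{\hdeg{e_i}}t^{ij}\otr(e_i)\otr(\Ad_\delta e_j)$: that expression is the form of the contribution when the two chord ends sit on two \emph{different} functions, i.e. your $a\wedge b$ case. By Definition \ref{def:chord} (the configuration of Figure \ref{fig:ChordDefOn1}), a chord lying on a single path acts through the triple coproduct of the one function $\odet$, with the two chord legs differentiating two of the three tensor factors. The correct argument — which is the paper's — is that additivity makes $\odet$ primitive, $\square\,\odet = \odet\otimes 1 + 1\otimes\odet$, so every term of the iterated coproduct carries the constant function $1$ in at least one of the two differentiated legs and is therefore annihilated. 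Your final answer ($0$) agrees, but as written the self-intersection computation borrows the wrong chord structure; replacing it by the primitivity argument is immediate and changes nothing else in your proof.
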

\begin{proof}
	Let us start by stating the odd analogue\footnote{Equation \eqref{eq:derlogdet} says, in other terms, that the Lie algebra morphism $\operatorname{Tr}\colon \mathfrak{gl}(n) \to \mathbb R$ is the differential of the Lie group morphism $\log\det\colon GL_n(\mathbb R)^+ \to \mathbb R$. Similarly, \eqref{eq:odetder} says that the Lie superalgebra morphism $\otr \colon \mathfrak q(n)\to \Pi$ is the differential of the Lie supergroup morphism $\odet \colon Q(n) \to \Pi$.} of Equation \eqref{eq:derlogdet}: an element $x\in \mathfrak{q}(n)\cong T_e Q(n)$, i.e. a derivative at the group identity $e\in Q(n)$, satisfies
	\begin{equation}\label{eq:odetder} x(\odet) = -\otr x. \end{equation}
	This is easily seen from Definition \ref{def:odet}: only the term $j=1$ of the sum \eqref{eq:odetdef} is linear in the odd coordinate, and can have a non-zero contribution when evaluated at $e\in Q(n)$. The sign comes from our convention for the isomorphism $\mathfrak{q}(n)\cong T_e Q(n)$; we identify $x\in \mathfrak q(n)$ with the derivation at $e$ given as $\phi^i \mapsto (-1)^{\hdeg{\phi^i}} \phi^{i}(x)$.
	\medskip
	
	Equations \eqref{eq:odetadd} and \eqref{eq:odetder} imply, for the left-invariant action of $x\in \mathfrak{q}(n)$
	\begin{equation}\label{eq:odetLinv} x^\mathrm{L} \odet  =- \otr x,  \end{equation}
	i.e. a constant function on $Q(n)$. Similarly, again using \eqref{eq:odetadd}, we get
	\[  \raisebox{-0.42\height}{\includesvg{images/odetcoprL.svg}}. \]
	
	Now, we can prove that the extended map $\ogmex$ is a map of quasi-BV algebras. There are three new cases to consider, containing loops $\gamma_a$ for $a\in  H_1(\Sigma, \mathbb R)$
	
	\textbf{self-intersection of a loop $\gamma_a$:} The cobracket is extended by zero to the first homology. Similarly, a chord acting on a triple coproduct of $\odet$ is zero, since at least one leg of the chord will differentiate the constant function.
	
	\textbf{an intersection of two loops $\gamma_a, \gamma_b$:}. The Goldman bracket is extended to $H_1(\Sigma, \mathbb R)$ by zero. On the other hand, the chord acting on two functions $\odet$ will give a function proportional to $(-1)^{\hdeg{e_i}}t^{ij} \otr(e_i) \otr(e_j)$. Since either $e_i$ or $e_j$ are even, their odd trace is $0$ and the function corresponding to the chord also vanishes.
	
	\textbf{an intersection of $\gamma_a$ with $\gamma$:} This is the only nonzero term, let us analyze the two terms similarly as in the proof of Theorem \ref{thm:holonomyisBVmap}.
	The chord and the corresponding function is shown on Figure \ref{fig:ChordExt}. The extended Goldman bracket $[\gamma, a]$ contributes just $\gamma$, for a positive intersection. Using \eqref{eq:odetLinv} in Figure \ref{fig:ChordGoldmanExtFun}, we get the following element of $\mathcal{O}(Q(n))^{\otimes 4}$
	\begin{equation} \label{eq:ExtProof1} (-1)^{\hdeg{e_a}+\hdeg{e_b}+1} t^{ab} t_{ij}  (e_a)^\mathrm{L} \phi^i \otimes \phi^j \otimes \otr(e_b) \otimes 1, \end{equation}
	where $1\in \mathcal O(Q(n))$ is the constant function equal to $1$. If we choose a basis\footnote{Recall that $E_{(\alpha\beta)}$ are the elementary matrices.} of $q_\textnormal{as}(n)$  as $E_{(\alpha\beta)}$ and $\xi E_{(\alpha\beta)}$, the element $t$ can be written as
	\[  t = \sum_{\alpha, \beta =1 }^{n} E_{(\alpha\beta)} \otimes \xi E_{(\beta\alpha)} -  \xi E_{(\alpha\beta)} \otimes  E_{(\beta\alpha)}.\]
	Then, in Equation \eqref{eq:ExtProof1}, only the first term of the above sum contributes, and only when $\alpha = \beta$, i.e. we get the left-invariant action of the identity matrix  $\sum_{\alpha}E_{(\alpha\alpha)}$, which acts trivially
	\begin{equation*}  t_{ij}  (\textstyle\sum_{\alpha}E_{(\alpha\alpha)})^\mathrm{L} \phi^i \otimes \phi^j \otimes 1 \otimes 1 =  t_{ij}  \phi^i \otimes \phi^j \otimes 1 \otimes 1. \end{equation*}
\begin{figure}[h]
	\centering
	\begin{subfigure}[b]{0.35\textwidth}
		\begin{center}
			\includesvg{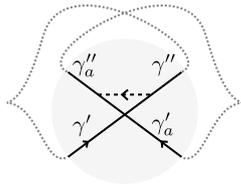}
		\end{center}
		\caption{A chord at an intersection of two loops $\gamma$ and $\gamma_a$}
		\label{fig:ChordGoldmanExt}
	\end{subfigure}
	\qquad  
	\begin{subfigure}[b]{0.35\textwidth}
		\begin{center}
			\includesvg{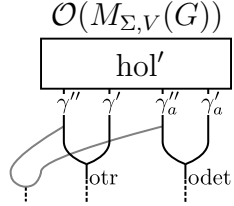}
		\end{center}
		\caption{The corresponding function on the moduli space}
		\label{fig:ChordGoldmanExtFun}
	\end{subfigure}
	\caption{The term corresponding to an intersection of two loops $\gamma$ and $\gamma_a$.}\label{fig:ChordExt}
\end{figure}

\end{proof}

\subsection[Surjectivity of the map Phi]{Surjectivity of the map $\ogmex$}
In this section, we investigate which (algebraic) functions on the moduli space without marked points are in the image of the maps $\ogm$ and $\ogmex$. 

Let us briefly recall the even case. The even analogue of $\ogm$ is the extension of $\egm$ (defined in Theorem \ref{thm:Goldman}) to an algebra map $\Sym \Goldman{\Sigma} \to \mathcal O(\ModSpace{\Sigma}{GL(n)})$. The image of this extension is generated by traces of arbitrary holonomies. Choosing a set of generators of $\pi_1(\Sigma)$ with holonomies denoted by $A_1, \dots, A_N$, this image is generated by traces of monomials in $A^{\pm1}_1, \dots, A^{\pm 1}_N$.

By \cite{Procesi1976}, any function on the space $\operatorname{Mat}_n(\mathbb R)^{\times N}$ polynomial in the matrix entries and invariant under simultaneous conjugation by $GL(n)$ is a product of traces of monomials in the matrices. 

If we restrict to the space $GL(n)^{\times N}/GL(n)$, the algebraic functions we consider are polynomials in the matrix entries and the inverses of the determinants of $A_i$, invariant under simultaneous conjugation. This algebra is equal to the image of the extension of $\egm$: this follows from the fact that the additional generators $\det A_i^{-1}$ can be written as a polynomial in traces of powers of $A_i^{-1}$.
\smallskip

Let us now turn to the odd case. The image of $\ogm$ is generated by odd traces of arbitrary holonomies, while the image of $\ogmex$ has additional generators for odd determinants of holonomies. 

A natural class of algebraic functions on the moduli space $\ModSpace{\Sigma}{Q(n)}\cong Q(n)^{\times N}/Q(n)$ is given by invariant polynomials of matrix entries and inverses of determinants of their even parts.
It was proven by Berele \cite{Berele2013} that all functions on $q_\mathrm{as}(n)^{\times N}$ polynomial in entries and invariant under simultaneous conjugation by $Q(n)$ are products of odd traces of monomials of matrices\footnote{Invariants under conjugation by $Q(n)$ were also studied by Sergeev \cite{Sergeev1985, Sergeev2001} and Shander \cite{Shander1998}}. However, restricting to invertible matrices, already for $N=n=1$ we see that not all algebraic functions are coming from odd traces. Indeed, denoting by $a$ and $da$ even and the odd coordinate $$A = a + \xi da\in Q(1),$$ we get $\otr A^k = k a^{k-1} da$, and we need to include the odd determinant $\odet A = a^{-1} da$, which is also invariant. Thus, we are led to the following conjecture.



\begin{conjecture}\label{conj:inv}
	The algebra of all algebraic functions on $$\ModSpace{\Sigma}{Q(n)} \cong (Q(n)^{\times N})/Q(n)$$ is generated by odd traces of products of matrices and their inverses, and by the odd determinants of the $N$ matrices. 
\end{conjecture}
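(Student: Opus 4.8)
The plan is to follow the template of the even case: realize the coordinate ring of the character variety as a localization of a ring of polynomial invariants, apply the first fundamental theorem for $Q(n)$ on the polynomial part, and then show that passing to the localization introduces exactly the inverse holonomies and the odd determinants as the new generators, i.e.\ that the claimed generators are precisely the image of an inverse-extended version of $\ogmex$. Fix generators of $\pi_1(\Sigma)$ with holonomies $A_1,\dots,A_N$ and write $A_i = X_i + \xi Y_i$. First I would record that the even determinants $\det X_i$ of the bodies are $Q(n)$-invariant: conjugation by $g = G + \xi H \in Q(n)$ preserves the ordinary trace, and hence the determinant, of the even part of each $A_i$, even after accounting for the odd directions $H$ of the group. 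Consequently the algebra of algebraic functions on $\ModSpace{\Sigma}{Q(n)} \cong (Q(n)^{\times N})/Q(n)$ is the $Q(n)$-invariant part of the localization of $\mathcal O(q_{\mathrm{as}}(n)^{\times N})$ at the multiplicative set generated by the $\det X_i$.

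Next I would clear denominators. Given an invariant algebraic function $f$, a sufficiently high power of $\prod_i \det X_i$ turns it into a polynomial function $g = \bigl(\prod_i \det X_i\bigr)^{m} f$; since each $\det X_i$ is invariant, $g$ is a polynomial $Q(n)$-invariant. By Berele's theorem \cite{Berele2013}, $g$ is a polynomial in odd traces $\otr(A_{i_1}\cdots A_{i_k})$ of monomials in the non-inverted generators. The problem then reduces to showing that $f = g/\bigl(\prod_i \det X_i\bigr)^{m}$ lies in the subalgebra generated by odd traces of words in the $A_i^{\pm 1}$ together with the odd determinants $\odet A_i$.

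The core of the argument is the odd analogue of the even fact (used for $\egm$) that $\det A_i^{-1}$ is a polynomial in traces of powers of $A_i^{-1}$. The guiding computation is the rank-one case: the odd traces $\otr(A^k) = k\,a^{k-1}\,da$, now ranging over all $k \in \mathbb Z$ once inverses are allowed, span every monomial $a^{m}\,da$ with $m \ne -1$, and precisely the missing term $a^{-1}\,da$ is furnished by $\odet A = a^{-1}\,da$. In general I would prove a division lemma: each factor $\det X_i^{-1}$ multiplying an odd trace of a word can be reabsorbed, using the additivity $\odet(GH) = \odet(G) + \odet(H)$ of \eqref{eq:odetadd} and the infinitesimal relation $x(\odet) = -\otr x$ of \eqref{eq:odetder}, into odd traces of words in the $A_i^{\pm 1}$ and the $\odet A_i$. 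Additivity also shows that odd determinants of arbitrary products collapse to $\mathbb R$-linear combinations of the $N$ determinants $\odet A_i$, so that no further odd-determinant generators appear.

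The hard part will be to upgrade Berele's first fundamental theorem, stated for polynomial invariants of non-inverted queer matrices, to the localized setting containing the inverses $A_i^{-1}$, and to obtain enough control over the relations among odd traces (a second-fundamental-theorem flavour) to guarantee that the division lemma can always be carried out without leaving the claimed subalgebra. Equivalently, one must show that the denominators $\det X_i$ can be completely absorbed by the generators; this is exactly the step where $\odet$ is indispensable, and where the queer identity \eqref{eq:discardyforqn} together with the unimodularity of $\q(n)$ should enter, governing the interaction of $\otr$ with inversion. Establishing this absorption uniformly in $n$ and $N$, rather than merely in rank one, is the step I expect to demand the most work.
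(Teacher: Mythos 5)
First, a framing point: the statement you set out to prove is stated in the paper as Conjecture~\ref{conj:inv} --- the paper itself does \emph{not} prove it for general $n$. It only proves the case $n=1$, and by a quite different route: there, $\q(1)$-invariance of an algebraic function on $Q(1)^{\times N}$ is identified with closedness of the corresponding Laurent differential form (the odd part of $\q(1)$ acts by the vector field $2\sum_i da_i\,\partial/\partial a_i$), and the claim follows because the relevant de Rham cohomology is generated by $da_i/a_i = \odet A_i$, with exact forms rewritten order by order in form degree through odd traces $\otr(A^k)$. So any argument that settled the general case would go beyond the paper; yours does not, and moreover it contains a concrete error.

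The error is at the foundation of your plan: $\det X_i$ is \emph{not} $Q(n)$-invariant. It is invariant under the even subgroup $GL(n)$, but the odd directions of $Q(n)$ move it. Infinitesimally, for $\eta = \xi H$ in the odd part of $\q(n)$ acting on $A = X + \xi Y$ by super-conjugation, one has $\delta X = HY + YH$ and $\delta Y = HX - XH$, hence
\[ \delta(\det X) = \det X\,\tr\bigl(X^{-1}(HY+YH)\bigr) \neq 0 \]
in general; in rank one this is exactly the paper's computation $[\xi, a + \xi\,da] = 2\,da$, i.e.\ $\delta a = 2\,da \neq 0$. Consequently your clearing-of-denominators step collapses: if $f$ is invariant, then $g = \bigl(\prod_i \det X_i\bigr)^m f$ is in general \emph{not} a polynomial invariant, so Berele's theorem cannot be applied to it, and the reduction to your ``division lemma'' never gets started. (This non-invariance is also precisely why $\odet$ is indispensable: the invariant object built from $\det X_i$ is the closed combination $\odet A_i$, not $\det X_i$ itself.) Finally, even setting this aside, the step you yourself flag as the hard part --- absorbing all denominators into odd traces and odd determinants uniformly in $n$ and $N$ --- is exactly the open content of the conjecture, so what you have is an outline with its central lemma unproven, not a proof. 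A sound partial result along the paper's lines would instead generalize the $n=1$ mechanism: find a replacement, for $n>1$, of the identification of $\q(n)$-invariants with closed forms; that, rather than a localization of Berele's theorem, is where the difficulty sits.
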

\begin{proposition}
	For $n=1$, the conjecture is true.
\end{proposition}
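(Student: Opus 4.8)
The plan is to make the $n=1$ moduli space completely explicit and to identify its function algebra with closed algebraic differential forms on a torus. Since $\Sigma$ has nonempty boundary, $\pi_1(\Sigma)$ is free of some rank $N$, so $\ModSpace{\Sigma}{Q(1)}\cong Q(1)^{\times N}/Q(1)$, and its functions are the conjugation invariants inside $\mathcal O(Q(1)^{\times N})=\mathbb R[a_1^{\pm},\dots,a_N^{\pm}]\otimes\wedge[da_1,\dots,da_N]$, where $A_j=a_j+\xi\,da_j$ with $a_j$ the even and $da_j$ the odd coordinate. First I would compute the conjugation action directly: writing a group element as $g=x+\xi\eta$, a short computation gives that conjugation fixes every $da_j$ and sends $a_j\mapsto a_j+2x^{-1}\eta\,da_j$. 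Hence a function is invariant precisely when it is killed by the odd derivation $\sum_j da_j\,\partial_{a_j}$. Under the tautological identification of the odd coordinate $da_j$ with the de Rham differential of $a_j$, this derivation is exactly the de Rham differential $\dd$ on the algebraic torus $T=(\mathbb R^{\times})^{N}$, so that
\[
\mathcal O(\ModSpace{\Sigma}{Q(1)})=\ker \dd = Z^{\bullet}(T),
\]
the algebra of closed algebraic forms.

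Next I would locate the proposed generators inside $Z^{\bullet}$. Using the formulas recalled in this subsection (valid because the sum defining $\odet$ truncates at $j=1$ when $n=1$), each $\odet(A_j)=a_j^{-1}\,da_j=\dd a_j/a_j=:\omega_j$ is a generator of $H^1(T)$, while $\otr(\mathrm{hol}_w)=\dd(a^{\mathbf m})+(\text{terms of form-degree}\ge 3)$ for any word $w$ of abelianization $\mathbf m$: expanding $A_j^{e}=a_j^{e}(1+e\,\xi\,\omega_j)$ and collecting the coefficient of $\xi$, the lowest-degree part is $\dd(a^{\mathbf m})$, and $\otr(\mathrm{hol}_w)$ is odd so only odd form-degrees occur. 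In particular all these functions are closed, so the subalgebra $\mathcal S$ they generate satisfies $\mathcal S\subseteq Z^{\bullet}$; the content of the proposition is the reverse inclusion. I will use two standard facts about the torus: $H^{\bullet}(T)=\wedge(\omega_1,\dots,\omega_N)$ with the Hodge-type splitting $Z^{p}=B^{p}\oplus\langle\omega_S:|S|=p\rangle$ (exact plus harmonic), and --- crucially --- that $\dd$ raises form-degree by exactly one, so each homogeneous component of a closed form is again closed.

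The core of the argument is a downward induction on the form-degree $p$ proving $Z^{p}\subseteq\mathcal S$. For the base case $p=N$ every top form equals $a^{\mathbf m}\,\omega_{[N]}$ for some $\mathbf m$, which is a product of factors $\tfrac1{m_j}\otr(A_j^{m_j})$ (when $m_j\neq0$) and $\odet(A_j)$ (when $m_j=0$), hence lies in $\mathcal S$. For the inductive step I decompose a closed $p$-form as a harmonic part (a combination of the $\omega_S=\prod_{j\in S}\odet(A_j)$, already in $\mathcal S$) plus an exact part $\dd\beta$; writing $\beta$ in the $\omega$-basis it suffices to treat a single monomial $\beta=a^{\mathbf m}\omega_J$ with $|J|=p-1$. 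I would then realise $\dd(a^{\mathbf m}\omega_J)$ as the lowest-degree component of the product $\otr(\mathrm{hol}_{w})\cdot\prod_{j\in J}\odet(A_j)\in\mathcal S$, where $w$ has abelianization $\mathbf m$. The remaining components of this product have form-degree $>p$ and, being the difference of two closed forms, are themselves closed; by the induction hypothesis they already lie in $\mathcal S$, so subtracting them gives $\dd(a^{\mathbf m}\omega_J)\in\mathcal S$. This yields $Z^{p}\subseteq\mathcal S$, and letting $p$ run down to $0$ gives $\mathcal S=Z^{\bullet}=\mathcal O(\ModSpace{\Sigma}{Q(1)})$, which is the claim.

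The main obstacle is exactly this control of the higher form-degree ``tails'' of the odd traces: a single $\otr(\mathrm{hol}_w)$ is inhomogeneous, so one cannot simply read off $\dd(a^{\mathbf m})$ from it, and the naive attempt to cancel the tails by products of odd traces drags in even monomial factors that are not themselves invariant. What rescues the argument is that the de Rham differential increases degree by exactly one, so a closed inhomogeneous combination splits into closed homogeneous pieces that feed a degree induction; this is precisely what lets products of odd traces and odd determinants strip off the unwanted tails one degree at a time. By contrast, the polynomial statement of Berele \cite{Berele2013} does not see these subtleties, which appear only once inverses --- and hence the odd determinants --- are allowed.
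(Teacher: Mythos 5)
Your proof is correct and takes essentially the same route as the paper: both identify the $Q(1)^{\times N}$-invariants with closed algebraic differential forms on $(\mathbb R^{\times})^N$, split closed forms into a harmonic part (products of the $\odet(A_j)=da_j/a_j$) plus an exact part using the de Rham cohomology of the torus, and then remove the higher form-degree tails of the odd traces $\otr(\hol_w)=d(a^{\mathbf m})+(\text{degree}\ge 3)$ by an induction on form degree. The only cosmetic differences are the direction of the induction (the paper kills the lowest nonvanishing component and works upward, multiplying by $\otr(A_{I_j})=da_{I_j}$, whereas you prove $Z^p\subseteq\mathcal S$ by downward induction from the top degree, multiplying by $\odet(A_j)$), which is the same cancellation mechanism in different bookkeeping.
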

\begin{proof}
	On $Q(1)^{\times N}$, we denote the even coordinates $a_{i}$ and the odd
	coordinates $da_{i}$. 	It is enough to look at invariants under $q(1)$, since odd traces and determinants
	are already $Q(1)$ invariant. The even part of $q(1)$ acts trivially, the odd part acts via
	\[ [ \xi, a + \xi da ]  = 2 da\,.\]
	In other words, on $Q(1)^{\times N}$, $\xi$ acts as the odd vector field
	$$ 2 \sum_{i} da_{i} \frac{\partial}{\partial a_{i}}. $$
	Invariant functions on $Q(1)^{\times N}$ are thus the same thing as
	closed differential forms on the complement of coordinate hyperplanes of
	$\mathbb R^{N}$, polynomial in the coordinates and their inverses.
	 We will freely go between function on $Q(1)^{\times N}$ and such differential forms.
	
	The odd trace of the $i$th matrix $A_{i} = a_i + \xi da_i$ 
	is equal to $da_{i}$, its odd determinant equals $da_{i}/a_{i}$. More generally, let $f$ be a non-commutative
	polynomial in $N$ variables, then 
	\[ f(A_{1}, \dots, A_{N}) = \alpha + \xi d\alpha \]
	for some function $\alpha = f(a_{1}, \dots, a_{N}) + \text{forms of degree $\ge 2$}$.
	This is true because it holds for the matrices $A_{i}$ and remains true for product of
	such matrices of that form
	\[  ( \alpha_1 + \xi d\alpha_1)( \alpha_2 + \xi d\alpha_2)
	=( \alpha_{1}\alpha_{2} +
	 d\alpha_{1} d\alpha_{2}) + \xi ( \alpha_{1}d \alpha_{2} + d\alpha_{1} \alpha_{2} ).\]
	
	The cohomology of the algebra of non-constant algebraic functions on $Q(1)$ with respect to
	$d$ is one-dimensional, generated by $da/a$. Thus, any $q(1)$ invariant function on 
	$Q(1)^{\times N}$ can be written as 
	a product of functions $da_{i}/ a_{i}$ plus an exact term. 
	The term $da_{i}/ a_{i}$ is equal to the product of odd determinants,
	we thus need to treat exact forms.
	
	Let $\beta = \beta^{(1)} + \beta^{(2)} + \dots $ be an exact form with $\beta^{(i)}$ an $i$-form. We can always 
	 write $\beta^{(k)} = d (\sum_{|I| = k-1}\beta_{I} da_{I}) = \sum_{|I| = k-1} d (\beta_{I}) da_{I} $ for
	some functions $\beta_{I}$, $I = (I_{1}, \dots, I_{k-1})$ is a multi-index of length $k-1$. 
	Then,  if $\beta^{(i)}$ vanish for $i < k$, the $k$-th form component of 
	\[  \beta - \mathrm{otr}(\beta_{I}(A)) \otr(A^{I_{1}}) \dots \otr(A^{I_{k-1}})  \]
	vanishes. Here $\mathrm{otr}(\beta_{I}(A))$ is obtained by replacing
	$a_{i}$ by $A_{i}$, whose 1-form part does not depend on the chosen order.
	
	Working order by order, we finally write any invariant function as
	a product of odd determinants plus sum of products of odd traces.
\end{proof}
\begin{remark}
In the even case, the kernels of the maps $\egm$ are non-empty for every $n$; however their intersection over $n\in \mathbb N$ is empty, as shown by Etingof \cite{Etingof2006}. It would be interesting to study an analogous question for $Q(n)$.
\end{remark}


%
\newpage


\appendix

\section{Skeletons and foliations}
\subsection{Skeletons}
\label{app:skeleton}
We now prove Proposition \ref{prop:2Dtopology}, saying that all skeletons can be related by a finite sequence of slides. This will follow from the work of Penner, namely from the path-connectedness of the complex of all fatgraphs of a bordered surface see \cite[Ch.~4]{PennerBook}. The additional work in the proof is needed to relate skeletons to uni-trivalend fatgraphs from \cite{PennerBook}.

\begin{proof}[Proof of Proposition \ref{prop:2Dtopology}]
	We start by replacing the skeleton by a 
	 homotopy equivalent graph embedded in $\Sigma$. Specifically, we arbitrarily resolve each marked point $p$, with valence $n(p)$, 
	to a binary tree with a root at $p$ and $n(p)$ leaves. This way, the skeleton $\Gamma$ is transformed into a uni-trivalent
	graph $\tilde{\Gamma}$ in $\Sigma$, dual to a triangulation. 
	In general, let us consider a uni-trivalent graph $\tilde{\Gamma}$ in the surface such that 
	\begin{enumerate}
		\item the univalent vertices are mapped bijectively to $V$,
		\item the trivalent vertices are mapped to the interior of $\Sigma$, and
		\item $\Sigma$ deformation retracts to $\tilde{\Gamma}$.
	\end{enumerate}
	To go back from such uni-trivalent graph to a (possibly different) skeleton, we need to choose a subset $T$ of
	edges of $\tilde{\Gamma}$, let's call them \emph{red}, such that their complement $\tilde{\Gamma} \setminus T$
	is a spanning forest\footnote{Recall that a spanning forest of a graph is a subgraph consisting of a disjoint
	union of trees that contains all vertices of the graph.}, with one tree rooted at each marked point. Then, contracting the 
	trees to their roots, the red edges become a skeleton of the surface.
	
	We will now proceed in two steps. First, we will prove that for a fixed uni-trivalent graph, any two choices of red
	edges give skeletons that can be related by slides. Then, we will use a result of Penner \cite{PennerBook} relating 
	different uni-trivalent graphs via flips.
	\medskip
	
	With two choices $T, U$ of the sets of red edges on the same uni-trivalent graph $\tilde{\Gamma}$, let us denote
	the two complementary forests by $(T_{p_{1}}, \dots, T_{p_{n}})$ and $(U_{p_{1}}, \dots, U_{p_{n}})$,
	where we enumerated their trees by their roots.
	One of the red edges $e_0$ in $T$ must be in $U_{p}$ for some $p\in V$ (otherwise $T = U$).
	Let us consider the union of $e_0$ with the forest $\tilde{\Gamma} \setminus T$. This subgraph will not
	be a forest anymore, but this additional edge\footnote{Spanning tree is equivalently characterized by
	the property that addition of any edge creates a single loop \cite[Sec.~4.4,~Ex.~2.]{MatNes}. For spanning forests,
	the result we use follows by joining all the roots of the trees to a new vertex, creating
	a spanning tree of this enlarged graph. Then, adding an edge 
	creates a loop in this spanning tree, which might or might not pass through the new vertex,
	giving the two possibilities.}
	\begin{enumerate}
		\item either created a loop $\gamma$ in a tree $T_{p_{i}}$, or
		\item it connected $T_{p_{i}}$ with some other tree $T_{p_{j}}$,
		creating a path between $\gamma$ between $p_{i}$ and $p_{j}$.
	\end{enumerate}
	In both cases, the path $\gamma$ has to contain an edge $e_1$ that belongs to $U$. This implies 
	that $e_1 \neq e_0$, as $e_0 \in U_{p} \subset \tilde{\Gamma} \setminus U$. Thus, considering $T' := T \setminus \{e_0\} \cup \{ e_1\}$,
	we obtain a new red set, i.e. a set of edges such that $\tilde{\Gamma} \setminus T'$ is a forest rooted at $V$.
	
	Repeating this move, we transform $T$ to $U$, since each such move
	subtracts 2 from the finite number  $\sum_{i} |T_{p	_{i}} \mathrel{\triangle}  U_{p_{i}}|$.
	\medskip
	
	For the skeletons corresponding to the forests, this move corresponds
	to the slide, along $e_0$, of the red half-edges between the removed red
	 edge $e_0$ and the added red edge $e_1$, as encountered along $\gamma$. On Figure \ref{fig:MoveToSlide}, 
 	we show the case when $\gamma$ is a loop. 	The case when the added edge connects two trees is analogous.

\begin{figure}[h]
	\centering
	\includesvg{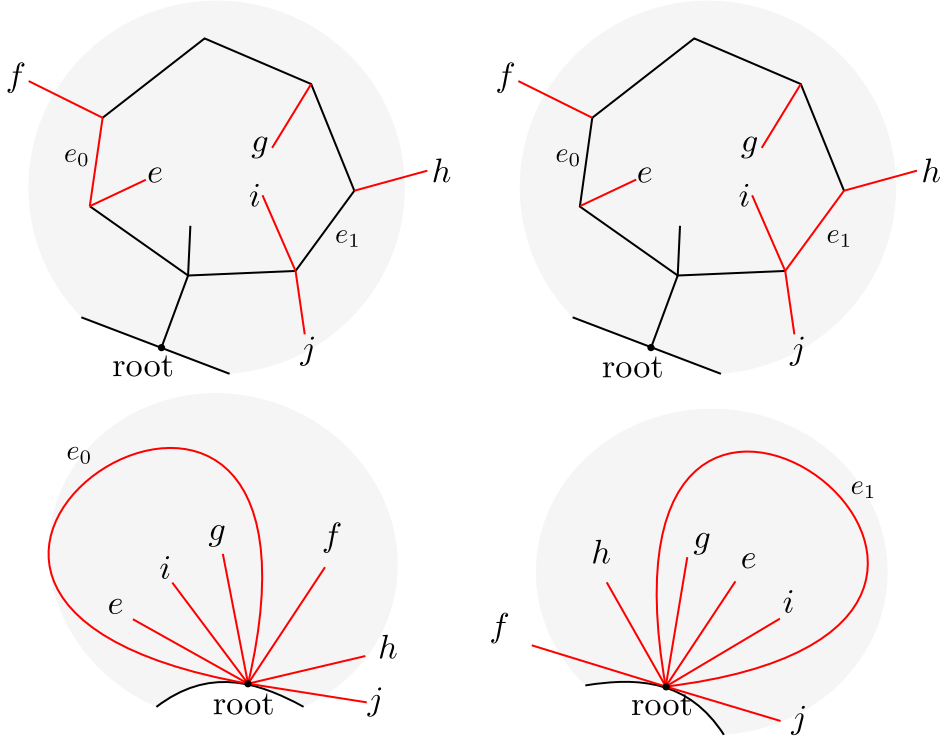}
	\caption{Replacing the red edge $e_0$ with $e_1$. On the top, the red edges and the black forest are shown. On the bottom, the forest is contracted to show the corresponding skeletons. The half-edges $g, f$ and $h$ undergo a slide along $e_0$, because they are between $e_0$ and $e_1$.}
	\label{fig:MoveToSlide}
\end{figure}	

	Now, let us turn to relating different uni-trivalent graphs. We will use the fact that any two uni-trivalent graphs as above are related by a 
	sequence of flips, i.e. moves as on Figure \ref{fig:flip}.
	
\begin{figure}[h]
	\centering
	\includesvg{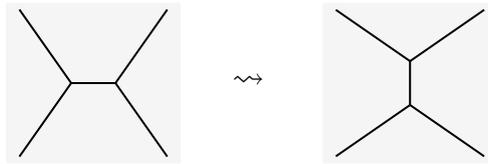}
	\caption{Flip move. The edge in the middle has to connect two distinct trivalent vertices.}
	\label{fig:flip}
\end{figure}

	For surfaces with marked points on the boundary, this claim follows from the work of
	Penner \cite{PennerBook}.   Namely, we use Theorem 5.21 of Chapter 4 in \cite{PennerBook} with $Q$ to be all the marked points on the boundary and $P$ to be all the boundary components without any marked points. As there are no punctured vertices (see Theorem 5.6 in loc.cit.) and thus no quasi-flips, it follows that all uni-trivalent graphs on $\Sigma$ are connected by a sequence of flips.

	\medskip
	
	Finally, let us explain how the flips interact with the skeletons.
	For a flip along an edge, we can find a spanning forest containing such
	edge, by adding this edge to the forest and removing another edge
	from a newly formed loop or path connecting marked points. If this new
	loop or path contains only the one edge, this edge is not valid for flips.
	
	However, flip along an edge contained in the spanning forest 
	has no effect on the corresponding skeleton.  Thus, we can relate
	any two skeletons, replaced by uni-trivalent graphs with red edges, by 
	moving red edges and flips along non-red edges. Since
	moving of red edges corresponds to slides and flips along non-red edges don't
	change the skeleton, the proposition is proven. 
\end{proof}

\subsection{Foliations}
\label{app:foliation}
An example of a foliated surface, with multiple paths and their rotation numbers, is shown below in Figure \ref{fig:foliationexample}
\begin{figure}[h]
	\centering
	\captionsetup{width=0.7\linewidth}
	\includesvg{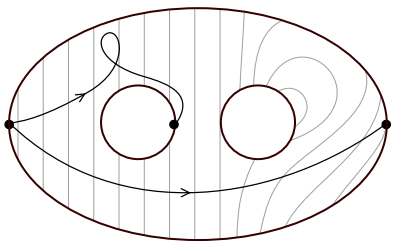}

	\caption{A foliation of a sphere with three punctures. The top path has rotation number $-\frac 12$, the bottom path $0$.}
		\label{fig:foliationexample}
\end{figure}
Now, we classify foliations on surfaces with boundary and marked points.
\begin{proposition}
	If $\Gamma$ is a skeleton of $\Sigma$, then a homotopy class of foliations is uniquely specified by choosing the rotation number for each edge of $\Gamma$. 
	Conversely, for any choice of half-integers for edges of $\Gamma$, there exists a foliation, with these rotation numbers.
\end{proposition}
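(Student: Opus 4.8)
The plan is to identify a foliation with its tangent line field and then to classify line fields homotopy-theoretically, exploiting that $\Sigma$ deformation retracts to $\Gamma$. Since a $1$-dimensional distribution on a surface is automatically integrable, a foliation is the same datum as a nowhere-vanishing line field, i.e. a section of the projectivized tangent bundle $\mathbb{P}(T\Sigma)$, and a homotopy of foliations is the same as a homotopy of line fields. As $\Sigma$ is an oriented surface with non-empty boundary it is homotopy equivalent to the graph $\Gamma$, so $T\Sigma$ is trivial; fixing a trivialization identifies line fields with maps $L\colon \Sigma \to \mathbb{RP}^1$. The tangency condition at a marked point $p\in V$ becomes the requirement $L(p)=\ell_p$, where $\ell_p\in\mathbb{RP}^1$ is the fixed direction of $\partial\Sigma$ at $p$ in the trivialization, and a homotopy preserving tangency is exactly a homotopy of maps fixing these values. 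Thus I would reduce the statement to computing the set of homotopy classes, relative to $V$, of maps $(\Sigma,V)\to(\mathbb{RP}^1,\{\ell_p\})$.

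Next I would push this computation onto $\Gamma$. The deformation retraction $\Sigma\to\Gamma$ is the identity on $\Gamma$, hence on $V$, so restriction along $\Gamma\hookrightarrow\Sigma$ is a bijection on homotopy classes relative to $V$: its inverse is pullback along the retraction, and the retracting homotopy, being constant on $V$, carries every map to one pulled back from $\Gamma$ through a homotopy fixing the values at $V$. On the graph $\Gamma$ a map relative to $V$ is determined edge by edge, the edges meeting only at the fixed vertices; on a single edge $e$ from $p_i$ to $p_j$ the homotopy classes relative to the endpoints of paths in $\mathbb{RP}^1$ from $\ell_{p_i}$ to $\ell_{p_j}$ form a torsor over $\pi_1(\mathbb{RP}^1)=\mathbb{Z}$. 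Therefore the homotopy classes relative to $V$ are in bijection with $\mathbb{Z}^{\Edges\Gamma}$, the integer recording, for each edge, the winding of $L$ in $\mathbb{RP}^1$ relative to a chosen reference.

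It then remains to match this winding with the rotation number. Along an oriented edge $\gamma=e$ I would compare the continuous lifts of the foliation direction $\theta_L$ and of the tangent direction $\theta_\gamma$, both valued in $\mathbb{R}/\pi\mathbb{Z}$, and write $\mathrm{rot}_\gamma=\tfrac{1}{2\pi}\big(\tilde\psi(1)-\tilde\psi(0)\big)$ with $\psi=\theta_L-\theta_\gamma$; since $\gamma$ is transverse to $\partial\Sigma$ and $L$ is tangent to $\partial\Sigma$ at the endpoints, $\tilde\psi(1)-\tilde\psi(0)\in\pi\mathbb{Z}$, which recovers $\mathrm{rot}_\gamma\in\tfrac12\mathbb{Z}$. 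The key point is that the endpoint values $\ell_{p_i},\ell_{p_j}$ together with the tangent winding of the fixed edge contribute a constant $c_e$, while increasing the $\mathbb{RP}^1$-winding of $L|_e$ by one increases $\tilde\psi$ by $\pi$, hence $\mathrm{rot}_\gamma$ by exactly $\tfrac12$. Thus the assignment $\text{winding}\mapsto\mathrm{rot}_\gamma$ is a bijection $\mathbb{Z}\xrightarrow{\sim}\tfrac12\mathbb{Z}$ for each edge, and the product over edges gives a bijection between homotopy classes of foliations and $(\tfrac12\mathbb{Z})^{\Edges\Gamma}$. This simultaneously yields uniqueness (the rotation numbers determine the class) and existence: given an arbitrary assignment, build the corresponding map $\Gamma\to\mathbb{RP}^1$ with $M(p)=\ell_p$ and pull it back along the retraction to obtain a foliation on $\Sigma$ realizing the prescribed rotation numbers.

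The main obstacle I expect is the bookkeeping in this last step: correctly accounting for the factor $\tfrac12$ coming from $\mathbb{RP}^1$ rather than $S^1$ (a full loop of line directions is a half-turn), and verifying that the edge-dependent constant $c_e$ is an integer, so that all of $\tfrac12\mathbb{Z}$ is hit rather than a single coset. The homotopy-theoretic reduction is then routine, provided the tangency condition is read off as fixing the value $\ell_p$ at each marked point and all homotopies are kept relative to $V$.
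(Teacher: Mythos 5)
Your argument is sound, and its uniqueness half is essentially the paper's: the paper fixes a reference foliation and a metric, turning an arbitrary foliation into a based map $\Sigma/V \to S^1$, and classifies such maps by their degrees along the edges of $\Gamma$ (using $\Sigma/V \simeq \Gamma/V$); your trivialization of $T\Sigma$ and computation of homotopy classes rel $V$ of maps to $\mathbb{RP}^1$ is the same classification, with the reference object a framing rather than a foliation. Where you genuinely diverge is existence. The paper realizes any prescribed half-integers by an explicit geometric gluing: a foliated coupon at each vertex, joined by spiral strips (a strip embedded in a horizontally foliated plane realizes any $r \in \tfrac12\mathbb{Z}$ on an edge). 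You instead deduce existence from the classification itself, observing that a unit change of $\mathbb{RP}^1$-winding along an edge changes $\mathrm{rot}$ by exactly $\tfrac12$, so the image of the rot-map is a coset of $(\tfrac12\mathbb{Z})^{\Edges\Gamma}$, which is everything because rotation numbers are themselves half-integers. Your route gives a uniform treatment of both halves of the statement; the paper's gluing buys an honest smooth foliation with no further regularity discussion.

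Two points in your write-up need care, both of which you partly flagged. First, transversality of $\gamma$ to $\partial\Sigma$ alone does \emph{not} give $\tilde\psi(1)-\tilde\psi(0)\in\pi\mathbb{Z}$: it only forces $\psi\neq 0 \bmod \pi$ at the endpoints, not equality of the two endpoint angles. You must normalize them, e.g. by the paper's convention in Definition \ref{def:foliation} of choosing a metric for which $\gamma$ is perpendicular to the foliation at its endpoints; this is exactly what closes the angle loop in $\mathbb{RP}^1$ and makes $\mathrm{rot}_\gamma$ a well-defined half-integer, after which your slope-$\tfrac12$ computation goes through. Second, pulling a map $\Gamma\to\mathbb{RP}^1$ back along the retraction produces only a continuous line field; to obtain a foliation in the paper's sense (and to invoke integrability) you should smooth it within its homotopy class rel the tangency condition at $V$. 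This is standard, but it is the step the paper's constructive proof avoids entirely.
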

\begin{proof}
	\textbf{existence:}     For each vertex of the skeleton $\Gamma$, with valence $n$, take the following coupon:
	\begin{center}
		\includesvg{images/coupon.svg}
	\end{center}
    i.e. a disk with $n$ marked (dotted) intervals on the boundary, foliated as above.
    
	For any $r \in \frac 12 \mathbb Z$, construct a strip with $r$ rotations of the foliation by embedding it in a horizontally-foliated plane as a spiral; the case of $r = -3/2$ is shown below:
	\begin{center}
	\includesvg{images/strip.svg}
	\end{center}
	Finally, glue together these strips and the coupons, which is possible as the foliations agree on the dotted intervals.
	
	\textbf{uniqueness:} If we fix one foliation and a metric on $\Sigma$, foliations are in bijection with based maps $\Sigma/V \to S^1$. Homotopy classes of these maps are classified by specifying the degree of the map along each edge of $\Gamma$, as $\Sigma/V \sim \Gamma/V$.
\end{proof}
Let us remark that $1$-dimensional foliations are in 1-1 correspondence with $1$-dimensional distributions in $T\Sigma$,  tangent to $\partial \Sigma$ at $V$. This follows from the Frobenius theorem.  If there exists a nowhere-vanishing section of such distribution, then the corresponding foliation is called \textbf{orientable}. Not all foliations are orientable, i.e. it is not always possible to choose a non-vanishing vector field tangent to the foliation;  the foliation on Figure \ref{fig:foliationexample} is such non-orientable example.

Orientable foliations also arise from framings such that the first component of the framing is tangent to $\partial \Sigma$ at $V$. To each orientable foliation, there is unique-up-to homotopy such framing compatible with the orientation of $\Sigma$.

\section{Lie algebras with a pairing}
\subsection{Ordinary Lie algebras with a pairing}\label{app:evenLie}
Let $\g$ be an ordinary Lie algebra with a symmetric, invariant pairing $\langle-,-\rangle$. For a basis $e_i$ of $\g$, let $s^{ij}$ to be the inverse of the matrix $\langle e_i, e_j \rangle$, and define $f^{ijk} = f^i_{ab}s^{aj}s^{bk}$, where $f^i_{ab}$ are the structure constants of $\g$ defined by $[e_a, e_b] = f^i_{ab} e_i$.  Let $H$ be an ordered set (e.g. the set of half-edges of a vertex of a skeleton, as in Equation \eqref{eq:FRbivector}).

Recall from Section \ref{sec:qBVFRproof} the elements
$$\tilde{s}_{ab} = s^{ij} \iota_{a}(e_{i})\wedge \iota_{b}(e_{j}) \in \wedge^2 \g^{H}$$
and 
$$\tilde{\phi}_{abc} = \frac 1 {24} f^{ijk} \iota_a(e_i) \wedge \iota_b(e_j) \wedge \iota_c(e_k ) \in \wedge^3 \g^{H},$$
where $a, b, c\in H$ and $\iota_a\colon \g \to \g^H$ is the inclusion into the $a$-th factor.
 Let us now collect some useful properties of these two elements of $\bigwedge \g^{H}$.
\begin{proposition}[Properties of $s$]\label{prop:sproperties}
	We have $\tilde{s}_{ab} = - \tilde{s}_{ba}$ and $\tilde{\phi}_{abc}$ is symmetric w.r.t. permutation of its labels. 
	For $a\neq b$
	\[ [\tilde{s}_{ab}, \tilde{s}_{ab}] = 24(\tilde{\phi}_{aab} + \tilde{\phi}_{abb}) \,,\]
	and for $a, b$ and $c$ different, 
	\[ [\tilde{s}_{ab}, \tilde{s}_{bc}] =  -  24 \tilde{\phi}_{abc}. \]
	Finally, for $a$, $b$, $c$ and $d$ all distinct, 
	$ [\tilde{s}_{ab}, \tilde{s}_{cd}] = 0$.
\end{proposition}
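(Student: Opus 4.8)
The plan is to work entirely inside the Gerstenhaber algebra $\bigwedge \g^{\he(p)}$, in which the Schouten bracket extends the Lie bracket of $\g^{\he(p)} = \bigoplus_{a} \g$ as a biderivation of degree $-1$. The single structural fact driving everything is that distinct copies of $\g$ commute: writing $x_a := \iota_a(x)$, one has $[x_a, y_b] = 0$ for $a \neq b$ and $[x_a, y_a] = [x,y]_a$. I will also use two symmetry facts coming from the data: $s^{ij} = s^{ji}$ (since $s \in \Sym^2\g$) and total antisymmetry of the Cartan tensor $f^{ijk} = f^i_{xy}s^{xj}s^{yk}$, which is exactly the content of ad-invariance of the pairing.

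The first two claims are purely formal. Antisymmetry $\tilde{s}_{ab} = -\tilde{s}_{ba}$ follows by swapping the two wedge factors (sign $-1$) and relabelling the summation indices using $s^{ij}=s^{ji}$. Symmetry of $\tilde{\phi}_{abc}$ under permuting its labels follows the same way: a transposition of labels is compensated by a transposition of summation indices, and the two signs coming from antisymmetry of the wedge and of $f^{ijk}$ cancel.

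For the three bracket identities I would expand the Schouten bracket of two bivectors by the biderivation rule into the standard sum of four terms,
\[ [u_1\wedge u_2, v_1\wedge v_2] = u_1\wedge[u_2,v_1]\wedge v_2 + u_1\wedge v_1\wedge[u_2,v_2] - [u_1,v_1]\wedge v_2\wedge u_2 - v_1\wedge[u_1,v_2]\wedge u_2, \]
with $u_1,u_2,v_1,v_2$ ranging over the four legs of the two copies of $\tilde{s}$. Each of the four brackets pairs one leg of the first bivector with one leg of the second, and the commuting-copies fact immediately kills every term in which the paired legs sit on different half-edges. This disposes of the four-distinct-label case at once: in $[\tilde{s}_{ab}, \tilde{s}_{cd}]$ every bracket pairs a leg from $\{a,b\}$ with a leg from $\{c,d\}$, so all four terms vanish. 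For $[\tilde{s}_{ab}, \tilde{s}_{bc}]$ only the $b$--$b$ pairing survives, leaving a single term; for $[\tilde{s}_{ab}, \tilde{s}_{ab}]$ both the $a$--$a$ and the $b$--$b$ pairings survive, leaving two terms.

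It then remains to identify the surviving terms with $\tilde{\phi}$'s. In each case the surviving Lie bracket produces a factor $f^m_{jk}$, and contracting the two copies of $s$ against it reassembles a raised-index structure constant; using $s^{ij}=s^{ji}$ and total antisymmetry of $f^{ijk}$ one rewrites the product $s^{\bullet\bullet}s^{\bullet\bullet}f^{\bullet}_{\bullet\bullet}$ as a single $f^{\bullet\bullet\bullet}$, and the factor $\tfrac{1}{24}$ in the definition of $\tilde{\phi}$ accounts for the resulting $24$. Concretely this should yield $[\tilde{s}_{ab},\tilde{s}_{bc}] = -24\,\tilde{\phi}_{abc}$ and $[\tilde{s}_{ab},\tilde{s}_{ab}] = 24(\tilde{\phi}_{aab}+\tilde{\phi}_{abb})$. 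The only genuinely delicate point, and the step I expect to absorb most of the effort, is the sign and index bookkeeping: tracking the Koszul signs produced both by the biderivation expansion and by reordering the three surviving wedge factors into the canonical order of $\tilde{\phi}$, and checking that these combine with the sign from $f^{ijk}$ to give exactly $-24$ and $+24$ rather than the opposite signs.
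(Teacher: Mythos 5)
Your proposal is correct and follows essentially the same route as the paper's proof: expand the Schouten bracket of the two bivectors, note that only pairings of legs on the same half-edge survive because distinct copies of $\g$ commute, and contract the two copies of $s$ against the structure constants to reassemble the Cartan tensor $f^{ijk}$ and hence the $\tilde{\phi}$ terms, with the $24$ coming from the $\tfrac{1}{24}$ in the definition of $\tilde{\phi}$. The sign bookkeeping you defer does close as you predict --- e.g.\ in $[\tilde{s}_{ab},\tilde{s}_{bc}]$ the single surviving term gives $s^{ij}s^{kl}f^{m}_{jk}\,\iota_a(e_i)\wedge\iota_b(e_m)\wedge\iota_c(e_l) = f^{mil}\,\iota_a(e_i)\wedge\iota_b(e_m)\wedge\iota_c(e_l) = -24\,\tilde{\phi}_{abc}$ by antisymmetry of $f$ --- which is exactly the computation the paper carries out.
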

\begin{proof}
	The symmetry properties follow from (anti-)symmetry of $s^{ij}$ and $\phi^{ijk}$.
	Next,
	\begin{align*}	[\tilde{s}_{ab}, \tilde{s}_{ab}] &= t^{ij} t^{kl} (
	\iota_{a}([e_{i}, e_{k}])\wedge \iota_{b}(e_{j})\wedge \iota_{b}(e_{l})
	+ \iota_{a}(e_{i})\wedge \iota_{a}(e_{k}) \wedge \iota_{b}([e_{j}, e_{l}]) ) 
	\\&=  f^{mjl} \iota_{a}(e_{m})\wedge \iota_{b}(e_{j})\wedge \iota_{b}(e_{l})
	+ f^{nik} \iota_{a}(e_{i})\wedge \iota_{a}(e_{k}) \wedge \iota_{b}(e_n)
	\\&=24\tilde\phi_{abb} + 24\tilde\phi_{aab}\,.	
	\end{align*}
	For one common index, we have
	\begin{align*}	[\tilde{s}_{ab}, \tilde{s}_{bc}] &= t^{ij} t^{kl} 
	\iota_{a}(e_{i})\wedge \iota_{b}([e_{j}, e_{k}])\wedge \iota_{c}(e_{l}) 
	\\&=  f^{nil}\iota_{a}(e_{i})\wedge \iota_{b}(e_{n})\wedge \iota_{c}(e_{l}) = -24\tilde \phi_{abc}\,.
	\end{align*}
\end{proof}
\subsection{Lie superalgebras with an odd pairing}\label{app:oddLie}
Let us  denote by $e_{i}$ a homogeneous basis of an odd metric Lie algebra $\g$ as in Definition \ref{def:oddinvariantpairing}. Denote $t_{ij} = \langle e_{i}, e_{j} \rangle$ and $[e_{i}, e_{j}] = f^{k}_{ij}e_{k}$.
Then we can express $\phi$ and $\nu$ from Definition \ref{def:oddinvariantpairing} in coordinates and also define an invariant
element in $\bigwedge^{2}\g$ inverse to the pairing on $\g$.

\begin{proposition}\label{prop:oddmetric} \hspace{1mm}
	\begin{enumerate}
		\item	The Cartan element $\phi$ is invariant and graded-symmetric and thus defines an element
		$\phi \in (\mathrm{Sym}^{3}\g)^{\g}$. In coordinates, we have 
		\begin{equation}
		\phi = \phi^{xyz} e_{x} e_{y} e_{z} = \frac{(-1)^{\hdeg{e_{y}}}}{24}  t^{xj}f_{jk}^{y}t^{kz} e_{x}e_{y}e_{z},
		\end{equation}
		where $t^{ij}$ is the matrix inverse to $t_{ij}$
		\item The element $(-1)^{\hdeg{e_{i}}} t^{ij} e_{i}\wedge e_{j} \in \bigwedge^{2}\g$ is $\g$-invariant.
		\item The element 
		$(-1)^{\hdeg{e_{i}}} t^{ij} f_{ij}^{k} e_{k}\in \g$ is equal to $\nu$ and is 
		in the center of $\g$.
	\end{enumerate}
\end{proposition}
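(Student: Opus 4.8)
The three statements are the odd-metric counterparts of the facts collected in Appendix~\ref{app:evenLie} for the even case, so the plan is to mirror those arguments while carefully tracking the signs introduced by the oddness of the pairing. The common starting point for (1) and (3) is the tensor $C(x,y,z) := \langle [x,y], z\rangle$, which I would first show to be totally graded-antisymmetric. Antisymmetry in the first two slots is immediate from $[x,y] = -(-1)^{\hdeg{x}\hdeg{y}}[y,x]$, while antisymmetry in the last two slots follows by moving $[x,\cdot]$ across the pairing using the invariance axiom of Definition~\ref{def:oddinvariantpairing} and then applying graded-symmetry of $\langle\,,\,\rangle$; concretely $C(x,y,z) = -(-1)^{\hdeg{y}\hdeg{z}}C(x,z,y)$.

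For (1), the key observation is that, up to an overall sign, $\phi$ is obtained from $C$ by applying the odd isomorphism $t^{\#}$ in each slot: using $\alpha(w) = \langle t^{\#}\alpha, w\rangle$ one gets $24\,\phi(\alpha,\beta,\gamma) = \pm\, C(t^{\#}\beta, t^{\#}\gamma, t^{\#}\alpha)$. Each transposition of $\alpha,\beta,\gamma$ then produces one Koszul sign from permuting the arguments of $C$ and one further sign from commuting the odd maps $t^{\#}$ past one another. These two effects combine, via the standard suspension mechanism, to convert the total graded-antisymmetry of $C$ into the total graded-\emph{symmetry} of $\phi$, so $\phi \in (\Sym^3\g)^{\g}$. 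The coordinate formula is then a direct substitution: writing $t^{\#}$ of a dual basis vector as $\pm\, t^{ij}e_j$ and $[e_j,e_k] = f^l_{jk}e_l$, evaluating $\phi$ on dual basis vectors reproduces $\phi^{xyz} = \frac{(-1)^{\hdeg{e_y}}}{24}\, t^{xj} f^y_{jk} t^{kz}$ after collecting the signs.

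For (2), invariance of $t = (-1)^{\hdeg{e_i}} t^{ij} e_i \wedge e_j$ means $\ad_x t = 0$ for all $x\in\g$, where $\ad_x$ acts as a graded derivation of $\bigwedge\g$; this is the general fact that the inverse of an invariant nondegenerate pairing is invariant. I would expand $\ad_x t = (-1)^{\hdeg{e_i}} t^{ij}\big([x,e_i]\wedge e_j + (-1)^{\hdeg{x}\hdeg{e_i}} e_i \wedge [x,e_j]\big)$ and substitute the invariance relation $\langle[x,e_i],e_j\rangle = -(-1)^{\hdeg{x}\hdeg{e_i}}\langle e_i,[x,e_j]\rangle$ contracted with $t^{ij}$; the two terms then cancel. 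This is the direct odd analogue of the Casimir-invariance computation. For (3), I would first show that $\nu' := (-1)^{\hdeg{e_i}} t^{ij} f^k_{ij} e_k$ satisfies $\langle \nu', x\rangle = \mathrm{str}_{\g}\ad_x$, which by nondegeneracy identifies it with $\nu$. Lowering all indices via $f_{ijk} := f^l_{ij} t_{lk} = C(e_i,e_j,e_k)$ (totally graded-antisymmetric by the first paragraph), the left-hand side becomes $(-1)^{\hdeg{e_i}} t^{ij} f_{ijp}$, while $\mathrm{str}_{\g}\ad_{e_p} = \sum_m (-1)^{\hdeg{e_m}} f^m_{pm} = (-1)^{\hdeg{e_m}} t^{km} f_{pmk}$; the two agree using the (anti)symmetries of $f_{ijk}$ and $t^{ij}$ together with the parity constraints forced by the odd pairing (both sides vanish unless $e_p$ is even, since $\mathrm{str}\ad_{e_p}=0$ for odd $e_p$). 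Centrality is then conceptual: invariance gives $\langle[x,\nu],z\rangle = -(-1)^{\hdeg{x}\hdeg{\nu}}\langle\nu,[x,z]\rangle = -(-1)^{\hdeg{x}\hdeg{\nu}}\mathrm{str}_{\g}\ad_{[x,z]}$, and $\mathrm{str}_{\g}\ad_{[x,z]} = \mathrm{str}_{\g}[\ad_x,\ad_z] = 0$ since the supertrace of a supercommutator vanishes; nondegeneracy forces $[x,\nu]=0$.

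The conceptual skeleton is identical to the even case, so the main obstacle is purely sign bookkeeping. The most delicate point is confirming in (1) that threading the total antisymmetry of $C$ through three copies of the odd map $t^{\#}$ yields total \emph{symmetry} rather than antisymmetry, and in (3) that the two index contractions match once the support constraints of the odd pairing are taken into account.
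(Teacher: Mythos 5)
Your proposal is correct; it establishes all three parts, but by routes that differ from the paper's in instructive ways. For (1), the paper argues directly on $\phi$: graded symmetry under transposition of the last two arguments comes from the definition, cyclic symmetry comes from invariance of the pairing (via $\langle x,[y,z]\rangle = \langle z,[x,y]\rangle$), and the two generate full symmetry because $\hdeg{\alpha}+\hdeg{\beta}+\hdeg{\gamma}\equiv 0$ on any non-vanishing evaluation. Your route --- total graded antisymmetry of $C(x,y,z)=\langle[x,y],z\rangle$ threaded through three copies of the odd map $t^{\#}$ --- is exactly the alternative the paper records in Remark \ref{rem:decalage}, where the chain $\Pi\otimes\bigwedge^{3}(\g^{*})\cong\Pi\otimes\bigwedge^{3}(\Pi\g)\cong\Sym^{3}(\g)$ sends this tensor to $-\phi$; so for this part you chose the paper's ``remark'' proof over its ``main'' proof. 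For (2), the paper does not run your Casimir-style cancellation; it instead computes $\ad_{e_{k}}\bigl((-1)^{\hdeg{e_{i}}}t^{ij}e_{i}\wedge e_{j}\bigr)=(-1)^{\hdeg{e_{k}}}48\,t_{kc}\phi^{cxy}e_{x}\wedge e_{y}$, which vanishes because the graded-symmetric $\phi^{cxy}$ is contracted with the graded-antisymmetric $e_{x}\wedge e_{y}$, i.e.\ (2) is a corollary of (1). Likewise for centrality in (3): the paper observes that $\nu=(-1)^{\hdeg{e_{i}}}t^{ij}[e_{i},e_{j}]$ is the image of the invariant element of (2) under the equivariant bracket map $\bigwedge^{2}\g\to\g$, hence invariant, hence central. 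Your argument --- $\langle[x,\nu],z\rangle=\pm\langle\nu,[x,z]\rangle=\pm\,\mathrm{str}_{\g}\ad_{[x,z]}=\pm\,\mathrm{str}_{\g}[\ad_{x},\ad_{z}]=0$, then nondegeneracy --- is self-contained, needs neither (1) nor (2), and makes centrality a property of the defining equation of $\nu$ rather than of its coordinate formula; this is arguably cleaner. The overall trade-off: the paper threads everything through the symmetry of $\phi$, so (2) and (3) are nearly free once (1) is done, while your version pays for its modularity by repeating the sign and parity bookkeeping in each part (your cancellation in (2) and the index-matching in (3) both need the support constraint that $t^{ij}\neq 0$ only when $\hdeg{e_{i}}+\hdeg{e_{j}}=1$, which you correctly flag as the delicate point). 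Both routes are sound.
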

\begin{proof}
	\begin{enumerate}
		\item From the definition of $\phi$, we have
		\[ \phi(\alpha, \gamma, \beta) = 
		(-1)^{(\hdeg{\gamma}+1)(\hdeg{\beta}+1)+1 +\hdeg\gamma -\hdeg\beta}\phi(\alpha, \beta, \gamma){}
		= (-1)^{\hdeg{\beta}\hdeg{\gamma}}\phi(\alpha, \beta, \gamma).\]
		From the invariance of the pairing we have for any $x, y, z\in\g${}
		\[ \langle x, [y, z] \rangle = -(-1)^{\hdeg{x}\hdeg{y}} \langle [y, x], z \rangle{}
		=  \langle z, [x, y] \rangle.\]
		Thus, $\phi(\alpha, \beta, \gamma) \propto (-1)^{\hdeg{\beta}}\langle t^{\#}\alpha , [t^{\#}\beta, t^{\#}\gamma]\rangle$ 
		satisfies
		\[\phi(\beta, \gamma, \alpha) = (-1)^{\hdeg{\alpha} + \hdeg\beta}\phi(\alpha, \beta, \gamma){}
		= (-1)^{\hdeg{\gamma}}\phi(\alpha, \beta, \gamma),\]
		which means $\phi$ is symmetric (since $\hdeg \alpha + \hdeg \beta + \hdeg \gamma \stackrel{\mathrm{mod}\,2}{=} 0$ for the result to be non-zero).
		
		The invariance follows from this as
		\begin{align*} &\langle [w, x], [y, z]\rangle  + (-1)^{\hdeg w \hdeg x}\langle x, [w, [y, z]] \rangle
		\\&  =
		\langle [w, x], [y, z]\rangle  + (-1)^{\hdeg w \hdeg x}\langle [y, z], [x, w] \rangle\,. \end{align*}
		Now just use that $ \phi (\ad_{x} (\alpha\ot \beta \ot \gamma))$ is (up to a numerical factor)
		equal to $\langle \cdot , [\cdot,\cdot]\rangle$ evaluated on $ \ad_{x} (t^{\#}\alpha \ot t^{\#}\beta \ot t^{\#}\gamma) $
		\medskip{}

		Evaluating $\phi$ on three basis elements of $\g^{*}$, we get
		\begin{align*} &\phi(e^{x}, e^{y}, e^{z}) = (-1)^{\hdeg{e_{y}}\hdeg{e_{x}}}\phi(e^{y}, e^{x}, e^{z}) 
		\\&= (-1)^{\hdeg{e_{x}}+\hdeg{e_{y}}\hdeg{e_{x}}}\frac 1 {24} e^{y} ( [t^{xi}e_{i}, t^{zj} e_{j}] ) 
		= (-1)^{\hdeg{e_{x}}+\hdeg{e_{y}}\hdeg{e_{x}}}\frac 1 {24}  t^{xi} f^{y}_{ij} t^{jz} , \end{align*}
		where we used that $t^{ij} = (-1)^{\hdeg{e_{i}}\hdeg{e_{j}}} t^{ji} = t^{ji}$.
		This corresponds to the formula $\phi =(-1)^{\hdeg{e_{y}}}\frac 1 {24} 
		t^{xj}f_{jk}^{y}t^{kz} e_{x} e_{y} e_{z}$,
		since pairing this with $e^{x}, e^{y}, e^{z}$ gives
		\[ (-1)^{\hdeg{e_{y}} +\hdeg{e_{x}}(\hdeg{e_{y}}+\hdeg{e_{z}}) +\hdeg{e_{y}}\hdeg{e_{z}} }
		\frac 1 {24} t^{xj}f_{jk}^{y}t^{kz} \]
		and these two signs are equal, since $\hdeg{e_{x}} + \hdeg{e_{y}} + \hdeg{e_{z}} = 0$. Note that the 
		sign $(-1)^{\hdeg{e_{x}}(\hdeg{e_{y}}+\hdeg{e_{z}}) +\hdeg{e_{y}}\hdeg{e_{z}}}$ is the Koszul
		sign from the pairing of $e_{x} \ot e_{y} \ot e_{z}$ with $e^{x}\ot e^{y}\ot e^{z}$.
		
		\item It follows from a direct calculation, using the symmetry of $\phi$, that
		$$\ad_{e_{k}} (-1)^{\hdeg{e_{i}}}t^{ij} e_{i} \wedge e_{j} = 
		(-1)^{\hdeg{e_{k}}} 48 \,t_{kc}\phi^{cxy} e_{x} \wedge e_{y} = 0.$$
		Alternatively,
		see Remark \ref{rem:decalage}.
		\item By definition, $\nu$ is equal to $\nu^{i}e_{i} = (-1)^{\hdeg{e_{k}}}f_{lk}^{k}t^{li}e_{i}$. From
		the symmetry of $\phi^{xyz}$, we get $(-1)^{\hdeg{e_{k}}}t^{il}f^{k}_{lx} = (-1)^{\hdeg{e_{k}}}f^{i}_{xl}t^{lk}$, 
		which gives
		\[  (-1)^{\hdeg{e_{k}}}t^{il}f_{lk}^{k} =  (-1)^{\hdeg{e_{k}}}f_{kl}^{i}t^{lk} \,.\]
		Because the Lie bracket commutes with the action of $\g$, we get that $\nu = (-1)^{\hdeg {e_{i}}} t^{ij}[e_{i}, e_{j}]$
		is invariant by the previous point.
	\end{enumerate}
\end{proof}

Recall that $\Pi$ is the vector space $\mathbb R^{0|1}$ and $\Pi \g := \Pi \otimes \g$.
The implicit Koszul sign from commuting with $\Pi$ allows us to state the above result
more invariantly.
\begin{remark} \label{rem:decalage}
	One can use the isomorphism $\g^{*} \cong \Pi \g${}
	and the \emph{décalage} isomorphism $\bigwedge^{n}(\Pi\g) \cong \Pi^n\otimes \mathrm{Sym}^{n}(\g)$ to 
	clarify the previous proposition. The map $x\ot y\ot z \mapsto \langle x, [y, z]\rangle${}
	is graded antisymmetric, and thus can be seen as an odd element of $\bigwedge^{3}(\g^{*})$.
	Using the following sequence of $\g$-equivariant isomorphisms, 
	$$ \Pi\otimes\bigwedge^{3}(\g^{*}) \cong \Pi\otimes\bigwedge^{3}(\Pi\g) \cong \mathrm{Sym}^{3}(\g), $$
	this element is mapped to $-\phi$.
	Similarly, the odd pairing lives in $\mathrm{Sym}^{2}(\g^{*})\cong \mathrm{Sym}^{2}(\Pi\g) \cong  \bigwedge^{2}(\g)$; $t_{ij}e^{i}e^{j}$ gets sent to $(-1)^{\hdeg{e_{i}}}t^{ij} e_{i}\wedge e_{j}$, on which we can apply the Lie bracket to get $\nu$.
\end{remark}

Note that we can map $\phi$ to $U\g$ using the symmetrization map. Explicitly,
$\phi$ as an element of $U\g$ is equal to $\phi^{xyz} e_{x} e_{y} e_{z} =
\frac{(-1)^{\hdeg{e_{y}}}}{24}  t^{xj}f_{jk}^{y}t^{kz} e_{x}e_{y}e_{z}$.
\medskip

Let us also prove the odd analogue of Proposition \ref{prop:tproperties} for elements $\tilde \nu, \tilde t$ and $\tilde \phi$, defined in the proof of Theorem \ref{thm:qBVFR}. The only added feature is that $\tilde t_{aa}$ is not zero.
\begin{proposition}[Properties of $t$]\label{prop:tproperties}
	We have $\tilde{t}_{ab} = - \tilde{t}_{ba}$ for $a\neq b$ and
	$\tilde{t}_{aa} = \iota_{a}(\nu)/2$. The element
	$\tilde{\phi}_{abc}$ is symmetric w.r.t. permutation of its labels. Both are
	invariant under the diagonal action of $\g$.
	
	For $a\neq b$
	\[ [\tilde{t}_{ab}, \tilde{t}_{ab}] = 24(\tilde{\phi}_{aab} + \tilde{\phi}_{abb}) \,,\]
	and for $a, b$ and $c$ different, 
	\[ [\tilde{t}_{ab}, \tilde{t}_{bc}] =  -  24 \tilde{\phi}_{abc} .\]
	Finally, for $a$, $b$, $c$ and $d$ all distinct, 
	$ [\tilde{t}_{ab}, \tilde{t}_{cd}] = 0$.
\end{proposition}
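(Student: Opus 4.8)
The plan is to mirror the computation in Proposition~\ref{prop:sproperties}, replacing the Gerstenhaber algebra $\bigwedge\g^{\he(p)}$ and its Schouten bracket by the associative algebra $U\g^{\he(p)}$ and the graded commutator, while feeding in the coordinate expressions and the symmetry/invariance statements for $\phi$, $\nu$ and $t$ collected in Proposition~\ref{prop:oddmetric}. Throughout I will use that, since $\g^{\he(p)} = \bigoplus_a \g$ is a direct sum of Lie superalgebras, homogeneous elements in distinct copies super-commute in $U\g^{\he(p)}$, i.e. $\iota_a(x)\iota_b(y) = (-1)^{\hdeg x \hdeg y}\iota_b(y)\iota_a(x)$ for $a \neq b$. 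I will also repeatedly use that $t^{ij} \neq 0$ forces $e_i$ and $e_j$ to have opposite parity (because $t$ is odd), so that on the support $\hdeg{e_i}\hdeg{e_j} = 0$, $(-1)^{\hdeg{e_j}} = -(-1)^{\hdeg{e_i}}$, and $t^{ji} = t^{ij}$.

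For the symmetry statements, for $a \neq b$ I would move $\iota_a(e_i)$ and $\iota_b(e_j)$ past each other (trivial Koszul sign on the support of $t$) and relabel $i \leftrightarrow j$; combining $t^{ji} = t^{ij}$ with $(-1)^{\hdeg{e_j}} = -(-1)^{\hdeg{e_i}}$ yields $\tilde{t}_{ab} = -\tilde{t}_{ba}$. The genuinely new feature is $\tilde{t}_{aa}$, which in the even case vanishes by antisymmetry of the wedge but here does not: the same relabelling inside a single copy gives $2\tilde{t}_{aa} = (-1)^{\hdeg{e_i}}t^{ij}\iota_a([e_i,e_j]) = \iota_a(\nu)$ by the formula $\nu = (-1)^{\hdeg{e_i}}t^{ij}[e_i,e_j]$ of Proposition~\ref{prop:oddmetric}, so $\tilde{t}_{aa} = \iota_a(\nu)/2$. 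The symmetry of $\tilde\phi_{abc}$ under permuting distinct labels reduces, after moving $\iota$'s between copies, to the graded symmetry of $\phi^{xyz}$ established in Proposition~\ref{prop:oddmetric}, with the Koszul sign from transposing copies cancelling exactly the sign in $\phi^{yxz} = (-1)^{\hdeg{e_x}\hdeg{e_y}}\phi^{xyz}$.

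Invariance of $\tilde{t}$ and $\tilde\phi$ under the diagonal action $\rho_p(x) = \sum_a \iota_a(x)$ follows from the invariance of $t$ and $\phi$ in Proposition~\ref{prop:oddmetric}: the commutator $[\rho_p(x),\tilde{t}_{ab}]$ only sees copies $a$ and $b$ and equals $(\iota_a \ot \iota_b)$ applied to $\ad_x\, t$, which vanishes, and similarly for $\tilde\phi$. For the bracket relations, the case of four distinct labels is immediate: $\tilde{t}_{ab}$ and $\tilde{t}_{cd}$ are odd elements supported on disjoint copies, hence they super-commute, $\tilde{t}_{ab}\tilde{t}_{cd} = -\tilde{t}_{cd}\tilde{t}_{ab}$, and the graded commutator $[\tilde{t}_{ab},\tilde{t}_{cd}] = \tilde{t}_{ab}\tilde{t}_{cd} + \tilde{t}_{cd}\tilde{t}_{ab}$ vanishes. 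For $[\tilde{t}_{ab},\tilde{t}_{bc}]$ only copy $b$ is active, so the commutator collapses to $t^{ij}t^{kl}\iota_a(e_i)\iota_b([e_j,e_k])\iota_c(e_l)$ up to a Koszul sign; substituting $[e_j,e_k] = f^n_{jk}e_n$ and matching $t^{\bullet j}f^n_{jk}t^{k\bullet}$ against the coordinate formula $\phi^{xyz} = \tfrac{(-1)^{\hdeg{e_y}}}{24}t^{xj}f^y_{jk}t^{kz}$ of Proposition~\ref{prop:oddmetric} yields $-24\,\tilde\phi_{abc}$. The self-bracket $[\tilde{t}_{ab},\tilde{t}_{ab}] = 2\tilde{t}_{ab}^2$ is handled the same way, with the bracket now landing in both copy $a$ and copy $b$, producing the two terms $24(\tilde\phi_{aab} + \tilde\phi_{abb})$.

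The main obstacle is purely the Koszul-sign bookkeeping: I must verify that every transposition of copies, every relabelling of summation indices, and every use of the opposite-parity constraint on the support of $t$ conspire to reproduce exactly the coefficients $24$ and $-24$ and the correct overall sign, despite the extra factors $(-1)^{\hdeg{e_i}}$ built into $\tilde{t}_{ab}$ and $(-1)^{\hdeg{e_y}}$ built into $\phi^{xyz}$. I expect the cleanest route is to perform each computation via the décalage isomorphism $\bigwedge^n(\Pi\g) \cong \Sym^n\g$ of Remark~\ref{rem:decalage}, which transports the already-verified even identities of Proposition~\ref{prop:sproperties} to the odd setting and automatically accounts for the signs, leaving only the nonvanishing of $\tilde{t}_{aa}$ as a genuinely new input.
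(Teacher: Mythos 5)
Your proposal is correct and follows essentially the same route as the paper's proof: a direct computation in $U\g^{\he(p)}$, antisymmetrizing $\tilde{t}_{aa}$ to produce $\iota_a(\nu)/2$ via $\nu = (-1)^{\hdeg{e_i}}t^{ij}[e_i,e_j]$, collapsing $[\tilde{t}_{ab},\tilde{t}_{bc}]$ onto the common copy $b$ and matching the result against the coordinate formula for $\phi^{xyz}$ from Proposition~\ref{prop:oddmetric}, and using supercommutativity of elements in disjoint copies for the four-distinct-labels case. The only caveat is your closing aside: the d\'ecalage isomorphism of Remark~\ref{rem:decalage} does not literally transport Proposition~\ref{prop:sproperties} (Schouten-bracket identities in $\bigwedge\g^{\he(p)}$ for an \emph{even} metric Lie algebra) into commutator identities in $U\g^{\he(p)}$ for an odd metric Lie superalgebra, so that proposed shortcut would need real justification --- but since your direct computation never relies on it, the proof stands as written.
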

\begin{proof}[Proof of Proposition \ref{prop:tproperties}]
	For $\tilde{t}_{aa}$, we get 
	\begin{align*} \tilde{t}_{aa} &= (-1)^{\hdeg {e_{i}}} t^{ij}  \iota_{a}{(e_{i})} \iota_{a}{(e_{j})}
	\\&=  (-1)^{\hdeg {e_{i}}} \tfrac 12 t^{ij} (\iota_{a}{(e_{i})} \iota_{a}{(e_{j})}- \iota_{a}{(e_{j})} \iota_{a}{(e_{i}})) 
	\\&= \tfrac 12 \iota_{a} \left((-1)^{\hdeg {e_{i}}} t^{ij} f_{ij}^{k} e_{k}\right).\end{align*}
	
	Let us calculate only $[\tilde{t}_{ab}, \tilde{t}_{bc}]$:
	\begin{align*}
	[\tilde{t}_{ab}, \tilde{t}_{bc}] &= 
	(-1)^{\hdeg{e_{i}}+\hdeg{e_{k}}} t^{ij} t^{kl} 
	\iota_{a}(e_{i}) \iota_{b}([e_{j}, e_{k}]) \iota_{c} (e_{l})
	\\&= (-1)^{\hdeg{e_{i}}+\hdeg{e_{j}}+\hdeg{{e_{m}}}} t^{ij} f^{m}_{jk} t^{kl} 
	\iota_{a}(e_{i}) \iota_{b}(e_{m}) \iota_{c} (e_{l})
	\\&= - 24\phi^{iml}	\iota_{a}(e_{i}) \iota_{b}(e_{m}) \iota_{c} (e_{l}).
	\end{align*}
	The calculation for $[\tilde{t}_{ab}, \tilde{t}_{ab}]$ is analogous.
\end{proof}

\begin{remark}
	The elements $s/t$ and $\phi$ satisfy similar relations, as shown in Proposition \ref{prop:sproperties} and \ref{prop:tproperties}.
	This can be seen as having a morphism to $\bigwedge \g^{H}$ or $U\g^{H}$
	from a super analogue of the Drinfeld-Kohno Lie algebra, which we will call $\mathfrak{\hat{p}}_{\mathrm{odd}}(n)$, with $n = |H|$.
	
	The algebra $\mathfrak{\hat{p}}_{\mathrm{odd}}(n)$ is generated by odd elements
	$\{t_{ab}\}_{a, b \in \{1, \dots, n\}}$ and even elements $\{\phi_{aaa}\}_{a \in \{1, \dots, n\}}$,
	where $t_{aa}$, $\phi_{aaa}$ are central and $t_{ab}$ satisfy $t_{ba} = - t_{ab}$ for $a\neq b$
	and \[ [t_{ab}, t_{ac} + t_{bc}] = 0, \quad \quad [t_{ab}, t_{cd}] = 0 \]
	for $a, b, c$ or $a, b, c, d$ all distinct. The elements $\phi_{aab}+\phi_{abb}$
	and $\phi_{abc}$ can be defined as the commutators of $t$'s; the relation
	$[t_{ab}, t_{ac} + t_{bc}] = 0$ tells us that $\phi_{abc}$ is symmetric.
	
	A $\mathbb Z$-graded version of this
	algebra, without the central elements, appears in the study of 
	rational cohomology of the little $n$-discs operad for odd $n$, 
	see \cite[Part~II, Section~14.1.1 and Theorem~14.1.14]{Fresse}
\end{remark}

\section{Hopf-like algebras governing the fusion of quasi-BV structures}\label{app:fusion}
We can characterize $\g$-quasi-BV manifolds as being manifolds with action of 
an algebra $\mathcal H^\g$ by differential operators, which we define below. Then, the fusion of quasi-BV manifolds is captured by a coproduct-like structure on these algebras.	
\subsection{Quasi-BV manifolds and Hopf algebras}
\begin{definition}
	Let $(\g, t)$ be an odd metric Lie algebra. Define
	\[\mathcal H^\g \equiv U \g \otimes U (\fld \BV),\]
	where the odd generator $\BV$ (graded) commutes with the Lie algebra $\g$ and satisfies 
	\[\BV^2 \equiv \frac 1{24} f^{ijk} e_i e_j e_k \in \Sym^3(\g) \subset U \g \,.\]
	Here $f^{ijk} =  (-1)^{k+jk} f_{ab}^k t^{ai}  t^{bj}$ is graded symmetric, so one can write the image of $f^{ijk} e_i e_j e_k$ in $U \g$ as just $f^{ijk} e_i e_j e_k$. 
	\medskip

	The coproduct is defined as 
	\begin{align*}
	\tilde\cp (\xi) &= \xi \otimes 1 + 1 \otimes \xi, \;\; \text{where } \xi \in \g \,, \\
	\tilde\cp (\BV) &=  \BV\otimes 1 + 1\otimes \BV +  \frac{1}{2}(-1)^{i}  t^{ij} e_i \otimes e_j \,.
	\end{align*}
	
	The antipode turns out to be the regular antipode for $\xi \in \g$ and
	$$S(\BV) = -\BV + \nu/4.$$
%
	Finally, let us introduce an additional filtration on $\mathcal H^\g$, which is the usual filtration on $U\g$ and where $\BV$ increases filtration degree by 2.
\end{definition}

On a  supermanifold $M$ with \g-action, a quasi-BV structure can be encoded into a morphism from $\mathcal H^\g$ into the algebra of differential operators on $M$:
\begin{proposition}\label{PROPHopf}
	The above definition makes $\mathcal H^\g$ into a Hopf algebra.
	
	A \emph{\g-quasi-BV structure on $M$} is equivalently given by an algebra map $\mathcal H^\g \to \operatorname{DiffOp}(M)$ 
	which preserves parity, filtration and sends $\Delta$ and any $\xi \in \g$ to operators annihilating the constant function $1\in \mathcal O(M)$.
	\end{proposition}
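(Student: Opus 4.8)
The plan is to prove the two assertions separately, treating the Hopf structure first. For the algebra itself, I would observe that $\phi = \tfrac16 f^{ijk}e_ie_je_k$ is the $\g$-invariant Cartan element of $(\Sym^3\g)^\g$, hence central in $U\g$, and is even (every surviving monomial $e_xe_ye_z$ has $\hdeg{e_x}+\hdeg{e_y}+\hdeg{e_z}\equiv 0$); thus adjoining an odd generator $\Delta$ that graded-commutes with $\g$ and satisfies $\Delta^2=\phi$ gives a well-defined filtered superalgebra, with $\Delta$ raising filtration by $2$ consistently with $\deg\phi=3\le 4$. The real content of the first assertion is that $\tilde\cp$ is an algebra homomorphism: beyond the standard $U\g$ relations, the only thing to check is $\tilde\cp(\Delta)^2=\tilde\cp(\Delta^2)=\tilde\cp(\phi)$, together with the requirement that $\tilde\cp(\Delta)$ graded-commute with $\tilde\cp(\g)$ (which follows from $\g$-invariance of $t$, Proposition \ref{prop:oddmetric}(2)).

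Writing $\tilde\cp(\Delta)=\Delta\otimes 1+1\otimes\Delta+t_{12}$ with $t_{12}=(-1)^{\hdeg{e_i}}t^{ij}e_i\otimes e_j$, and using that all three summands are odd, I would expand the square as $\Delta^2\otimes 1+1\otimes\Delta^2+t_{12}^2+\{\Delta\otimes 1,1\otimes\Delta\}+\{\Delta\otimes 1,t_{12}\}+\{1\otimes\Delta,t_{12}\}$. The first two terms give $\phi\otimes 1+1\otimes\phi$; the anticommutator $\{\Delta\otimes 1,1\otimes\Delta\}$ vanishes since two odd elements in distinct tensor legs anticommute; and $\{\Delta\otimes 1,t_{12}\}$ vanishes because $\Delta$ graded-commutes with $\g$, so the two orderings differ by $1+(-1)^{\hdeg{e_i}+\hdeg{e_j}}$, which is zero as the odd pairing forces $\hdeg{e_i}+\hdeg{e_j}$ to be odd. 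This leaves $t_{12}^2$, which must equal the cross terms $\tilde\cp(\phi)-\phi\otimes 1-1\otimes\phi$ of the standard coproduct of $\phi$ on $U\g$; this identity is exactly Proposition \ref{prop:tproperties} (the relation $[\tilde t_{ab},\tilde t_{ab}]=24(\tilde\phi_{aab}+\tilde\phi_{abb})$) read off for the two tensor factors $a=1,b=2$.

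The remaining Hopf axioms are routine. Coassociativity on $\Delta$ is immediate: using $\tilde\cp(e_i)=e_i\otimes 1+1\otimes e_i$, both $(\tilde\cp\otimes\mathrm{id})\tilde\cp(\Delta)$ and $(\mathrm{id}\otimes\tilde\cp)\tilde\cp(\Delta)$ equal $\Delta_1+\Delta_2+\Delta_3+t_{12}+t_{13}+t_{23}$. The counit axioms hold because $\varepsilon(\Delta)=\varepsilon(\xi)=0$ and $\varepsilon(e_i)$ annihilates the $t$-term. For the antipode, applying $m(S\otimes\mathrm{id})\tilde\cp$ and $m(\mathrm{id}\otimes S)\tilde\cp$ to $\Delta$ and using $S(e_i)=-e_i$ together with $\tilde t_{aa}=\tfrac12\iota_a(\nu)$ (Proposition \ref{prop:tproperties}) reduces each antipode identity to a linear relation pinning down $S(\Delta)$ in terms of $\Delta$ and $\nu$; I would record this and verify the stated formula satisfies both identities, tracking Koszul signs carefully.

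For the second assertion I would argue that the two data are mutually inverse. Given an algebra map $F\colon\mathcal H^\g\to\operatorname{DiffOp}(M)$ preserving parity and filtration with $F(\xi)(1)=F(\Delta)(1)=0$, filtration-preservation makes each $F(\xi)$ an operator of order $\le 1$, and $F(\xi)(1)=0$ removes its zeroth-order part, so $F(\xi)$ is a vector field; since $F$ is an algebra map, $F|_{U\g}$ is a genuine $\g$-action. Then $\Delta_M:=F(\Delta)$ is odd, of order $\le 2$, annihilates $1$, graded-commutes with the action (because $\Delta$ does in $\mathcal H^\g$), and satisfies $\Delta_M^2=F(\phi)=\phi$ acting through $U\g$ — precisely Definition \ref{def:qBVmfld}. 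Conversely, a $\g$-quasi-BV structure defines $F$ on generators by the action and by $\Delta\mapsto\Delta_M$, and the defining relations of $\mathcal H^\g$ (namely $\g$-invariance of $\Delta$ and $\Delta^2=\phi$) are exactly what Definition \ref{def:qBVmfld} supplies, with parity and filtration matching by construction. I expect the one genuine obstacle to be the identity $t_{12}^2=\tilde\cp(\phi)-\phi\otimes 1-1\otimes\phi$ with all Koszul signs in place; everything else is either standard $U\g$ Hopf theory or a direct unwinding of Definition \ref{def:qBVmfld}, and I would dispatch that identity by citing Proposition \ref{prop:tproperties} rather than re-deriving the super Drinfeld–Kohno relation.
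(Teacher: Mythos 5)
Your overall architecture is sound, and it supplies far more than the paper does: the paper's own proof is a single sentence unwinding only the second assertion (filtration preservation plus $F(\xi)1=F(\BV)1=0$ forces $F(\xi)$ to be a vector field and $F(\BV)$ an odd second-order operator, and the defining relations of $\mathcal H^\g$ then translate verbatim into Definition \ref{def:qBVmfld}); all of the Hopf-algebra checks you list are left implicit there. Your treatment of the mixed anticommutators (vanishing because the odd pairing forces $\hdeg{e_i}+\hdeg{e_j}$ odd), of coassociativity, and of the counit is correct, and deriving the antipode from the axiom rather than assuming the stated formula is the right plan --- indeed, carrying out the derivation with $(-1)^{\hdeg{e_i}}t^{ij}e_ie_j=\nu/2$ pins down $S(\BV)=-\BV+\nu/2$, which does not agree with the paper's stated $S(\BV)=-\BV-\nu$, so the ``verify the stated formula'' step you propose would not close if taken on faith.

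The genuine gap is in the one step you declare dispatchable by citing Proposition \ref{prop:tproperties}: as you state it, the identity is false. In $\mathcal H^\g$ the relation is $\BV^2=\frac16 f^{ijk}e_ie_je_k$, and this element is \emph{not} the Cartan element $\phi$ of Definition \ref{def:oddinvariantpairing}; comparing normalizations it equals $4\phi$. Proposition \ref{prop:tproperties} gives $t_{12}^2=\frac12[\tilde{t}_{12},\tilde{t}_{12}]=12(\tilde{\phi}_{112}+\tilde{\phi}_{122})$, whereas the cross terms of the standard coproduct of $\phi$ are only $3(\tilde{\phi}_{112}+\tilde{\phi}_{122})$ (expand the product of three primitives and use graded symmetry of $\phi^{xyz}$). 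Concretely, in $\q(1)$ one has $t_{12}=e_1\ot e_2-e_2\ot e_1$, hence $t_{12}^2=e_1^2\ot e_1+e_1\ot e_1^2$, while $\phi=\frac1{12}e_1^3$ has coproduct cross terms $\frac14(e_1^2\ot e_1+e_1\ot e_1^2)$; the match occurs only for $\BV^2=\frac16 f^{111}e_1^3=\frac13 e_1^3$. So the identity you need is $t_{12}^2=\tilde\cp(\BV^2)-\BV^2\ot1-1\ot\BV^2$ with the paper's normalization of $\BV^2$, and proving it requires \emph{both} Proposition \ref{prop:tproperties} \emph{and} the coproduct expansion, with the factors matching as $12=4\cdot3$; citing the proposition alone, with $\phi$ read as the Cartan element, yields a statement off by a factor of $4$. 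The same factor then infects your second half: Definition \ref{def:qBVmfld} demands $\Delta_M^2=\phi$, while an algebra map out of $\mathcal H^\g$ forces $\Delta_M^2=4\phi$. This mismatch is a normalization slip inside the paper itself, but your ``direct unwinding'' inherits it silently; you should either flag it or repair it (rescale $\BV$ or $t$, or read the $\phi$ of Definition \ref{def:qBVmfld} as $\frac16 f^{ijk}e_ie_je_k$) for the claimed equivalence to be literally true.
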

We will denote the image of $\BV$ by $\BV$ again.
\begin{proof}
This means that elements of $\g$ are mapped to vector fields and $\BV$ is mapped to an odd second-order differential operator $\BV$, such
that $M$ has an action of $\g$ and the operator $\BV$ squares to the action of $\phi \in U\g$.
\end{proof}
\subsection{Fusion}

We will now introduce two classes of algebra maps between these Hopf algebras. For moduli spaces of flat connections, these maps will correspond to fusion and disjoint union of the underlying surfaces.
\begin{definition}
	Let $\g, \h, \h_{1,2}$ be odd metric Lie superalgebras. Let $${\cp}\colon \mathcal H^{\g\oplus \h} \to \mathcal H^{\g\oplus \g\oplus \h}$$ be the algebra map defined by 
	sending $\BV$ to $\BV + \tfrac{1}{2} (-1)^{e_i} t_\g^{ij} e_i \ot e_j$, where the second term is an element of $\g\ot \g \subset U(\g)^{\ot2}\cong U(\g\oplus \g)$, and by sending an element $\xi\in \g$ to $(\xi, \xi)\in \g\oplus\g$. 
	
	Let $$i\colon \mathcal H^{\h_1 \oplus \h_2} \to \mathcal H^{\h_1}\otimes \mathcal H^{\h_2}$$ be the algebra map defined by sending  $\BV$ to $\BV\ot1+1\ot \BV$ and by sending $\eta_1\in\h_1$ to $\eta_1\ot 1$ and $\eta_2\in \h_2$ to $1\ot \eta_2$.
	
%
%
%
	
	A fusion at $\g$ of a $\g\oplus\g\oplus \h$-quasi-BV manifold $M$ is defined by the composition
	\[ \mathcal H^{\g\oplus \h} \xrightarrow{\cp} \mathcal H^{\g\oplus\g\oplus \h}\to \operatorname{DiffOp}(M). \]

	A fusion at $\g$ of a $\g\oplus \h_1$-quasi-BV manifold and $\g\oplus \h_2$-quasi-BV manifold is defined by the composition
\[ \mathcal H^{\g\oplus \h_1\oplus \h_2} \xrightarrow{i\circ \cp} \mathcal H^{\g\oplus \h_1}\ot \mathcal H^{\g\oplus \h_2} \to \operatorname{DiffOp}(M)\otimes \operatorname{DiffOp}(N) \to \operatorname{DiffOp}(M\times N). \]
\end{definition}

%

\begin{proposition}\label{PROPFusion}
	The maps ${\cp}$ and $i$ are well defined. The fusion is associative in the sense that the following diagram commutes
\[
\begin{tikzcd}
\mathcal H^{\g\oplus \h }  \arrow[r, "\cp"] \arrow[d, "\cp"] & 	\mathcal H^{\g\oplus\g \oplus \h} \arrow[d, "\cp_2"] \\
\mathcal H^{\g\oplus\g\oplus \h} \arrow[r, "\cp_1"] & 	\mathcal H^{\g\oplus\g\oplus\g\oplus \h}
\end{tikzcd},
\]
where in $\cp_{1,2}\colon \mathcal H^{\g\oplus\g\oplus \h} \to \mathcal H^{\g\oplus\g\oplus\g\oplus \h} $, the index specifies on which $\g$ we act.
\end{proposition}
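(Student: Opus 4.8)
The plan is to check that each of the two prescriptions respects the defining relations of its source algebra, so that it extends to a well-defined filtered algebra map, and then to verify the associativity square on algebra generators, where both composites are automatically algebra maps.

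\textbf{Well-definedness.} Since $\mathcal H^{\g\oplus\h}$ (resp. $\mathcal H^{\h_1\oplus\h_2}$) is generated as an algebra by the Lie algebra together with $\BV$, it suffices to verify the three families of relations: the Lie (super)algebra relations, the relation that $\BV$ graded-commutes with the Lie algebra, and $\BV^2=\tfrac16 f^{ijk}e_ie_je_k$. The Lie relations hold because on Lie algebras $\cp$ is the diagonal $\g\to\g\oplus\g$ together with the identity on $\h$, and $i$ is the canonical map $\h_1\oplus\h_2\to U\h_1\ot U\h_2$; both are homomorphisms of Lie superalgebras. Writing $\mathsf t_\g=(-1)^{\hdeg{e_i}}t_\g^{ij}e_i^{(1)}e_j^{(2)}\in\g^{(1)}\ot\g^{(2)}$, the commutation relation for $\cp$ requires $[\BV+\mathsf t_\g,\xi^{(1)}+\xi^{(2)}]=0$ and $[\BV+\mathsf t_\g,\eta]=0$: the $\BV$-parts vanish by hypothesis on the target, $[\mathsf t_\g,\xi^{(1)}+\xi^{(2)}]=0$ is the diagonal $\g$-invariance of $\mathsf t_\g$ from Proposition \ref{prop:oddmetric}, and $\mathsf t_\g$ commutes with the $\h$-generators trivially. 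For $i$ the corresponding check is immediate, as $\BV\ot1$ and $1\ot\BV$ are supported in complementary tensor slots.

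\textbf{The square relation} is the only substantial point. For $i$, expanding $(\BV\ot1+1\ot\BV)^2$ the cross term $\{\BV\ot1,1\ot\BV\}$ vanishes (both summands are odd and supported in complementary factors), leaving $\BV^2\ot1+1\ot\BV^2$, which equals $i$ of the Cartan element of $\h_1\oplus\h_2$ since the structure constants vanish across the summands. For $\cp$, I expand $(\BV+\mathsf t_\g)^2=\BV^2+\{\BV,\mathsf t_\g\}+\mathsf t_\g^2$; since $\BV$ graded-commutes with $e_i^{(1)},e_j^{(2)}$ and $\hdeg{e_i}+\hdeg{e_j}=1$ whenever $t_\g^{ij}\neq0$, one finds $\BV\mathsf t_\g=-\mathsf t_\g\BV$, so $\{\BV,\mathsf t_\g\}=0$. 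As $\BV^2$ is the Cartan element of $\g\oplus\g\oplus\h$, which splits as $\phi_{\g^{(1)}}+\phi_{\g^{(2)}}+\phi_{\h}$, one is reduced to the identity
\[ \phi_{\g^{(1)}}+\phi_{\g^{(2)}}+\mathsf t_\g^2=\text{(diagonal image of }\phi_{\g}), \]
i.e. $\mathsf t_\g^2$ equals the mixed terms in the expansion of $\tfrac16 f^{ijk}(e_i^{(1)}+e_i^{(2)})(e_j^{(1)}+e_j^{(2)})(e_k^{(1)}+e_k^{(2)})$. This is precisely the compatibility verified in Proposition \ref{PROPHopf} when checking that the coproduct $\tilde\cp$ is an algebra map, and it also follows directly from the Drinfeld--Kohno-type relations of Proposition \ref{prop:tproperties} (with $\mathsf t_\g=\tilde t_{12}$). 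I expect this identity to be the main obstacle; everything else is Koszul-sign bookkeeping. Finally all maps raise filtration degree correctly ($\BV$ and $\mathsf t_\g$ both have degree $2$), so they are filtered algebra maps.

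\textbf{Associativity.} Both composites $\cp_1\circ\cp$ and $\cp_2\circ\cp$ are algebra maps $\mathcal H^{\g\oplus\h}\to\mathcal H^{\g\oplus\g\oplus\g\oplus\h}$, so it suffices to compare them on the generators $\xi\in\g$, $\eta\in\h$ and $\BV$. On $\xi$ both yield the totally diagonal $\xi^{(1)}+\xi^{(2)}+\xi^{(3)}$, and on $\eta$ both act as the identity. On $\BV$, computing $\cp_k(\mathsf t_\g)$ by the diagonal rule shows that each composite produces
\[ \BV+\mathsf t_\g^{(12)}+\mathsf t_\g^{(13)}+\mathsf t_\g^{(23)}, \]
where $\mathsf t_\g^{(pq)}$ denotes $\mathsf t_\g$ placed in the $p$-th and $q$-th copies of $\g$; the sum ranges over all unordered pairs of the three copies, so the two composites agree. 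Hence $\cp_1\circ\cp=\cp_2\circ\cp$ on generators and therefore everywhere. This is exactly the coassociativity of $\tilde\cp$ recorded in Proposition \ref{PROPHopf}, and the square commutes.
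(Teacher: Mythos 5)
Your proposal is correct and follows essentially the same route as the paper: the paper likewise reduces well-definedness to the single nontrivial relation $(\cp(\BV))^2=\cp(\BV^2)$ (deferring, as you do, to the computation behind Proposition \ref{PROPHopf}, which is the mixed-terms identity encoded in Proposition \ref{prop:tproperties}), and proves associativity by checking that both legs act as the triple diagonal on $\g$, the identity on $\h$, and send $\BV$ to $\BV$ plus the sum of $(-1)^{\hdeg{e_i}}t_\g^{ij}\,e_i\ot e_j$ over all three pairs of copies of $\g$. Your write-up is more detailed than the paper's two-line proof (in particular the explicit vanishing of $\{\BV,\mathsf t_\g\}$ and the cross terms for $i$), but the method is the same.
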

\begin{proof}
	For the maps $\cp$ and $i$, the only nontrivial identity is $(\cp(\BV))^2 = \cp(\BV^2)$. This is
	a calculation analogous to the one in the proof of Proposition \ref{PROPHopf}.
	
	Both legs of the associativity diagram act as a triple coproduct on $\g$, identity on $\h$ and on $\BV$, they give
	\begin{align*}
	\BV \mapsto &\BV\ot 1 \ot 1 + 1\ot \BV\ot 1 + 1\ot 1 \ot \BV \\&+ \frac 12 (-1)^{\hdeg{e_i}} t_\g^{ij} ( e_i\ot e_j \ot 1 + e_i \ot 1 \ot e_j + 1 \ot e_i \ot e_j  ).
	\end{align*}
\end{proof}
\begin{remark}
	For $\h_1 = \h_2 = 0$, the composition $i\circ \cp$ is the coproduct $\tilde{\cp}$ on $\mathcal H^{\g}$. 	
\end{remark}
Finally, we give a topological interpretation to the maps $\cp$ and $i$.
\begin{proposition}
	Let  $\mathcal{H}^{\g\oplus\g\oplus\h}\to \operatorname{DiffOp}(M_{\Sigma, \{p, p', \dots\}}(G))$ be the quasi-BV structure from Theorem \ref{thm:qBVFR}, with
	the two $\g$ actions corresponding to the two points $p, p'$. Then the $\g\oplus\h$-quasi-BV structure on the surface given by fusion of $p, p'$ into $p''$ is given by the composition 
	\[\mathcal{H}^{\g\oplus\h}\xrightarrow{\cp} \mathcal{H}^{\g\oplus\g\oplus\h} \to \operatorname{DiffOp}(M_{\Sigma, \{p'', \dots\}}(G))\cong \operatorname{DiffOp}(M_{\Sigma, \{p, p', \dots\}}(G)).\]
	
	Similarly, if $\Sigma_1, V_1$ and $\Sigma_2, V_2$ are two surfaces with corresponding $\g^{V_{1,2}}$-quasi-BV structures, then the quasi-BV structure on $(\Sigma_1 \sqcup \Sigma_2, V_1 \sqcup V_2)$ is given by the composition 
	\begin{align*} \mathcal{H}^{\g^{V_1}\oplus\g^{V_2}} &\xrightarrow{i} \mathcal{H}^{\g^{V_1}}\otimes \mathcal{H}^{\g^{V_2}} \\
	&\to \operatorname{DiffOp}(M_{\Sigma_1, V_1}(G)\times M_{\Sigma_2, V_2}(G)) \cong \operatorname{DiffOp}(M_{\Sigma_1\sqcup\Sigma_2, V_1\sqcup V_2}(G)).  \end{align*}
\end{proposition}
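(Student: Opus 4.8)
The plan is to invoke Proposition~\ref{PROPHopf} to reduce both claims to an equality of algebra maps: for the first, $\mathcal H^{\g\oplus\h}\to\operatorname{DiffOp}(M_{\Sigma,\{p'',\dots\}}(G))$, and for the second, $\mathcal H^{\g^{V_1}\oplus\g^{V_2}}\to\operatorname{DiffOp}(M_1\times M_2)$. Since $\mathcal H^{\g\oplus\h}=U(\g\oplus\h)\otimes U(\fld\BV)$ is generated as an algebra by $\g\oplus\h$ and $\BV$ (and similarly in the second case), it suffices to compare the two composites on these generators. Both composites are algebra maps preserving parity and filtration and annihilating constants: the first factor ($\cp$, resp. $i$) is such by Proposition~\ref{PROPFusion}, and the second factor is the quasi-BV structure of Theorem~\ref{thm:qBVFR}; hence by Proposition~\ref{PROPHopf} each composite already defines a genuine $\g\oplus\h$- (resp. $\g^{V_1}\oplus\g^{V_2}$-) quasi-BV structure, and the only remaining task is to identify it with the geometric one.

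For the first statement I would argue as follows. On a generator $\xi\in\g$ the map $\cp$ gives $(\xi,\xi)$, whose image in $\operatorname{DiffOp}$ is $\rho_p(\xi)+\rho_{p'}(\xi)$; this is exactly the gauge action $\rho_{p''}(\xi)$ at the fused vertex, because under fusion the half-edges at $p''$ are the disjoint union $\he(p)\sqcup\he(p')$. The $\h$-generators (the actions at the other marked points) are unchanged. The substantial point is the image of $\BV$: by definition $\cp(\BV)=\BV+(-1)^{\hdeg{e_i}}t^{ij}e_i\otimes e_j$, whose image is the unfused operator $\Delta^\Gamma$ plus a term built from $\rho_p(e_i)\rho_{p'}(e_j)$, and this is to be matched termwise against the fusion formula of Proposition~\ref{prop:oddfusion}. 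Here I would also record that, as in the proof of Theorem~\ref{thm:qBVFR}, the foliation on the fused surface can be chosen so that all rotation numbers are preserved; thus the $\nu$-part of $\Delta^\Gamma$ is untouched by fusion, consistently with the fact that $\cp$ alters $\BV$ only by a term built from $t$ and not from $\nu$.

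For the second statement I would use that $\Pi_{1}(\Sigma_1\sqcup\Sigma_2, V_1\sqcup V_2)$ is the disjoint union of the two fundamental groupoids, so that $\ModSpace{\Sigma_1\sqcup\Sigma_2, V_1\sqcup V_2}{G}\cong\ModSpace{\Sigma_1,V_1}{G}\times\ModSpace{\Sigma_2,V_2}{G}$; choosing a skeleton that is the disjoint union of skeletons of the two pieces, the defining sum~\eqref{eq:qbvdef} splits with no cross terms, since no vertex and no edge is shared. The operator is therefore $\Delta_1\otimes 1+1\otimes\Delta_2$, and each action $\rho_v$ acts on the corresponding tensor factor, which is precisely the image of the generators under $i$ (namely $\BV\mapsto\BV\otimes1+1\otimes\BV$, $\eta_1\mapsto\eta_1\otimes1$, $\eta_2\mapsto1\otimes\eta_2$). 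The relation $\BV^2\mapsto\phi_1\otimes1+1\otimes\phi_2$ is automatic because the odd operators $\Delta_1\otimes1$ and $1\otimes\Delta_2$ anticommute, so their cross terms cancel in the square.

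The step I expect to be the main obstacle is the termwise matching of the fusion term in the first statement. One must track the Koszul signs arising when the odd inclusions $\iota_p(e_i)$ and $\iota_{p'}(e_j)$ are commuted past one another, and reconcile the normalisation of the element $(-1)^{\hdeg{e_i}}t^{ij}e_i\otimes e_j$ appearing in $\cp$ with the coefficient $\tfrac12(-1)^{\hdeg{e_i}}t^{ij}\rho_p(e_i)\rho_{p'}(e_j)$ in Proposition~\ref{prop:oddfusion} (keeping in mind the relation between the tensor and the invariant element of $\bigwedge^2\g$). The identity $(\cp\,\BV)^2=\cp(\BV^2)$ and the associativity square already established in Proposition~\ref{PROPFusion} are exactly the checks that pin these conventions down, so the residual work is the bookkeeping verifying that the geometric and algebraic fusion terms coincide as second-order operators on $G^{\times N}$.
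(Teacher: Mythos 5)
Your proposal is correct and takes essentially the same route as the paper: the paper's entire proof is the two-sentence observation that the extra term in $\cp(\BV)$ is exactly the fusion term of Proposition~\ref{prop:oddfusion}, and that for disjoint unions the quasi-BV operators simply add, which is precisely what your generator-by-generator check via Proposition~\ref{PROPHopf} spells out. The factor-of-$\tfrac12$ mismatch you flag as the main obstacle is real, but it is a normalisation inconsistency internal to the paper itself (the definition of $\cp$ carries no $\tfrac12$ while Proposition~\ref{prop:oddfusion} does, consistently with the appendix convention $\BV^2=\tfrac16 f^{ijk}e_ie_je_k=4\phi$ rather than $\Delta^2=\phi$), so it is absorbed by identifying $\BV$ with $2\Delta$ (equivalently, by inserting $\tfrac12$ into the definition of $\cp$); the paper's own proof does not address this either.
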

\begin{proof}
	The additional term in \eqref{eq:fusionBV} comes from $\cp(\BV) = \BV + \tfrac 12 (-1)^{e_i} t_\g^{ij} e_i \ot e_j$. For the disjoint union of surfaces, the quasi-BV operators are simply added together.
\end{proof}

\printbibliography[heading=bibintoc, title={Bibliography}]

\end{document}